\documentclass[11pt]{amsart} \textwidth=14.5cm \oddsidemargin=1cm
\evensidemargin=1cm

\usepackage{amsmath,wasysym}
\usepackage{amsxtra}
\usepackage{amscd}
\usepackage{amsthm}
\usepackage{amsfonts}
\usepackage{amssymb}
\usepackage{eucal}
\usepackage{graphics, color}
\usepackage{hyperref,mathrsfs}

\input prepictex
\input pictex
\input postpictex

\newtheorem{thm}{Theorem}[section]
\newtheorem{lem}[thm]{Lemma}
\newtheorem{cor}[thm]{Corollary}
\newtheorem{prop}[thm]{Proposition}

\newtheorem{proc}[thm]{Procedure}

\theoremstyle{definition}

\newtheorem{example}[thm]{Example}

\theoremstyle{remark}
\newtheorem{rem}[thm]{Remark}

\numberwithin{equation}{section}

\begin{document}

\newcommand{\thmref}[1]{Theorem~\ref{#1}}
\newcommand{\secref}[1]{Section~\ref{#1}}
\newcommand{\lemref}[1]{Lemma~\ref{#1}}
\newcommand{\propref}[1]{Proposition~\ref{#1}}
\newcommand{\corref}[1]{Corollary~\ref{#1}}
\newcommand{\remref}[1]{Remark~\ref{#1}}
\newcommand{\eqnref}[1]{(\ref{#1})}
\newcommand{\exref}[1]{Example~\ref{#1}}

\newcommand{\nc}{\newcommand}
\nc{\Z}{{\mathbb Z}}
\nc{\hZ}{{\underline{\mathbb Z}}}
\nc{\C}{{\mathbb C}}
\nc{\N}{{\mathbb N}}
\nc{\F}{{\mf F}}
\nc{\Q}{\mathbb {Q}}
\nc{\la}{\lambda}
\nc{\ep}{\delta}
\nc{\h}{\mathfrak h}
\nc{\n}{\mf n}
\nc{\A}{{\mf a}}
\nc{\G}{{\mathfrak g}}
\nc{\SG}{\overline{\mathfrak g}}
\nc{\DG}{\widetilde{\mathfrak g}}
\nc{\D}{\mc D}
\nc{\Li}{{\mc L}}
\nc{\La}{\Lambda}
\nc{\is}{{\mathbf i}}
\nc{\V}{\mf V}
\nc{\bi}{\bibitem}
\nc{\NS}{\mf N}
\nc{\dt}{\mathord{\hbox{${\frac{d}{d t}}$}}}
\nc{\E}{\EE}
\nc{\ba}{\tilde{\pa}}
\nc{\half}{\frac{1}{2}}
\nc{\mc}{\mathcal}
\nc{\mf}{\mathfrak} \nc{\hf}{\frac{1}{2}}
\nc{\hgl}{\widehat{\mathfrak{gl}}}
\nc{\gl}{{\mathfrak{gl}}}
\nc{\hz}{\hf+\Z}
\nc{\dinfty}{{\infty\vert\infty}}
\nc{\SLa}{\overline{\Lambda}}
\nc{\SF}{\overline{\mathfrak F}}
\nc{\SP}{\overline{\mathcal P}}
\nc{\U}{\mathfrak u}
\nc{\SU}{\overline{\mathfrak u}}
\nc{\ov}{\overline}
\nc{\wt}{\widetilde}
\nc{\wh}{\widehat}
\nc{\sL}{\ov{\mf{l}}}
\nc{\sP}{\ov{\mf{p}}}
\nc{\osp}{\mf{osp}}
\nc{\spo}{\mf{spo}}
\nc{\hosp}{\widehat{\mf{osp}}}
\nc{\hspo}{\widehat{\mf{spo}}}
\nc{\hh}{\widehat{\mf{h}}}
\nc{\even}{{\bar 0}}
\nc{\odd}{{\bar 1}}
\nc{\mscr}{\mathscr}

\newcommand{\blue}[1]{{\color{blue}#1}}
\newcommand{\red}[1]{{\color{red}#1}}
\newcommand{\green}[1]{{\color{green}#1}}
\newcommand{\white}[1]{{\color{white}#1}}


\newcommand{\aaf}{\mathfrak a}
\newcommand{\bb}{\mathfrak b}
\newcommand{\cc}{\mathfrak c}
\newcommand{\qq}{\mathfrak q}
\newcommand{\qn}{\mathfrak q (n)}
\newcommand{\UU}{\bold U}
\newcommand{\VV}{\mathbb V}
\newcommand{\WW}{\mathbb W}
\nc{\TT}{\mathbb T}
\nc{\EE}{\mathbb E}
\nc{\FF}{\mathbb F}
\nc{\KK}{\mathbb K}
\nc{\rl}{\texttt{r,l}}

 \advance\headheight by 2pt

\title[Quantum group of type $A$ and queer Lie superalgebra]
{Quantum group of type $A$ and representations of queer Lie superalgebra}

\author[Chen]{Chih-Whi Chen}
\address{Department of Mathematics, National Taiwan University, Taipei 10617, Taiwan}
\email{d00221002@ntu.edu.tw}

\author[Cheng]{Shun-Jen Cheng$^\dagger$}
\thanks{$^\dagger$Partially supported by a MOST and an Academia Sinica Investigator grant}
\address{Institute of Mathematics, Academia Sinica, Taipei,
Taiwan 10617} \email{chengsj@math.sinica.edu.tw}

\begin{abstract}
We establish a maximal parabolic version of the Kazhdan-Lusztig conjecture \cite[Conjecture 5.10]{CKW} for the BGG category $\mc{O}_{k,\zeta}$ of $\mf{q}(n)$-modules of ``$\pm \zeta$-weights'', where $k\leq n$ and $\zeta\in\C\setminus\hf\Z$. As a consequence, the irreducible characters of these $\mf q(n)$-modules in this maximal parabolic category are given by the Kazhdan-Lusztig polynomials of type $A$ Lie algebras. As an application, closed character formulas for a class of $\mf q(n)$-modules resembling polynomial and Kostant modules of the general linear Lie superalgebras are obtained.
\end{abstract}

\subjclass[2010]{17B67}

\maketitle

\section{Introduction}

Characters for certain classes of finite-dimensional irreducible modules over the queer Lie superalgebra $\mf{q}(n)$ were obtained in the classical works \cite{Pe, Sv2}.  The character problem of an arbitrary finite-dimensional irreducible $\mf{q}(n)$-module was then first solved by Penkov and Serganova \cite{PS1, PS2}. They provided an algorithm for computing the coefficient $a_{\la\mu}$ of the character of the irreducible $\mf{q}(n)$-module $L(\mu)$ in the expansion of the character of the associated Euler characteristic $E(\la)$ for given dominant weights $\la, \mu$.

In \cite{Br2} Brundan developed a rather different approach to computing the coefficient $a_{\la\mu}$ for integer dominant weights $\la,\mu$. Let $\mathbb{F}^n$ be the $\emph{n}$th exterior power of the natural representation of the type B quantum group of infinite rank (cf. \cite{JMO}). It was proved that the transition matrix $(a_{\la\mu})$, for $\la$ and $\mu$ dominant integer weights, is given by the transpose of the transition matrix between canonical and the natural monomial bases of $\mathbb{F}^n$ at $q=1$. This gives all irreducible characters of finite-dimensional integer weight modules in terms of a combinatorial algorithm for computing the canonical basis of $\mathbb F^n$. A new interpretation of the irreducible characters of finite-dimensional half-integer weight modules in the same spirit of Lusztig canonical basis of quantum groups was given in \cite{CK} and \cite{CKW} as well.

While finite-dimensional representations of the queer Lie superalgebra $\mf q(n)$ are now fairly well understood, their infinite-dimensional analogues have not been studied much in the literature. Except for $n=2$ and some special cases, e.g.~\cite{FM, Ch}, irreducible characters of infinite-dimensional modules in the BGG category remain largely unknown (see, e.g., the survey article \cite{GG}).

The Brundan-Kazhdan-Lusztig conjecture \cite[Conjecture 4.32]{Br1} for the BGG category of integer weight $\mf{gl}(m|n)$-modules was proved by Lam, Wang and the second author in \cite{CLW} (see also \cite{BLW}). In fact, in \cite{CLW, BLW} irreducible character problem for arbitrary Borel subalgebras was settled; see also \cite{CL} for algorithms.  Furthermore, in \cite{CMW}, by means of twisting functors and parabolic induction functors, Mazorchuk, Wang and the second author reduced the irreducible character problem for $\gl(m|n)$ of an arbitrary highest weight to that of an integer highest weight, for which the Brundan-Kazhdan-Lusztig conjecture is then applicable. This gives a complete solution of the irreducible character problem for the full BGG category.

A similar reduction is established for $\mf{q}(n)$ by the first author in \cite{Ch}. As a consequence, the problem of computing the characters of the irreducible modules of arbitrary weights in the BGG category $\mc{O}_{n}$ for $\mf{q}(n)$ is reduced to the irreducible character problem in the following three categories: (i) the BGG category $\mc{O}_{n,\mathbb{Z}}$ of $\mf{q}(n)$-modules of integer weights, see, e.g., \cite{Br2}. (ii)  the BGG category $\mc{O}_{n,\frac{1}{2}+\mathbb{Z}}$ of $\mf{q}(n)$-modules of half-integer weights, see, e.g., \cite{CK}, \cite{CKW}. (iii) ($\zeta\notin \mathbb{Z}/2$ and $k\in \{0,1,\ldots ,n\}$) the BGG category $\mc{O}_{n,{\zeta}^{k}}$ of $\mf{q}(n)$-modules of  "$\pm \zeta$-weights", see, \cite{CKW} or Section \ref{sec:par:cat}. {\em In the main body of the present paper, we shall use the notation $\mc{O}_{k,\zeta}$ to denote the category $\mc{O}_{n,{\zeta}^{k}}$, as $n$ will be fixed throughout}.

Kwon, Wang and the second author formulated a Kazhdan-Lusztig type conjecture for the BGG category in (iii) (\cite[Conjecture 5.10]{CKW}) above, analogous to Brundan's conjecture for the category (i) in \cite[Section 4.8]{Br2}. In the same paper, the authors also establish some connections between the canonical bases of types A,B,C. Their investigation seems to indicate connections between certain modules over $\mf{q}(n)$ and modules over the general linear Lie superalgebra $\mf{gl}(k|n-k)$ for various $k\leq n$.

In particular, for each $k\leq n$, one has a bijection between the highest weights of the irreducible objects in the category $\mc{O}_{n,{\zeta}^{k}}$ and those of the BGG category
of  integer-weight modules for $\mf{gl}(k|n-k)$, that is compatible with the linkage in both categories (see, e.g., \cite{Ch}). In fact, in \cite{Ch} it was proved that blocks of atypicality degree one of a certain maximal parabolic subcategory $\mc F_{k,\zeta}$ of $\mc{O}_{n,{\zeta}^{k}}$ are equivalent to blocks of atypicality degree one of the category of finite-dimensional modules over $\mf{gl}(k|n-k)$.

In the present paper, we study the Kazhdan-Lusztig conjecture for the BGG category $\mc{O}_{n,{\zeta}^{k}}$, formulated in \cite[Conjecture 5.10]{CKW}, which states that the irreducible characters for modules in $\mc{O}_{n,{\zeta}^{k}}$ are determined by the very same Brundan-Kazhdan-Lusztig polynomials as those for the BGG category of the general linear Lie superalgebra $\gl(k|n-k)$ of \cite{Br1}. The main result of the present paper is to (formulate and) prove a parabolic version of that conjecture for the maximal parabolic subcategory $\mc F_{k,\zeta}$ (see Section \ref{FormulationOfPKL}). We wish to point out that the irreducible $\mf q(n)$-modules in $\mc F_{k,\zeta}$ are almost always infinite-dimensional and the character formulas we have obtained in this paper are new.

The paper is organized as follows.  In Section \ref{SectionQuantumgroups}, we recall the quantum group of type A and the construction of the Fock space $\mc{E}^{m|n}$. We review the canonical and the dual canonical bases in (a topological completion of) $\mc{E}^{m|n}$, along with Brundan's algorithm for computing canonical basis.
Section \ref{SectionRepnOfLieSuperalg} is devoted to the study of representations of the queer Lie superalgebra $\mf{q}(n)$. Certain parabolic  subcategories of $\mc O_{k,\zeta}$ of $\mf{q}(n)$-modules are introduced and characterized.
In Section \ref{SectionTiltingModules} we study in detail the tilting modules in these parabolic subcategory $\mc O_{k,\zeta}$. The parabolic version of the Kazhdan-Lusztig conjecture for the maximal parabolic subcategory $\mc F_{k,\zeta}$ is then formulated precisely in Section \ref{FormulationOfPKL}. We establish a ``queer'' version of Serganova's fundamental lemma \cite[Theorem 5.5]{Ser} in Section \ref{SectionSerganovasFunLem}. This lemma is then used to prove the Kazhdan-Lusztig conjecture for $\mc F_{k,\zeta}$ in Section \ref{ProofOfMainThm}. Our proof here follows the idea of and is inspired by the proof of the main theorem in \cite{Br1}. Finally, we establish a closed Kac-Wakimoto type character formula for a class of $\mf q(n)$-modules in $\mc F_{k,\zeta}$ resembling ``Kostant modules'' for the general linear Lie superalgebra. For those $\mf{q}(n)$-modules resembling polynomial representations of the general linear Lie superalgebra we obtain a Sergeev-Pragacz type character formula as well. This is accomplished in Section \ref{sec:KW:formula}.

\subsection*{Acknowledgments} The results of the present paper were announced by the second author in the conference {\em Categorical Representation Theory and Combinatorics} held in KIAS in December 2015. In the same conference Brundan announced that he and Davidson can establish \cite[Conjecture 5.10]{CKW} in its full generality based on uniqueness of tensor product categorification in the same spirit as \cite{BLW}.

We have been informed by Shunsuke Tsuchioka that his computer calculations show that the conjectures for the irreducible characters of integer and half-integer weights in the full BGG category formulated respectively in \cite{Br2} and \cite{CKW} require corrections. We are indebted to him for kindly sharing his computations with us.

\subsection*{Notation} \label{SectionNotations} We use $\mathbb{N}$, $\mathbb{Z}$ and $\mathbb{Z}_{\geq 0}$ to denote the sets of natural numbers, integers, and non-negative integers, respectively. Here and below we let $m,n \in \mathbb{Z}_{\geq 0}$ and set \begin{align*} I(m|n):= \{ -m, -m+1, \ldots  ,-1\} \cup \{1, 2,\ldots ,n \}.\end{align*} Let $\mathbb{Z}^{m|n}$ be the set of all functions $f: I(m|n) \rightarrow \mathbb{Z}$.

For $p\in\N$, the symmetric group on $p$ letters is denoted by $\mf{S}_{p}$. Let $\mf{S}_{m|n}: = \mf{S}_{m} \times \mf{S}_n$. Note  that $\mf{S}_{m|n}$ acts on the right of $\mathbb{Z}^{m|n}$ by composition of functions.

Throughout the paper, we fix a complex number $\zeta \not\in\hf\Z$ which will be used from Section \ref{SubsectionZgradations} on.

\section{Quantum groups and combinatorial preliminaries} \label{SectionQuantumgroups}

In this section we recall the quantum group of type A of infinite rank. We refer to \cite[Section 2-c]{Br1} or \cite[Section 2]{CLW} for more details.

\subsection{Quantum group of type A}
Let $\UU := \UU_{q}(\mf{gl}_{\infty})$ be the quantum group of type A of infinite rank. This is the $\mathbb{Q}(q)$-algebra generated by $\{E_a, F_a, K_a, K_a^{-1}\}_{a\in \mathbb{Z}}$, subject to the relations \[K_aK_a^{-1} = K_a^{-1}K_a =1,\ \ K_aK_b = K_bK_a,\]
\[K_aE_bK_a^{-1} = q^{\delta_{a,b} - \delta_{a,b+1} }E_b, \ \  K_aF_bK_a^{-1} = q^{\delta_{a,b+1} - \delta_{a,b}  }F_b,\]
\[E_aF_b -F_bE_a = \delta_{a,b} \frac{K_{a,a+1}- K_{a+1,a}}{q-q^{-1}},\]
\[E_a E_b = E_bE_a, \hskip 106pt \text{ if $|a-b|>1$}, \]
\[E_a^2 E_b + E_bE_a^2 =(q+q^{-1})E_aE_bE_a, \hskip 10pt \text{ if $|a-b|=1$},\]
\[F_a F_b = F_bF_a, \hskip 111pt \text{ if $|a-b|>1$},\]
\[F_a^2 F_b + F_bF_a^2 =(q+q^{-1})F_aF_bF_a, \hskip 17pt \text{ if $|a-b|=1$}.\]
 Here and below $K_{a,b} := K_a K_b^{-1}$ for $a\neq b\in \mathbb{Z}$.

$\UU$ is a Hopf algebra with commultiplication $\Delta: \UU \rightarrow \UU \otimes \UU$  defined by
\[\Delta(E_a) = 1\otimes E_a + E_a \otimes K_{a+1,a},\]
\[\Delta(F_a) =  F_a \otimes 1 +K_{a,a+1}\otimes F_a,\]
\[\Delta(K_a) = K_a\otimes K_a,\]
for $a\in \mathbb{Z}$.

\subsection{Fock space $\mc{E}^{m|n}$} \label{SectionTheFockSpace}
Let $\VV$ be the natural $\UU$-module with basis $\{v_{a}\}_{a\in \mathbb{Z}}$  and let $\WW$ be its restricted dual with basis $\{w_a\}_{a\in \mathbb{Z}}$ normalized by $\langle w_a,v_b\rangle = (-q)^{-a}\delta_{a,b}$, for $a,b \in\ \mathbb{Z}$. The actions of $\UU$ on $\VV$ and $\WW$ are respectively given by
\begin{align*}
K_a v_b =q^{\delta_{a,b}}v_b, \ \ E_a v_b =\delta_{a+1,b} v_a, \ \ F_a v_b =\delta_{a,b}v_{a+1},\\
K_a w_b =q^{-\delta_{a,b}}w_b, \ \ E_a w_b =\delta_{a,b}w_{a+1}, \ \ F_a w_b =\delta_{a+1,b} w_a.
\end{align*}

For $m,n\in \mathbb{Z}_{\geq 0}$, the tensor space $ \TT^{m|n}:= \VV^{\otimes m}\otimes \WW^{\otimes n}$ can be viewed as a module over $\UU$ via the comultiplication $\Delta$. For $f\in \mathbb{Z}^{m|n}$, we let \[M_f:= v_{f(-m)}\otimes  v_{f(-m+1)} \otimes \cdots \otimes v_{f(-1)} \otimes w_{f(1)} \otimes w_{f(2)}  \otimes  w_{f(n)} \in \TT^{m|n}.\] The set $\{M_f\}_{f\in \mathbb{Z}^{m|n}}$ is referred to as the {\em standard monomial basis} for $\TT^{m|n}$.

Let $\mf{S}_{m}$ be the symmetric group on the letters in $I(m|0)$ with the set of generators $\{s_i:=(i \ \ i+1)|-m \leq i\leq -2\} \subseteq \mf{S}_{m}$. Recall that the {\em Iwahori-Hecke algebra} $\mathcal{H}_{m}$ associated to $\mf{S}_{m}$ is the associative $\mathbb{Q}(q)$-algebra generated by $H_i$, $-m\le i\le -2$, subject to the relations
 \[(H_i -q^{-1})(H_i + q) =0,\]
\[H_iH_{i+1}H_i = H_{i+1}H_iH_{i+1},\] \[H_iH_j=H_jH_i, \text{ for } |i-j|>1. \]
Denote the longest element in $\mf{S}_{m}$ by $\omega_{0}^{(m)}$. For each $\sigma\in \mathfrak{S}_{m}$, we have the corresponding element $H_{\sigma}: = H_{i_1}H_{i_2} \cdots H_{i_r}$ for any reduced expression $\sigma = s_{i_1}s_{i_2}\cdots s_{i_r}$. Recall that there is a unique antilinear ($q\rightarrow q^{-1}$) automorphism $\bar{\,}: \mc{H}_{m} \rightarrow \mc{H}_{m}$ such that $\overline{H_{\sigma}} = H^{-1}_{\sigma^{-1}}$, for all $\sigma \in \mf{S}_m$ (see, e.g., \cite[Section 2-e]{Br1}).

We denote by $\preceq_{\mf{a_m}}$ the classical Bruhat ordering on the weight lattice $\mathbb{Z}^m$ of $\mf{gl}(m)$. For $i\in I(m|n)$, let $d_i \in \mathbb{Z}^{m|n}$ be the function $j\mapsto -\text{sgn}(i)\delta_{ij}$. Recall the {\em super Bruhat ordering} $\preceq$ on $\mathbb{Z}^{m|n}$ for Lie superalgebra $\mf{gl}(m|n)$ defined in \cite[Section 2-b]{Br1} as follows.

Let $P$ be the free abelian group with basis $\{\epsilon_a\}_{a\in \mathbb{Z}}$. Let $\leq$ denote the partial ordering of weights on $P$, i.e., $f\leq g$ if and only if $f-g \in \sum_i \mathbb{Z}_{\geq 0}(\epsilon_i - \epsilon_{i+1})$. Let $\text{wt}_r^{\epsilon}: \mathbb{Z}^{m|n} \rightarrow P$ be the $\epsilon$-weight function defined by
\begin{align}
\text{wt}_r^{\epsilon}(f) := \sum_{r\leq i}-\text{sgn}(i)\epsilon_{f(i)}, \text{  for } f\in \mathbb{Z}^{m|n},\ \ r\in I(m|n).
\end{align}
The  super Bruhat ordering $\preceq$ on $\mathbb{Z}^{m|n}$ is defined by $f\preceq g$, if $\text{wt}^\epsilon_r f \leq \text{wt}^\epsilon_r g$, for all $r\in I(m|n)$, and $\text{wt}^\epsilon_{-m}f =\text{wt}^\epsilon_{-m} g$ (\cite[Section 2-b]{Br1}).

For $f\in \mathbb{Z}^{m|n}$, the {\em degree of atypicality} of $f$ is denoted by $\sharp f$ (see, e.g., \cite[(2.3)]{Br1}).
We say $f$ is {\em typical} if $\sharp f =0$; otherwise $f$ is {\em atypical}. For $f,g \in \mathbb{Z}^{m|n}$, we have that $f \succeq g$ implies $\sharp f =\sharp g$.

Recall that $\TT^{m|0}= \VV^{\otimes m}$ admits a $\UU$-$\mathcal{H}_{m}$-bimodule structure \cite{Jim}. Namely, on $\TT^{m|0}$ the Hecke algebra $\mc H_m$ acts as follows:
\begin{align} \label{HeckeAlgebaAction}
M_f H_i = \left\{ \begin{array}{lll}  M_{fs_i}, \text{ if } f \prec_{\mf{a_m}} fs_i,\\
q^{-1}M_{f}, \text{ if } f = fs_i,\\
M_{fs_i} -(q-q^{-1})M_f, \text{ if } f \succ_{\mf{a_m}} fs_i,
\end{array} \right.
\end{align}
for all $-m\leq i\leq -2$ and $ f\in \Z^{m|0}$. Similarly, we can define a $\UU$-$\mathcal{H}_{n}$-bimodule structure on  $\TT^{0|n} = \WW^{\otimes n}$.

For $p\in \N$, let
\begin{align*}
   H_0(p) : = \sum_{\sigma\in \mf{S}_{p}} (-q)^{\ell(\sigma) - \ell(\omega_0^{(p)})} H_{\sigma}.
\end{align*}
Then $H_0(p)$ is a bar-invariant element in $\mc H_p$ (see, e.g., \cite[Lemma 3.2]{Br1}).

It is proved in \cite[Propositions 1.1 and 1.2]{KMS} that $\TT^{m|0} =\text{Ker}H_0(m)|_{\TT^{m|0}} \oplus \TT^{m|0}H_0(m)|_{\TT^{m|0}}$ 
and $\text{Ker}H_0(m)|_{\TT^{m|0}} = \sum_{i=-m}^{-2} \text{Ker}(H_i-q^{-1})|_{\TT^{m|0}}$. Similarly, $\TT^{0|n}$ and $\text{Ker}H_0(n)|_{\TT^{0|n}}$ have analogous decompositions.

As a conclusion, $\TT^{m|n}$ admits a $\UU$-$(\mathcal{H}_{m}\otimes \mathcal{H}_{n})$ bimodule structure
(see, e.g., \cite[Section 2-e]{Br1}) with  $\TT^{m|n}= \text{Ker}H_0 \oplus \TT^{m|n} H_0$ and $\text{Ker}H_0 = \sum_i \text{Ker}(H_i-q^{-1})$, where the summation is over $i\in I(m|n)\setminus\{-1,n\}$ and $H_0:= H_{0}(m) H_{0}(n) \in \mathcal{H}_{m}\otimes \mathcal{H}_{n}$. We define the {\em Fock space} $\mc{E}^{m|n}:= \TT^{m|n} H_0$.

We can identify $\mc{E}^{m|n}$ with the {\em $q$-wedge space} $\wedge^m\VV\otimes\wedge^{n}\WW$ (see, e.g., \cite[Section 2.4]{CLW}). Let
\begin{align*}
\mathbb{Z}^{m|n}_+:= \{f\in \mathbb{Z}^{m|n}| f(-m)>f(-m+1)> \cdots  > f(-1),\ \  f(1)<f(2)< \cdots  < f(n) \}.\end{align*}
From \eqref{HeckeAlgebaAction}, it follows that $\{M_fH_0\}_{f\in \mathbb{Z}_+^{m|n}}$ forms a $\mathbb{Q}(q)$-basis for $\mc{E}^{m|n}$ and the following bijection from $\mc{E}^{m|n}$ to $\wedge^m\VV\otimes\wedge^{n}\WW$
\begin{align*}
M_fH_0 \mapsto  v_{f(-m)} \wedge \ldots \wedge v_{f(-1)}\otimes w_{f(1)} \wedge w_{f(2)} \wedge \ldots \wedge w_{f(n)}, \text{ for }f\in \mathbb{Z}^{m|n},
\end{align*}
gives an isomorphism of $\UU$-modules. For each $f\in \mathbb{Z}_+^{m|n}$, we define $K_f : = M_fH_0 \in \mc{E}^{m|n}$. We call $\{K_f\}_{\mathbb{Z}_+^{m|n}}$ the {\em standard monomial basis} for $\mc{E}^{m|n}$.

\subsection{Canonical and dual canonical bases of $\mc{E}^{m|n}$} \label{SectionBases}
We let $\widehat{\TT}^{m|n}$ and $\widehat{\mc{E}}^{m|n}$ denote certain topological completions of $\TT^{m|n}$ and ${\mc{E}}^{m|n}$, respectively, see \cite[Section 2-d]{Br1} for precise definition.
According to \cite[Theorem 2.14]{Br1}  $\widehat{\TT}^{m|n}$ admits a continuous, anti-linear bar-involution $\bar{\,}: \widehat{\TT}^{m|n} \rightarrow \widehat{\TT}^{m|n}$ such that $\overline{XuH} =\overline{X}\overline{u}\overline{H}$, for all $X\in \UU$, $u\in \widehat{\TT}^{m|n}$, $H\in \mathcal{H}_{m}\otimes \mathcal{H}_{n}$, and furthermore $\overline{M_f} = M_f$, for $f\in \mathbb{Z}^{m|n}$ with $f(-m)\leq \cdots \leq f(-1)$, $f(1) \geq \cdots \geq  f(n)$, $f(i) \neq f(j)$, for all $-m\leq i < 0 < j \leq n$.

\begin{thm} \label{KL-Lemma} \emph{(} \cite[Theorem 3.6]{Br1} \emph{)} There exist unique bar-invariant topological bases $\{U_f\}_{f\in \mathbb{Z}_+^{m|n}}, \{L_f\}_{f\in \mathbb{Z}_+^{m|n}}$ for $\widehat{\mc{E}}^{m|n}$ such that
\begin{align*}
U_f = K_f +\sum_{g\prec f} u_{g,f}(q)K_g, \ \ L_f = K_f +\sum_{g\prec f} \ell_{g,f}(q)K_g,
\end{align*}
where summation is over $g \in \mathbb{Z}^{m|n}_+$, $u_{g,f}(q)\in q\mathbb{Z}[q]$, and $\ell_{g,f}(q)\in q^{-1}\mathbb{Z}[q^{-1}]$.
\end{thm}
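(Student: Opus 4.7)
The plan is to adapt Lusztig's classical existence/uniqueness argument for canonical bases to the topological completion $\widehat{\mc E}^{m|n}$, exactly as Brundan does in the quoted source. The argument proceeds in two stages: (i) show the bar involution acts in a suitably unitriangular fashion on the standard monomials $\{K_f\}_{f\in\Z_+^{m|n}}$; (ii) feed this into a formal Lusztig-type lemma to construct $U_f$ and $L_f$.

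For (i), I would first identify a distinguished element $K_{f_0}$ in each class that is manifestly bar-invariant: the theorem of Brundan recalled above yields $\overline{M_g}=M_g$ for $g$ with $g(-m)\le\cdots\le g(-1)$, $g(1)\ge\cdots\ge g(n)$, $g(i)\ne g(j)$ for $i<0<j$. From such a $g$ one produces the corresponding $K_f$ (for $f\in\Z_+^{m|n}$ obtained by reordering) by acting on $M_g H_0$ with a suitable product of $H_i^{\pm 1}$'s, and then one uses the Hecke relations \eqref{HeckeAlgebaAction} together with $\overline{H_\sigma}=H_{\sigma^{-1}}^{-1}$ to write $\overline{K_f}$ as a $\Z[q,q^{-1}]$-linear combination of $K_g$'s. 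The crucial point is that each intermediate step of the Hecke action either preserves or lowers the ordering on the $\mf a_m$- and $\mf a_n$-Bruhat orders on $\Z^{m|0}\times\Z^{0|n}$; under the projection $\TT^{m|n}\to\mc E^{m|n}$ these combine with the $\epsilon$-weight invariants $\mathrm{wt}_r^\epsilon$ to yield the super Bruhat ordering $\preceq$. The outcome is an expansion
\[
\overline{K_f}=K_f+\sum_{g\prec f} r_{g,f}(q)\,K_g, \qquad r_{g,f}(q)\in\Z[q,q^{-1}],
\]
in $\widehat{\mc E}^{m|n}$, with $\sharp g=\sharp f$ and $\mathrm{wt}_{-m}^\epsilon g=\mathrm{wt}_{-m}^\epsilon f$ for every $g$ appearing.

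For (ii), given such a triangular bar involution I would apply Lusztig's lemma in its standard form. Existence is by downward induction along $\prec$: assume inductively that $U_f$ has been constructed modulo $K_g$'s lying strictly below some fixed level; the coefficient of $K_g$ at the next level is some $c(q)\in\Z[q,q^{-1}]$, and there is a unique way to adjust by a bar-invariant correction so the residual coefficient lies in $q\Z[q]$. (For $L_f$ one uses $q^{-1}\Z[q^{-1}]$ instead.) Uniqueness is immediate: the difference of two candidates is a bar-invariant element whose $K_g$-coefficients all lie in $q\Z[q]$ (resp.\ $q^{-1}\Z[q^{-1}]$), hence must vanish coefficient-by-coefficient.

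The main obstacle is carrying this inductive procedure through in the \emph{topological} completion rather than in $\mc E^{m|n}$ itself: the expansion of $\overline{K_f}$ can have infinitely many terms, so at each stage one must check that the accumulated corrections converge in the topology of $\widehat{\mc E}^{m|n}$. The control comes from two finiteness features: the super Bruhat order preserves the atypicality degree $\sharp f$ and the total $\epsilon$-weight, and, once those invariants are fixed, only finitely many $g\in\Z_+^{m|n}$ with $g\preceq f$ have coefficients of bounded $q$-degree. Verifying this finiteness in the form required by the completion is the technical heart of the proof; with it in hand, both existence and uniqueness of $\{U_f\}$ and $\{L_f\}$ follow formally.
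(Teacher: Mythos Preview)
The paper does not actually prove this theorem: it is quoted from \cite[Theorem~3.6]{Br1} and stated without proof, so there is no ``paper's own proof'' to compare against. Your outline is the standard Lusztig-lemma argument (unitriangularity of the bar involution on the $K_f$'s, then inductive construction with convergence in the completion), and this is indeed the approach Brundan takes in the cited source; in the context of the present paper, however, a simple citation suffices.
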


The topological bases $\{U_f\}_{f\in \mathbb{Z}_+^{m|n}}$ and $\{L_f\}_{f\in \mathbb{Z}_+^{m|n}}$ are respectively referred to as {\em canonical basis} and {\em dual canonical basis}  of $\widehat{\mc{E}}^{m|n}$ (see, e.g., \cite[Section 3-b]{Br1}). The polynomials $u_{g,f}(q)$, $\ell_{g,f}(q)$ can be computed combinatorially \cite[Corollary 3.39]{Br1}.

\subsection{Procedure for canonical basis} \label{CombinatorialSetup} We conclude this section with a review of \cite[Procedure 3.20]{Br1} for constructing canonical basis elements $U_f$, which indeed lie in $\mc E^{m|n}$. This will be used for construction of certain tilting modules of $\mf{q}(n)$ in Section \ref{SectionTiltingModules}.

\begin{proc} \label{Br1Procedure} \emph{(}\cite[Procedure 3.20]{Br1}\emph{)} \emph{ Assume that $f\in \mathbb{Z}_+^{m|n}$ with $\sharp f >0$. Define $h\in \mathbb{Z}_+^{m|n}$, $a\in \mathbb{Z}$ and $\widehat{X}_a, \widehat{Y}_a\in \{E_a,F_a\}$, by the following instructions below starting with step (I).}\end{proc}

{\bf Step (I)} Let $-m\leq i\leq -1$ be the largest number such that $f(i) = f(j)$ for some $1\leq j \leq n$. Go to step (II).

{\bf Step (II)} If $i\neq -1$ and $f(i)-1 =f(i+1)$, replace $i$ by $i+1$ and repeat step (II). Otherwise, go to step (III).

{\bf Step (III)} If $f(i)-1 = f(j)$ for some $1\leq j\leq n$, go to step (II*). Otherwise, set $\widehat{X}_a := F_{f(i)- 1}$, $\widehat{Y}_a:=E_{f(i)-1}$ and $h:= f- d_i$.

{\bf Step (II*)} If $j\neq 1$ and $f(j) -1 =f(j-1)$, replace $j$ by $j-1$ and repeat step (II*). Otherwise, go to step (III*).

{\bf Step (III*)}  If $f(j)-1 = f(i)$ for some $-m\leq i\leq -1$, go to step (II). Otherwise, set $\widehat{X}_a := E_{f(j)-1}$, $\widehat{Y}_a:=F_{f(j)-1}$ and $h:= f+ d_j$.

\vskip 8pt

After finitely many steps, the procedure reduces $f$ to a typical $g\in \mathbb{Z}_+^{m|n}$, namely, $U_f=\widehat{X}_{b_1}\widehat{X}_{b_2}\cdots \widehat{X}_{b_{\ell}}U_g$ for some $b_1,b_2,\ldots ,b_{\ell} \in \mathbb{Z}$ depending on $g$. Since $U_g = K_g$, we have the following lemma (cf. \cite[Lemma 3.19, 3.21]{Br1}).

\begin{lem} \label{LemmaForProcedure} Let $f$, $h$, $a\in \mathbb{Z}$, $\widehat{X}_a$ and $\widehat{Y}_a$ be given as above.
If $\sharp f =\sharp h$ then $\widehat{X}_a U_h =U_f, \widehat{Y}_a\widehat{X}_aU_h = U_h$ and $\widehat{X}_aK_h = K_f$. If $\sharp f  =\sharp h +1$ then we have  $\widehat{X}_a U_h =U_f,\ \ \widehat{Y}_a\widehat{X}_aU_h = (q+q^{-1})U_h$ and $\widehat{X}_aK_h = K_f +qK_{f-d_i+d_j}$, for some $-m\leq i<0 < j\leq n$, $f(i) = f(j)$ such that $f- d_i+d_j \in \mathbb{Z}^{m|n}_+$.
\end{lem}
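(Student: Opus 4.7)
The plan is to reduce the three identities to explicit calculations in the Fock space $\mc E^{m|n}$ using the comultiplication formula for $\Delta$, and then to upgrade from standard monomials to canonical basis elements via the triangular characterization in Theorem~\ref{KL-Lemma} and the bar-involution. First I would unwind Procedure~\ref{Br1Procedure} to make $h$ and $\widehat{X}_a$ concrete: in step (III), $h = f - d_i$ and $\widehat{X}_a = F_{f(i)-1}$, so $h$ differs from $f$ only at the negative position $i$ where $h(i) = f(i) - 1$, and the step-(II) loop guarantees that $h(i)$ stays strictly between its neighbours so that $h \in \mathbb{Z}^{m|n}_+$. Step (III*) is symmetric, with $h = f + d_j$ and $\widehat{X}_a = E_{f(j)-1}$.

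The core computation is then $\widehat{X}_a K_h$, obtained by iterating $\Delta(F_a) = F_a \otimes 1 + K_{a,a+1} \otimes F_a$ (or the corresponding $\Delta(E_a)$) through the $m+n$ tensor factors of $\VV^{\otimes m} \otimes \WW^{\otimes n}$. For the step-(III) case only two tensor slots can possibly contribute: the $v$-slot at position $i$, sending $v_{h(i)}$ to $v_{f(i)}$ and producing $K_f$, and, in the atypicality-decreasing case, a $w$-slot at some $j > 0$ with $h(j) = f(i) - 1$, producing $K_{f - d_i + d_j}$ scaled by a single factor of $q$ coming from the accumulated $K_{c,c+1}$'s to the left of that slot. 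All other tensor positions give zero either because the generator acts trivially there or because the resulting wedge vanishes in $\wedge^m \VV \otimes \wedge^n \WW$. The combinatorial choices made in steps (II) and (II*) ensure that these two alternatives align precisely with $\sharp f = \sharp h$ and $\sharp f = \sharp h + 1$, yielding the two stated formulas for $\widehat{X}_a K_h$.

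To pass from $K_h$ to $U_h$, I would invoke bar-invariance: since $\widehat{X}_a$ commutes with the bar-involution on $\widehat{\mc E}^{m|n}$, the vector $\widehat{X}_a U_h$ is bar-invariant. Substituting $U_h = K_h + \sum_{g \prec h} u_{g,h}(q) K_g$ with $u_{g,h}(q) \in q\mathbb{Z}[q]$ and applying the previous calculation term by term shows that $\widehat{X}_a U_h$ lies in $K_f + \sum_{g' \prec f} q\mathbb{Z}[q]\, K_{g'}$, and the uniqueness part of Theorem~\ref{KL-Lemma} then forces $\widehat{X}_a U_h = U_f$. The compositions $\widehat{Y}_a \widehat{X}_a U_h$ follow by running the same analysis backwards: in the atypicality-preserving case the two moves simply invert each other, while in the atypicality-decreasing case the extra term $q K_{f-d_i+d_j}$ produced by $\widehat{X}_a$ feeds back another copy of $U_h$ under $\widehat{Y}_a$, producing the total coefficient $q + q^{-1}$. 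The main technical obstacle will be verifying that Procedure~\ref{Br1Procedure}'s loops (II) and (II*) are precisely what is needed for this tensor-by-tensor computation to produce only the two asserted terms and no spurious ones; once that is settled, the remainder is a mechanical application of Theorem~\ref{KL-Lemma}.
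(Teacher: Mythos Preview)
The paper does not give its own proof of this lemma; it records it with a citation to \cite[Lemmas 3.19 and 3.21]{Br1}. Your outline for $\widehat{X}_aK_h$ and for $\widehat{X}_aU_h=U_f$ is essentially the argument in that reference and is correct, though you should make explicit that the coefficient of $K_f$ in $\widehat{X}_aU_h$ is exactly $1$---that is, that no $K_g$ with $g\prec h$ maps to $K_f$ under $\widehat{X}_a$---since this is what the uniqueness in Theorem~\ref{KL-Lemma} needs; the step-(II)/(III) combinatorics give exactly this.

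Your sketch for $\widehat{Y}_a\widehat{X}_aU_h$ is the one place with a real gap. ``Running the same analysis backwards'' does not work as stated: already $\widehat{Y}_aK_f = q^{-1}K_h$ in the $\sharp f=\sharp h+1$ case, so the $q\Z[q]$-triangularity mechanism is unavailable in that direction, and you have no a priori control on $\widehat{Y}_a$ applied to the lower-order terms of $U_f$. The clean fix is a $U_q(\mf{sl}_2)$ argument. Since $g\preceq h$ forces $\text{wt}^\epsilon_{-m}g=\text{wt}^\epsilon_{-m}h$, every $K_g$ appearing in $U_h$ has the same $K_{a,a+1}$-weight, which the step-(II)/(III) conditions pin down as $q^1$ when $\sharp f=\sharp h$ and $q^2$ when $\sharp f=\sharp h+1$. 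As $\wedge^m\VV\otimes\wedge^n\WW$ decomposes into $U_q(\mf{sl}_2)$-modules of dimension at most $3$, these are top weights, so $\widehat{Y}_aU_h=0$ automatically. The quantum commutator relation $E_aF_a-F_aE_a=(K_{a,a+1}-K_{a+1,a})/(q-q^{-1})$ then yields $\widehat{Y}_a\widehat{X}_aU_h = [r]_q\,U_h$ with $r=1,2$ respectively, which are the two identities claimed.
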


\vskip 12pt

\section{Representations of the Lie superalgebra $\mf{q}(n)$} \label{SectionRepnOfLieSuperalg}

\subsection{Queer Lie superalgebra} \label{SectionQueerLieSuperalgebra}
Let $\mathbb{C}^{m|n}$ be the complex superspace of dimension $(m|n)$. The {\em general linear Lie superalgebra} $\mathfrak{gl}(m|n)$ may be realized as $(m+n) \times (m+n)$ complex matrices:
\begin{align} \label{glrealization}
 \left( \begin{array}{cc} A & B\\
C & D\\
\end{array} \right),
\end{align}
where $A,B,C$ and $D$ are respectively $m\times m, m\times n, n\times m, n\times n$ matrices. Let $E_{a,b}$ be the elementary matrix in $\mathfrak{gl}(m|n)$ with $(a,b)$-entry $1$ and other entries 0, for $a,b \in I(m|n)$ and let $\h'=\h'_{m|n}$ be the standard Cartan subalgebra of $\gl(m|n)$ spanned by the basis elements $\{E_{aa}\}$ and dual basis elements $\{\delta'_a\}$, for $a\in I(m|n)$. Denote by $\Phi'^+$ the set of positive roots in the standard Borel subalgebra.

For $m=n$, the subspace
\begin{align} \label{qnrealization}
 \mf{g}:= \mathfrak{q}(n)=
\left\{ \left( \begin{array}{cc} A & B\\
B & A\\
\end{array} \right) \middle\vert\ A, B: \ \ n\times n \text{ matrices} \right\}
\end{align}
forms a subalgebra of $\mathfrak{gl}(n|n)$ called the {\em queer Lie superalgebra}.

The set $\{e_{ij}, \bar{e}_{ij}|1\leq i,j \leq n\}$ is a linear basis for $\mathfrak{g}$, where $e_{ij}= E_{-n-1+i,-n-1+j}+E_{i,j}$ and $\bar{e}_{ij}= E_{-n-1+i,j}+E_{i,-n-1+j}$. Note that the even subalgebra $\mathfrak{g}_{\bar{0}}$ is spanned by $\{e_{ij}|1\leq i,j\leq n\}$, which is isomorphic to the general linear Lie algebra $\mathfrak{gl}(n)$.


Let $\mathfrak{h} = \mathfrak{h}_{\bar{0}}\oplus \mathfrak{h}_{\bar{1}}$ be the standard Cartan subalgebra of $\mathfrak{g}$, with linear bases $\{h_i:= e_{ii}| 1\leq  i \leq n\}$ and $\{\bar{h}_{i}:= \bar{e}_{ii}|1\leq i \leq n\}$ of $\mathfrak{h}_{\bar{0}}$ and $ \mathfrak{h}_{\bar{1}}$, respectively. Let $\{\delta_i| 1\leq i\leq n\}$ be the basis of $\mathfrak{h}_{\bar{0}}^{*}$ dual to $\{h_i|1\leq i\leq n\}$. We define a symmetric bilinear form $( ,) $ on $\mathfrak{h}_{\bar{0}}^{*}$ by $( \delta_i,\delta_j)  = \delta_{ij}$, for $1\leq i,j\leq n$.

 We denote by $\Phi,\Phi_{\bar{0}},\Phi_{\bar{1}}$ the sets of all roots, even roots and odd roots of $\mf{g}$, respectively. Let $\Phi^+=\Phi^+_\even\sqcup\Phi^+_\odd$ be the set of positive roots in its standard Borel subalgebra $\mf b=\mf b_\even\oplus\mf b_\odd$, which consists of matrices of the form \eqref{qnrealization} with $A$ and $B$ upper triangular. Ignoring parity we have $ \Phi_\even=\Phi_\odd = \{\delta_i - \delta_j| 1\leq i,j \leq n\}$ and $\Phi^+ = \{\delta_i- \delta_j| 1\leq i<  j \leq n\}$. We denote by $\leq$ the usual partial order on the weights $\mf{h}_{\bar{0}}^*$ defined by using $\Phi^+$. The Weyl group $W$ of $\mathfrak{g}$ is defined to be the Weyl group of the reductive Lie algebra $\mathfrak{g}_{\bar{0}}$ and hence acts naturally on $\mathfrak{h}_{\bar{0}}^*$ by permutation. For a given root $\alpha = \delta_i - \delta_j \in \Phi$, let $\bar{\alpha} :=\delta_i + \delta_j $.

In the this paper, $\mf{g}$-modules are always supposed to have compatible $\mathbb{Z}_2$-gradations with $\mf{g}$-actions, and $\mf{g}$-homomorphisms are not necessarily even. For a $\mathfrak{g}$-module $M$ and $\mu \in \mathfrak{h}_{\bar{0}}^*$, let $M_{\mu}:=\{m\in M| h\cdot m = \mu(h)m, \text{ for } h\in \mathfrak{h}_{\bar{0}}\}$ denote its $\mu$-weight space. If $M$ has a weight space decomposition $M = \oplus_{\mu\in \mathfrak{h}_{\bar{0}}^*}M_{\mu}$, its character is given as usual by $\text{ch}M = \sum_{\mu \in \mathfrak{h}_{\bar{0}}^*}\text{dim}M_{\mu}e^{\mu} $, where $e$ is an indeterminate. In particular, we have the root space decomposition $\mathfrak{g} = \mf{h}\oplus(\oplus_{\alpha \in \Phi} \mathfrak{g}_{\alpha})$ with respect to the adjoint representation of $\mathfrak{g}$.

Let $\la = \sum_{i=1}^{n} \la_i \delta_i  \in \mathfrak{h}_{\bar{0}}^*$, and consider the symmetric bilinear form on $\mathfrak{h}_{\bar{1}}^*$ defined by $\langle\cdot,\cdot\rangle_{\la} : = \la([\cdot,\cdot] )$. Let $\ell(\la)$ be the number of $i$'s with $\la_i \neq 0$.  Let $1\leq i_1<i_2< \cdots < i_{\ell(\la)}\leq n$  such that $\la_{i_1}, \la_{i_2}, \ldots , \la_{i_{\ell(\la)}}$ are non-zero. Denote by $\lceil \cdot \rceil$ the ceiling function. Let $\mathfrak{h}'_{\bar{1}}$ be a maximal isotropic subspace  of $\mathfrak{h}_{\bar{1}}$ associated to $\langle\cdot,\cdot\rangle_{\la} $. Put $\mathfrak{h}' =  \mathfrak{h}_{\bar{0}} \oplus \mathfrak{h}'_{\bar{1}}$. Let $\mathbb{C}v_{\la}$ be the one-dimensional $\mathfrak{h}'$-module with $h\cdot v_{\la} = \la(h)v_{\la}$ and $h' \cdot v_{\la} =0 $ for $h\in \mathfrak{h}_{\bar{0}}$, $h' \in \mathfrak{h}'_{\bar{1}}$. Then $I_{\la} : = \text{Ind}_{\mathfrak{h}'}^{\mathfrak{h}}\mathbb{C}v_{\la}$ is an irreducible $\mathfrak{h}$-module of dimension $2^{\lceil \ell(\la)/2 \rceil}$ (see, e.g., \cite[Section 1.5.4]{CW}). We let $\Delta(\la): = \text{Ind}_{\mathfrak{b}}^{\mathfrak{g}}I_{\la}$ be the {\em Verma module}, where $I_{\la}$ is extended to a $\mathfrak{b}$-module in a trivial way, and define $L(\la)$ to be the unique irreducible quotient of $\Delta(\la)$. Note that $\Delta(\la)$ and $L(\la)$ are unique up to $\mf{g}$-isomorphisms.

  For a weight $\la \in \mf{h}_{\bar{0}}^*$, we let $\sharp \la$ to be the {\em atypicality degree} of $\la$ (see, e.g., \cite[Definition 2.49]{CW}).
 We say $\la$ is {\em typical} if $\sharp \la =0$; otherwise $\la$ is called {\em atypical}.

\subsection{$\La_{k,\zeta}$-weights, $\mathbb{Z}$-gradations, and categories $\mc{HC}_{k,\zeta}(\mf{l}^{\rl})$} \label{SubsectionZgradations} Let $k,n\in \mathbb{Z}_{\geq 0}$ with $k\le n$ and $\zeta \in \mathbb{C}\backslash \frac{\mathbb{Z}}{2}$. We let \begin{align*}
&\La_{k,\zeta}:=\{\la=\sum_{r=1}^n\la_i\ep_i\in\h_\even^*|\la_i\in \zeta+\Z\text{ and }\la_j\in-\zeta+\Z,1\le i\le k < j\le n\},
\end{align*} 

Let $s,t\in\N$ such that $k=r_1+r_2+\ldots+r_s$ and $n-k=l_1+l_2+\ldots+l_t$, where $r_i,l_j\in\N$. Let $\texttt{r}=(r_1,\ldots,r_s)$ and $\texttt{l}=(l_1,\ldots,l_t)$, and put $\texttt{r}^c=\sum_{i=1}^cr_i$, and $\texttt{l}^d=k+\sum_{i=1}^d l_i$, for $c=0,\ldots,s$ and $d=0,\ldots,t$. We define ($c\not=s$ and $d\not=t$)
\begin{align*}
&\La_{k,\zeta}^{\texttt{r,l}}:=\{\la\in\La|\la_i-\la_{i+1}\in\N,\text{ for }\texttt{r}^c< i< \texttt{r}^{c+1}\text{ or }\texttt{l}^d< i< \texttt{l}^{d+1}\}.
\end{align*}

In the case $\texttt{r}=\underbrace{(1,1,\ldots,1)}_k$ and $\underbrace{\texttt{l}=(1,1,\ldots,1)}_{n-k}$ we have $\La^{(1,\ldots,1),(1,\ldots,1)}=\La_{k,\zeta}$. In the  case $\texttt{r}=(k)$ and $\texttt{l}=(n-k)$ we shall use the notation $\La^{+}_{k,\zeta}$ for $\La^{(k),(n-k)}_{k,\zeta}$, i.e.,
\begin{align*}
&\La_{k,\zeta}^{+}:=\{\la\in\La_{k,\zeta}|\la_i-\la_{i+1}\in\N, \text{ for }0< i< k \text{ or }k< i<n\}.
\end{align*}

Associated to $\La_{k,\zeta}^\texttt{r,l}$ we have a Levi subalgebra
\begin{align*}
\mf l^\rl=\bigoplus_{i=1}^s \mf{q}(r_i) \oplus \bigoplus_{j=1}^t \mf{q}(l_i)\subseteq \mf{q}(n),
\end{align*}
with corresponding parabolic subalgebra $\mf p^\rl$, nilradical $\mf u^\rl$, and opposite nilradical $\mf u^{\rl,-}$. Denote the roots in $\mf u^\rl$ by $\Phi^+(\mf u^\rl)$. If $\Pi$ denotes the set of simple roots of even positive roots $\Phi_\even$, we let $\Pi^\rl\subseteq\Pi$ denote the subset such that the even and odd root spaces $\G_\alpha\subseteq\mf l^\rl$ if and only if $\alpha\in\Pi^\rl$. Associated to $\La^\rl_{k,\zeta}$ we thus have a $\Z$-gradation of $\G=\bigoplus_{j\in\Z}\G_j$ uniquely determined by
\begin{align}\label{Z-gradation}
\deg\h=0,\quad \deg\G_{\pm\alpha}=0,\quad\deg\G_{\pm\beta}=\pm1,\quad\text{for } \alpha\in\Pi^\rl,\beta\in\Pi\setminus\Pi^\rl.
\end{align}
Note that this grading is also given by the formula \begin{align} \label{GradingOperators} [D,X] = jX, \ \ \text{ for }X\in \mf{g}_j, j\in \mathbb{Z},  \end{align} where $D$ is grading operator $\sum_{c=0}^{s-1} (n-c)\sum_{p=\texttt{r}^c+1}^{\texttt{r}^{c+1}} e_{pp} + \sum_{d=0}^{t-1} (n-s-d)\sum_{q=\texttt{l}^d+1}^{\texttt{l}^{d+1}} e_{qq}\in \mf{h}_{\bar{0}}$. Of course we have $\G_0 = \mf l^\rl$.

Let $W^\rl$ denote the Weyl group of $\mf l^\rl$, so that we have $W^\rl\cong\mf S_{r_1}\times\cdots\times\mf S_{r_s}\times\mf S_{l_1}\times\cdots\times\mf S_{l_t}$.  Let  $w^{\rl}_0$ be the longest element in  $W^{\rl}$ so that, for  $\la\in\La^{\rl}_{k,\zeta}$, we have  $-w^{\rl}_0\la\in\La^{\rl}_{k,-\zeta}$. In the case $\texttt{r}=\underbrace{(1,1,\ldots,1)}_k$ and $\underbrace{\texttt{l}=(1,1,\ldots,1)}_{n-k}$ we shall write $w_0$ for $w_0^{\rl}$, while in the  case $\texttt{r}=(k)$ and $\texttt{l}=(n-k)$ we shall write $w^+_0$ for $w_0^\rl$.

For given Levi subalgebra $\mf{s}$ of $\mf{g}$ containing $\h$, denote by $\mc{HC}_{k,\zeta}(\mf{s})$ the category of $\mf{s}$-modules that are direct sums of finite-dimensional simple $\mf{s}_{\bar{0}}$-modules with highest weights in $\La^{\rl}_{k,\zeta}$.

Let $\mf{b}^{\rl}$ be the standard Borel subalgebra of $\mf{ l^\rl}$, namely, $\mf{b}^{\rl}$ is generated by $\mf{h} \oplus (\oplus_{\alpha\in \Pi^\rl} \mf{g}_{\alpha})$. For given $\la\in\La^\rl_{k,\zeta}$, denote by  $\Delta^0(\la):= \text{Ind}_{\mf{b}^{\rl}}^{\mf l^\rl} I_{\la}$ the $\mf l^\rl$-Verma module of highest weight $\la$. Let $L^0(\la)$ be its unique irreducible quotient with highest weight $\la$. Note that $L^0(\la)$ is a typical $\mf l^\rl$-module and is furthermore finite dimensional.

\begin{lem} \label{CharacterizationOfLocFinModules}
$\mc{HC}_{k,\zeta}({\mf  l^{\emph{\rl}}})$ is a semisimple category with irreducible objects $\{L^0(\la)|\la\in\La^{\emph{\rl}}_{k,\zeta}\}$.
\end{lem}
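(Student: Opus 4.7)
The plan is to reduce to a single queer factor of $\mf l^\rl$, to exploit the fact that $\zeta \notin \hf\Z$ forces every weight in $\La^\rl_{k,\zeta}$ to be fully typical, and to conclude semisimplicity by central-character separation on the finite-dimensional subcategory.

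First I would use that $\mf l^\rl = \bigoplus_{i=1}^s \mf q(r_i) \oplus \bigoplus_{j=1}^t \mf q(l_j)$ is a direct sum of ideals and that the weights in $\La^\rl_{k,\zeta}$ decompose block-wise: the restriction to each $\mf q(r_i)$-block lies in the coset $\zeta + \Z$ and is dominant for that $\mf{gl}(r_i)$, while the restriction to each $\mf q(l_j)$-block lies in $-\zeta+\Z$ and is dominant for $\mf{gl}(l_j)$. Any $\mf l^\rl_\even$-semisimple module in $\mc{HC}_{k,\zeta}(\mf l^\rl)$ therefore factors as an outer tensor product along the queer factors, so the claim reduces to the case $\mf l^\rl = \mf q(r)$ with all weights in a single coset, say $\zeta + \Z$.

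Next I would record two finiteness observations. \emph{Typicality:} for any $\la$ with coordinates in $\zeta + \Z$ one has $\la_i + \la_j \in 2\zeta + \Z$, which is nonzero since $2\zeta \notin \Z$; together with $\la_i \neq 0$ this forces $\sharp \la = 0$, so every weight entering our category is typical. \emph{Local $\mf q(r)$-finiteness:} any finite-dimensional $\mf q(r)_\even$-simple summand $V \subseteq M$ generates an $\mf q(r)$-submodule $U(\mf q(r)_\odd)\cdot V$ of finite dimension, because $\dim \mf q(r)_\odd < \infty$. Consequently every $M$ is a directed union of finite-dimensional $\mf q(r)$-submodules, and semisimplicity of the full category reduces to that of its subcategory of finite-dimensional objects, in which standard highest-weight theory identifies the simples with $\{L^0(\la) \mid \la \in \La^\rl_{k,\zeta}\}$.

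To close the argument one must show $\operatorname{Ext}^1(L^0(\la), L^0(\mu)) = 0$ in the finite-dimensional subcategory. For $\la \neq \mu$ this is immediate from central-character separation: by Sergeev's description of the center of $\mf q(r)$, typical central characters are free on $W$-orbits of typical weights, and the within-block dominance conditions pin down a unique dominant representative, so $\chi_\la \neq \chi_\mu$. The main obstacle is the self-extension case $\operatorname{Ext}^1(L^0(\la), L^0(\la)) = 0$. My plan is to exploit strong typicality of $\chi_\la$: for typical $\la$, the Kac-style module built from the Clifford module $I_\la$ is projective in its block and already coincides with $L^0(\la)$, so the typical block contains only this one simple and is semisimple; alternatively, non-degeneracy of the Shapovalov contravariant form at a typical $\la$ rules out non-split self-extensions by a direct argument.
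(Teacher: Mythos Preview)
Your overall strategy---typicality forced by $\zeta \notin \hf\Z$, central-character separation for distinct $\la,\mu$, then a separate treatment of self-extensions---matches the paper's. Two issues need attention.

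First, the claim that every object of $\mc{HC}_{k,\zeta}(\mf l^\rl)$ ``factors as an outer tensor product along the queer factors'' is false as stated: already a direct sum of two non-isomorphic simples is not a tensor product. What is true is that \emph{simples} factor (up to the usual type-$\texttt{M}$/type-$\texttt{Q}$ bookkeeping), and one could then run a K\"unneth argument on $\text{Ext}^1$; but this needs to be said. The paper sidesteps this reduction entirely and works with $\mf l^\rl$ as a whole.

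Second, and this is the genuine gap, your self-extension step is not a proof. The phrase ``the Kac-style module built from $I_\la$ is projective in its block and already coincides with $L^0(\la)$'' is problematic: there is no natural finite-dimensional Kac module for $\mf q(r)$ (the odd part $\mf q(r)_\odd \cong \gl(r)$ admits no $\Z$-grading producing one), and in any case asserting that $L^0(\la)$ is projective in its block is precisely the vanishing of $\text{Ext}^1(L^0(\la),L^0(\la))$ you are trying to establish. The Shapovalov-form alternative is likewise a slogan rather than an argument. The paper's argument is short and concrete: given $0 \to L^0(\la) \to E \to L^0(\la) \to 0$, look at the $\la$-weight space $E_\la$ as an $\mf h$-module. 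Since every $\la_i \in \pm\zeta+\Z$ is nonzero, the Clifford form $\langle\cdot,\cdot\rangle_\la$ on $\mf h_\odd$ is non-degenerate, so $\mc{HC}_{k,\zeta}(\mf h)$ is semisimple (this is \cite[Lemma~1]{Fr}) and $E_\la = I_\la^{(1)} \oplus I_\la^{(2)}$ with $I_\la^{(1)}$ the highest weight space of the given submodule $L^0(\la)$. The $\mf l^\rl$-submodule $W=U(\mf l^\rl)I_\la^{(2)}$ then has $W_\la = I_\la^{(2)}$ (otherwise $W$ would meet the irreducible submodule nontrivially), so $W$ is the desired complement and the sequence splits.
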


\begin{proof}
It is enough to show that the full subcategory of $\mc{HC}_{k,\zeta}({\mf  l^{{\rl}}})$ consisting of objects with composition factors lying in $\{L^0(\la)|\la\in\La^{{\rl}}_{k,\zeta}\}$ is a semisimple category.

Observe that $L^0(\la)$ and $L^0(\mu)$ have different central characters for $\la, \mu\in\La^{{\rl}}_{k,\zeta}$ with $\la \neq \mu$ (see, e.g., \cite[Theorem 2.48]{CW}), and so there are no nontrivial extensions between these two irreducibles. Therefore, it suffices to show that $L^0(\la)$ has no self-extension in $\mc{HC}_{k,\zeta}({\mf  l^{{\rl}}})$, for every $\la\in\La^\rl_{k,\zeta}$. Suppose we have a short exact sequence of the form
\begin{align} \label{SESInLevi}
0\rightarrow L^0(\la) \rightarrow E \xrightarrow{f} L^0(\la) \rightarrow 0,
\end{align} in $\mc{HC}_{k,\zeta}({\mf  l^{{\rl}}})$.
Since $\mc{HC}_{k,\zeta}({\mf h})$ is a semisimple category (see, e.g., \cite[Lemma 1]{Fr}), \eqref{SESInLevi} implies that as $\h$-modules we have $E_\la=I_\la\oplus I_\la$. To distinguish these two copies let us write $E_\la=I_\la^{(1)}\oplus I^{(2)}_\la$, where we let $I^{(1)}_\la$ be highest weight space of the submodule $L^0(\la)$ in \eqref{SESInLevi}. Now consider the submodule $W=U(\mf l^\rl)I^{(2)}_\la\subseteq E$. Since $U(\mf l^\rl)I^{(1)}_\la=L^0(\la)$ is irreducible and $W_\la=I^{(2)}_\la$, we have $U(\mf l^\rl)I^{(2)}_\la\cap U(\mf l^\rl)I^{(1)}_\la=0$ and hence $E=W\oplus L^0(\la)$. It follows that $W\cong L^0(\la)$, and so \eqref{SESInLevi} is split.
\end{proof}

\subsection{Characters of irreducible $\mf l^\rl$-modules of $\La^\rl_{k,\zeta}$-highest weights} \label{Subsection::IrrChOfL0}
For $1\le c\le s$ and $1\le d\le t$, let
\begin{align*}
^{(c)}\la=(\la_{\texttt{r}^{c-1}+1},\ldots,\la_{\texttt{r}^{c}}) \quad \text{and}\quad \la^{(d)}=(\la_{\texttt{l}^{d-1}+1},\ldots,\la_{\texttt{l}^{d}}),
\end{align*}
regarded as weights in the even parts of the Cartan subalgebras of the corresponding queer Lie superalgebras $\mf{q}(r_c)$ and $\mf{q}(l_d)$, respectively. Then we have by Penkov's finite-dimensional typical character formula \cite[Theorem 2]{Pe}
\begin{align*}
\text{ch}L(\mf{q}(r_c),{^{(c)}\la})=2^{\lceil r_c/2\rceil}\prod_{\texttt{r}^{c-1}+1\le i<j\le \texttt{r}^c}\frac{({1+e^{{-\delta_i+\delta_j}}})}{({1-e^{{-\delta_i+\delta_j}}})}\sum_{w\in\mf S_{r_c}}(-1)^{\ell(w)}w(e^{{^{(c)}\la}}),\\
\text{ch}L(\mf{q}(l_d),\la^{(d)})=2^{\lceil l_d/2\rceil}\prod_{\texttt{l}^{d-1}+1\le s<t\le \texttt{l}^d}\frac{({1+e^{{-\delta_s+\delta_t}}})}{({1-e^{{-\delta_s+\delta_t}}})}\sum_{\sigma\in\mf S_{l_d}}(-1)^{\ell(\sigma)}\sigma(e^{\la^{(d)}}).
\end{align*} Therefore we obtain the following character formulas.
\begin{prop} \emph{(cf}. \cite[Section 3.1.3]{CW}\emph{)} \label{IrrChForLevi}
\begin{align*} \label{ChOfL0}
\emph{ch}L^0(\la)=&2^{\lceil n/2\rceil} \prod_{c=1}^s\prod_{\texttt{\emph{r}}^{c-1}+1\le i<j\le \texttt{\emph{r}}^c}\frac{({1+e^{{-\delta_i+\delta_j}}})}{({1-e^{{-\delta_i+\delta_j}}})}\sum_{w\in\mf S_{r_c}}(-1)^{\ell(w)}w(e^{{^{(c)}\la}})\\ &\prod_{d=1}^t\prod_{\texttt{\emph{l}}^{d-1}+1\le s<t\le \texttt{\emph{l}}^d}\frac{({1+e^{{-\delta_s+\delta_t}}})}{({1-e^{{-\delta_s+\delta_t}}})}\sum_{\sigma\in\mf S_{l_d}}(-1)^{\ell(\sigma)}\sigma(e^{\la^{(d)}}).
\end{align*}
\end{prop}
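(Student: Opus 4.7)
The plan is to identify $L^0(\la)$, up to multiplicity, with the outer tensor product of the typical finite-dimensional irreducibles over the queer summands of $\mf{l}^{\rl}$, and then apply Penkov's character formula block-by-block.

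First, since $\zeta\notin\hf\Z$ and $\la\in\La^{\rl}_{k,\zeta}$, every coordinate $\la_i$ lies in $\pm\zeta+\Z$ and is in particular nonzero. Hence $\ell(\la)=n$ and $\dim I_\la=2^{\lceil n/2\rceil}$; analogously $\dim I_{{^{(c)}\la}}=2^{\lceil r_c/2\rceil}$ and $\dim I_{\la^{(d)}}=2^{\lceil l_d/2\rceil}$. Within each queer block of $\mf{l}^{\rl}$, the restricted highest weight ${^{(c)}\la}$ has consecutive differences in $\N$ (by definition of $\La^{\rl}_{k,\zeta}$) and is Penkov-typical, since $\la_i+\la_j\in\pm 2\zeta+\Z$ is nonzero for $i,j$ in the same block; the same applies to $\la^{(d)}$. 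Penkov's theorem then yields the stated finite-dimensional irreducibles $L(\mf{q}(r_c),{^{(c)}\la})$ and $L(\mf{q}(l_d),\la^{(d)})$ with the displayed characters.

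Next I would form
\[
M := \bigotimes_{c=1}^{s}L(\mf{q}(r_c),{^{(c)}\la})\;\otimes\;\bigotimes_{d=1}^{t}L(\mf{q}(l_d),\la^{(d)}),
\]
viewed as an $\mf{l}^{\rl}$-module via the direct sum decomposition of $\mf{l}^{\rl}$ into queer summands. Clearly $M\in\mc{HC}_{k,\zeta}(\mf{l}^{\rl})$, so by \lemref{CharacterizationOfLocFinModules} it decomposes as a direct sum of irreducibles $L^0(\mu)$. The center of $U(\mf{l}^{\rl})$ factors through the centers of the queer summands, and block-by-block typicality ensures that the central character of each summand determines the dominant highest weight there uniquely. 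Hence only $L^0(\la)$ can occur in this decomposition, i.e.\ $M$ is $L^0(\la)$-isotypic.

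The multiplicity is read off from the $\la$-weight space: $M_\la=\bigotimes_c I_{{^{(c)}\la}}\otimes\bigotimes_d I_{\la^{(d)}}$ has dimension $\prod_c 2^{\lceil r_c/2\rceil}\prod_d 2^{\lceil l_d/2\rceil}$, whereas $L^0(\la)_\la=I_\la$ has dimension $2^{\lceil n/2\rceil}$. Therefore
\[
M\cong L^0(\la)^{\oplus N},\qquad N:=2^{\sum_c\lceil r_c/2\rceil+\sum_d\lceil l_d/2\rceil-\lceil n/2\rceil}.
\]
Dividing $\text{ch}\,M=\prod_c\text{ch}\,L(\mf{q}(r_c),{^{(c)}\la})\prod_d\text{ch}\,L(\mf{q}(l_d),\la^{(d)})$ by $N$ and substituting Penkov's formulas for each factor, the powers of $2$ collapse to the prefactor $2^{\lceil n/2\rceil}$ and the remaining Weyl-sum and Cartan-factor pieces match block-by-block, yielding the claimed formula.

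The main subtlety is the isotypicity of $M$, which rests on the Harish-Chandra–type statement for $\mf{q}(r)$ that typical dominant weights are separated by their central characters; once that is in hand, the rest is a direct Clifford-module count together with Penkov's formula applied on each summand.
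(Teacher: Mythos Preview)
Your proposal is correct and follows essentially the same approach as the paper. The paper itself provides no detailed argument: it simply records Penkov's typical character formula for each queer summand, cites \cite[Section~3.1.3]{CW} for the passage from the outer tensor product to the irreducible over the direct sum, and states the result; your write-up spells out precisely that passage (isotypicity via block-by-block central characters plus the Clifford-dimension count for the multiplicity $N$), so there is nothing materially different to compare.
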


Recall the Levi subalgebra $\mf l^\rl$ with corresponding parabolic subalgebra $\mf{p}^\rl$, nilradicals $\mf u^\rl$, and opposite nilradical $\mf u^{\rl,-}$.
Observe that as an $\mf l^\rl$-module, we have
\begin{align*}
\mf u^{\rl,-}\cong& \bigoplus_{1\le i<j\le s}\hf\left[\C^{r_i|r_i*}\otimes\C^{r_j|r_j}\right]\oplus \bigoplus_{i,j}\hf\left[\C^{r_i|r_i*}\otimes\C^{l_j|l_j}\right]\oplus\\
& \bigoplus_{1\le i<j\le t}\hf\left[\C^{l_i|l_i*}\otimes\C^{l_j|l_j}\right].
\end{align*}
Above the factor $\hf$ is explained as follows: For given $p,q \in \mathbb{N}$,  both $\C^{p|p*}$ and $\C^{q|q}$ are so-called type $\texttt{Q}$ supermodules, and it is known that their tensor product is isomorphic to a direct sum of two copies of the same irreducible $\mf q(p)\oplus\mf{q}(q)$-module. The factor $\hf$ means that we take one copy of it, see, e.g., \cite[Section 3.1.3]{CW}.

\subsection{Parabolic BGG categories}\label{sec:par:cat}
Let $\mc{O}_n$ denote the BGG category of finitely generated $\mf{q}(n)$-modules which are locally finite over $\mf{b}$ and semisimple over $\mf{h}_{\bar{0}}$. In $\mc{O}_n$, we allow arbitrary  (not necessarily even) $\mf{g}$-morphisms. It is well-known that $\{L(\la)|\la \in \mf{h}^*_{\bar{0}}\}$ is a complete set of irreducible objects in $\mc{O}_n$, up to isomorphism. Let $\mc O^\rl_{k,\zeta}$ denote the full subcategory of $\mc{O}_n$ consisting of objects whose composition factors lie in $\{L(\la)| \la\in\La^\rl_{k,\zeta}\}$.  We shall use the following notations for the two extreme cases:
\begin{align*}
\mc O_{k,\zeta}:=\mc O^{(1,\ldots,1),(1,\ldots,1)}_{k,\zeta}, \quad \mc F_{k,\zeta}:=\mc O^{(k),(n-k)}_{k,\zeta}.
\end{align*}

Recall that $L^0(\la)$ denotes the finite-dimensional irreducible $\mf{l}^\rl$-module of highest weight $\la$ in Section \ref{Subsection::IrrChOfL0}. Note $L^0(\la)$ can be extended to a $\mf p^\rl$-module by letting $\mf u^\rl$ act trivially. Denote the corresponding {\em parabolic Verma module} by
\begin{align*}
\Delta^\rl(\la)=\text{Ind}_{\mf p^\rl}^\G L^0(\la).
\end{align*}

The following proposition is a characterization of the category $\mc O_{k,\zeta}^\rl$.

\begin{prop}
{\em $\mc O_{k,\zeta}^\rl$} is the full subcategory of $\mc O_n$ of {\em $\mf p^{\rl}$}-locally finite, completely reducible {\em $\mf l^\rl$}-modules of $\La^{\emph{\rl}}_{k,\zeta}$-highest weights.
\end{prop}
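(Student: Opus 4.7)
The plan is to establish both containments between $\mc O_{k,\zeta}^\rl$ and the right-hand subcategory.

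For the containment $\mc O_{k,\zeta}^\rl\subseteq$ (right-hand side), I would fix $M\in\mc O_{k,\zeta}^\rl$ and argue in two steps. First, every weight of $M$ lies in $\La_{k,\zeta}$: a composition factor $L(\la)$ with $\la\in\La^\rl_{k,\zeta}$ has its weights obtained from $\la$ by adding integer combinations of the roots $\delta_i-\delta_j$, and these integer translations preserve the per-block $\pm\zeta$-type defining $\La_{k,\zeta}$. Since $\mf l^\rl_{\bar 0}$ is reductive with Cartan $\mf h_{\bar 0}$, and $M$ has finite-dimensional $\mf h_{\bar 0}$-weight spaces (as an object of $\mc O_n$), $M$ decomposes as a direct sum of finite-dimensional simple $\mf l^\rl_{\bar 0}$-modules whose highest weights are $\mf l^\rl_{\bar 0}$-dominant weights of $M$ lying in $\La^\rl_{k,\zeta}$. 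Thus $M\in\mc{HC}_{k,\zeta}(\mf l^\rl)$, and Lemma \ref{CharacterizationOfLocFinModules} upgrades this to $\mf l^\rl$-complete reducibility with $\La^\rl_{k,\zeta}$-highest weights. For $\mf p^\rl$-local finiteness, given $v\in M$, the $\mf b$-local finiteness inherited from $\mc O_n$ forces $U(\mf u^\rl)\cdot v\subseteq U(\mf n^+)\cdot v$ to be finite-dimensional; this finite-dimensional subspace meets only finitely many $L^0(\mu)$-summands of the $\mf l^\rl$-semisimple $M$ (each itself finite-dimensional), so its $U(\mf l^\rl)$-closure is again finite-dimensional, and PBW yields $U(\mf p^\rl)\cdot v=U(\mf l^\rl)U(\mf u^\rl)\cdot v$.

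For the reverse containment, let $M$ satisfy the three conditions and let $L(\nu)$ be any $\mf g$-composition factor of $M$, realized as $L(\nu)\cong M_2/M_1$ for $\mf g$-submodules $M_1\subsetneq M_2\subseteq M$. The complete reducibility of $M$ as $\mf l^\rl$-module makes $M_1$ split off as an $\mf l^\rl$-direct summand of $M_2$, producing an $\mf l^\rl$-embedding $L(\nu)\hookrightarrow M$. Pick a $\mf g$-highest weight vector $v\in L(\nu)_\nu$; it is killed by $\mf n^+\supseteq\mf l^\rl\cap\mf n^+$, so it is an $\mf l^\rl$-highest weight vector of weight $\nu$ in $M$. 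Writing $v=\sum_i v_i$ along the isotypic decomposition $M\cong\bigoplus_i L^0(\mu_i)^{\oplus m_i}$ with $\mu_i\in\La^\rl_{k,\zeta}$, each nonzero $v_i$ lies in an $L^0(\mu_i)$-summand and is still annihilated by $\mf l^\rl\cap\mf n^+$. Since the space of $(\mf l^\rl\cap\mf n^+)$-killed vectors in an irreducible highest-weight $\mf l^\rl$-module $L^0(\mu_i)$ coincides with its highest-weight space of weight $\mu_i$, any nonzero $v_i$ forces $\nu=\mu_i\in\La^\rl_{k,\zeta}$.

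The main obstacle lies in the $\mf l^\rl$-splitting step of the reverse direction: without the complete-reducibility hypothesis one could only bound $\nu$ from above in the $\mf l^\rl$-dominance order by some element of $\La^\rl_{k,\zeta}$, which is insufficient to place $\nu$ itself in $\La^\rl_{k,\zeta}$. A secondary subtlety is the identification of ``$\mf l^\rl_{\bar 0}$-dominant weights in $\La_{k,\zeta}$'' with $\La^\rl_{k,\zeta}$, which reduces to the observation that the simple roots of $\mf l^\rl_{\bar 0}$ are precisely the intra-block simple roots of $\mf g$.
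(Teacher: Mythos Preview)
Your forward inclusion contains a genuine gap. You write that since $\mf l^\rl_{\bar 0}$ is reductive and $M$ has finite-dimensional $\h_{\bar 0}$-weight spaces, $M$ decomposes as a direct sum of finite-dimensional simple $\mf l^\rl_{\bar 0}$-modules. This implication is false: membership in $\mc O_n$ gives local nilpotence of $\mf n^+$ (hence of $\mf l^\rl_{\bar 0}\cap\mf n^+$), but it says nothing about $\mf l^\rl_{\bar 0}\cap\mf n^-$. A Verma module for $\mf g$ already shows that finite weight multiplicities together with reductivity of a Levi do not force local finiteness, let alone semisimplicity, over that Levi. You never use the hypothesis that the $\mf g$-composition factors of $M$ have highest weights in $\La^\rl_{k,\zeta}$ at this step, yet that hypothesis is precisely what is needed to get local finiteness of the negative part of $\mf l^\rl$. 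Since your argument for $\mf p^\rl$-local finiteness then rests on this $\mf l^\rl$-semisimplicity, that step inherits the same gap.

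The paper supplies exactly the missing ingredient: for $\la\in\La^\rl_{k,\zeta}$ one forms the parabolic Verma module $\Delta^\rl(\la)=\text{Ind}_{\mf p^\rl}^{\mf g}L^0(\la)$ and observes that, as an $\mf l^\rl$-module, $\Delta^\rl(\la)\cong \mc S(\mf u^{\rl,-})\otimes L^0(\la)$. All weights occurring are then $\mf l^\rl$-typical, so Lemma~\ref{CharacterizationOfLocFinModules} yields $\mf l^\rl$-complete reducibility (and a fortiori $\mf l^\rl$-local finiteness) of $\Delta^\rl(\la)$, hence of its quotient $L(\la)$. Once this is known for each composition factor, your argument for a general $M$ goes through by passing to a finite filtration. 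In other words, your approach is not wrong in spirit, but it needs the parabolic Verma module (or an equivalent argument showing that $f_\alpha$, $\alpha\in\Pi^\rl$, acts locally nilpotently on $L(\la)$) inserted before you can invoke Lemma~\ref{CharacterizationOfLocFinModules}.

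Your treatment of the reverse inclusion, via an $\mf l^\rl$-splitting of a subquotient and comparison of singular vectors in the isotypic components, is correct and in fact more explicit than what the paper writes; the paper's proof only records the forward direction on the level of simple modules.
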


\begin{proof}
Let $\la\in\La^\rl_{k,\zeta}$. Note that $\Delta^\rl(\la)\cong \mc S\left(\mf u^{\rl,-}\right)\otimes L^0(\la)$ as an $\mf l^\rl$-module, where $\mc S\left(\mf u^{\rl,-}\right)$ denotes the supersymmetric tensor of $\mf u^{\rl,-}$. Since all the weights in $\mc S\left(\mf u^{\rl,-}\right)$ are integer weight, we see that all the $\mf l^\rl$-weights of $\Delta^\rl(\la)$ are $\mf l^\rl$-typical, and so $\Delta^\rl(\la)$ is a completely reducible $\mf l^\rl$-module by Lemma \ref{CharacterizationOfLocFinModules}. Therefore $\Delta^\rl(\la)$ is $\mf p^\rl$-locally finite and completely reducible over $\mf l^\rl$. Since $L(\la)$ is a quotient of $\Delta^\rl(\la)$, it follows that $L(\la)$ is also  $\mf p^\rl$-locally finite and completely reducible as a $\mf l^\rl$-module. This completes the proof.
\end{proof}

In the case $\texttt{r}=\underbrace{(1,1,\ldots,1)}_k$ and $\underbrace{\texttt{l}=(1,1,\ldots,1)}_{n-k}$ we shall write $\Delta(\la)$ for $\Delta^\rl(\la)$, which is consistent with earlier notation, while in the  case $\texttt{r}=(k)$ and $\texttt{l}=(n-k)$ we shall write $K(\la)$ for $\Delta^\rl(\la)$.

\begin{rem}
The $\mf q(n)$-module $L(\la)$, for $\la\in\La^\rl_{k,\zeta}$, is almost always infinite dimensional. Indeed, it follows from \cite[Theorem 4]{Pe} (see also \cite[Theorem 2.18]{CW}) that $L(\la)$ is finite dimensional if and only if $\la\in\La^+_{k,\zeta}$ and $k\in\{0,n\}$.
\end{rem}

\begin{rem} \label{ParabolicSubcategoryRem}
Basic features of parabolic subcategory for semisimple Lie algebras are well-known, see e.g., \cite[Chapter 9]{Hum08}. In the case of Lie superalgebras,  we refer to \cite{Mar14} in which the parabolic subcategory $\widetilde{\mc{O}}^{\mf p^{\rl}}$ corresponding to $\mf p^{\rl}$ is defined to be the full subcategory of $\mc{O}_{n,\bar{0}}$ consisting of $\mf{p}^{\rl}$-locally finite, and $\mf{l}^{\rl}_{\bar{0}}$-semisimple $\mf{q}(n)$-modules, where $\mc{O}_{n,\bar{0}}$ is the underlying even category of $\mc{O}_{n}$. Note that the underlying even category of $\mc O_{k,\zeta}^\rl$ is precisely the full subcategory of $\widetilde{\mc{O}}^{\mf p^{\rl}}$ consisting of $\mf{q}(n)$-modules of $\La_{k,\zeta}$-weights since each weight in $\La_{k,\zeta}$ is $\mf{l}^{\rl}$-typical. \end{rem}

\section{Tilting modules in parabolic categories} \label{SectionTiltingModules}

Let $k,n\in\Z_{\ge 0}$ with $k\leq n$ and $\zeta \in \mathbb{C}\backslash \hf\Z$ as before. In this section, we study tilting modules in $\mc F_{k,\zeta}$, and formulate the BGG reciprocity in terms of tilting modules by means of the Arkhipov-Soergel duality (see, e.g., \cite[Corollary 5.8]{Br3}).


For a given $\la \in \La^\rl_{k,\zeta}$, we recall the definition and existence of tilting modules $T^\rl(\la)$ in $\mc O^\rl_{k,\zeta}$, provided by \cite[Theorem 6.3]{Br3} (also see \cite[Section 4.3]{Mar14}).
In the case $\texttt{r}=\underbrace{(1,1,\ldots,1)}_k$ and $\underbrace{\texttt{l}=(1,1,\ldots,1)}_{n-k}$ (respectively, $\texttt{r}=(k)$ and $\texttt{l}=(n-k)$), i.e., $\la\in\La_{k,\zeta}$ (respectively, $\la\in\La^+_{k,\zeta}$), we denote the tilting module by
$T(\la)$ (respectively, $U(\la)$).

For given $m\in\N$, recall that $w_0^{(m)}$ denotes the longest element in $\mf S_m$.
The following lemma is well-known.

\begin{lem}\label{lem:dual:hwt}  Let $m\in \mathbb{N}$.
If $L(\la)$ be a finite-dimensional $\mf{q}(m)$-module then $L(\la)^*\cong L(-w_0^{(m)} \la )$.
\end{lem}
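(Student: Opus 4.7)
The plan is to compute the highest weight of the dual module $L(\la)^*$ with respect to the standard Borel $\mf{b}$ of $\mf{q}(m)$, and then invoke uniqueness of the irreducible module of a given highest weight. Since $L(\la)$ is finite dimensional and irreducible, so is $L(\la)^*$; the task then reduces to identifying a highest-weight vector in $L(\la)^*$ and reading off its weight.

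First, I would observe that the set of $\mf{h}_\even$-weights of $L(\la)$ is stable under the Weyl group $W\cong\mf{S}_m$ of the even subalgebra $\mf{gl}(m)\subseteq\mf{q}(m)$. This is automatic once one restricts $L(\la)$ to its finite-dimensional $\mf{gl}(m)$-module structure, since the weights of any finite-dimensional $\mf{gl}(m)$-module are $W$-invariant. In particular, the lowest $\mf{h}_\even$-weight of $L(\la)$ equals $w_0^{(m)}\la$.

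Next, a standard weight-chase shows that any nonzero functional $f\in L(\la)^*$ supported on the lowest-weight space $L(\la)_{w_0^{(m)}\la}$ has weight $-w_0^{(m)}\la$ and is annihilated by every positive root vector of $\mf{q}(m)$: for $X\in\mf{g}_\alpha$ with $\alpha\in\Phi^+$, the identity $(X\cdot f)(v)=\pm f(X\cdot v)$ forces $v$ to have weight $w_0^{(m)}\la-\alpha$ if $f(X\cdot v)$ is to be nonzero, but this weight space vanishes by minimality of $w_0^{(m)}\la$. Consequently $L(\la)^*$ contains a highest-weight vector of weight $-w_0^{(m)}\la$, and irreducibility of $L(\la)^*$ then gives $L(\la)^*\cong L(-w_0^{(m)}\la)$.

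The only mildly subtle point, which I do not expect to obstruct the argument, is to check that the $\mf{h}$-module structure on the highest-weight space $L(\la)^*_{-w_0^{(m)}\la}$ is genuinely $I_{-w_0^{(m)}\la}$. This reduces to the observation that the $w_0^{(m)}\la$-weight space of $L(\la)$ is an irreducible Clifford $\mf{h}$-module of dimension $2^{\lceil\ell(\la)/2\rceil}$ (since $\ell(w_0^{(m)}\la)=\ell(\la)$ and the character of $L(\la)$ is $W$-invariant), and that dualization sends an irreducible Clifford module with $\mf{h}_\even$-character $\mu$ to the unique such module with character $-\mu$ and the same dimension.
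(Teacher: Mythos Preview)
Your proof is correct and follows essentially the same approach as the paper: restrict to $\gl(m)$, identify the lowest $\mf{h}_\even$-weight of $L(\la)$ as $w_0^{(m)}\la$, and conclude that $L(\la)^*$ has highest weight $-w_0^{(m)}\la$. The paper phrases the key step via the decomposition of $L(\la)$ into irreducible $\gl(m)$-summands with dominant highest weights $\mu\le\la$, whereas you phrase it via $W$-stability of the weight set; these are equivalent observations. Your final paragraph on the Clifford structure of the highest-weight space is not needed under the paper's conventions (morphisms are not required to be even, so $L(\la)$ is determined up to isomorphism by its $\mf{h}_\even$-highest weight alone), but it does no harm.
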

\begin{proof}
Since $L(\la)$ is finite-dimensional, $L(\la)$ is a direct sum of irreducible $\gl(m)$-modules with dominant highest weights $\mu$ such that $\la-\mu\in\sum_{\alpha\in\Phi^+}\Z_{\ge 0}\alpha$. Thus, the lowest $\gl(m)$-weight in $L(\la)$ is $w_0^{(m)}\la$, and hence $L(\la)^*$ has highest weight $-w_0^{(m)}\la$.
\end{proof}

Recall the supertrace $\text{str}_{V}(f)$ of an endomorphism $f= f_{\bar{0}}+ f_{\bar{1}}$ ($f_{\bar{0}}$ and $f_{\bar{1}}$ are respectively even and odd) on a superspace $V$ is defined by $\text{str}_{V}(f):= \text{tr}_{V_{\bar{0}}}f_{\bar{0}} -\text{tr}_{V_{\bar{1}}}f_{\bar{0}}$. We consider $\G=\bigoplus_{j\in\Z}\G_j$ with the $\Z$-gradation induced from \eqref{Z-gradation}. Recall that a Lie superalgebra homomorphism $\gamma: \G_{0} \rightarrow \mathbb{C}$ is called a semi-infinite character, if $\gamma([X,Y]) = \text{str}_{\G_{0}}(\text{ad}(X)\circ\text{ad}(Y))$, for $X\in \G_{1}, Y\in \G_{-1}$ (cf. \cite[Definition 1.1]{So} and \cite[Section 5]{Br3}). The proof of the following lemma is inspired by the proof of \cite[Lemma 7.4]{So}.

\begin{lem}\label{lem:0:semi}
The trivial character $0:\G_0\rightarrow \C$ is a semi-infinite character for the $\Z$-gradation \eqref{Z-gradation} for $\G$.
\end{lem}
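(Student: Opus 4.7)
The assertion is equivalent to $\text{str}_{\G_0}(\text{ad}(X)\circ\text{ad}(Y)) = 0$ for all homogeneous $X \in \G_1$ and $Y \in \G_{-1}$, the Lie superalgebra homomorphism condition on $\gamma = 0$ being trivial. By bilinearity it suffices to verify this when $X$ and $Y$ are root vectors, i.e., each is of the form $e_{ij}$ or $\bar{e}_{ij}$. I would organize the proof as a case analysis on the parities of $X$ and $Y$, using the explicit structure constants of $\mf q(n)$.

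If exactly one of $X, Y$ is odd, then $\text{ad}(X)\circ\text{ad}(Y)$ is an odd endomorphism of $\G_0$, and its supertrace vanishes automatically. If both $X$ and $Y$ are even, then the bracket formulas $[e_{ab}, e_{cd}] = \delta_{bc}e_{ad} - \delta_{da}e_{cb}$ and $[e_{ab}, \bar{e}_{cd}] = \delta_{bc}\bar{e}_{ad} - \delta_{da}\bar{e}_{cb}$ share identical structure constants, differing only by whether the output is an $e$- or $\bar{e}$-element. Hence $\text{ad}(X)\circ\text{ad}(Y)$ preserves both $(\G_0)_{\bar{0}} = \text{span}\{e_\gamma\}$ and $(\G_0)_{\bar{1}} = \text{span}\{\bar{e}_\gamma\}$, and acts on them by the same matrix under the parity-swap bijection $e_\gamma \leftrightarrow \bar{e}_\gamma$. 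The traces on the two summands coincide, so the supertrace vanishes.

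The essential case is when $X = \bar{e}_{ij} \in \G_1$ and $Y = \bar{e}_{kl} \in \G_{-1}$ are both odd. Here the parity-swap symmetry breaks, because the formulas $[\bar{e}_{ab}, e_{cd}] = -\delta_{da}\bar{e}_{cb} + \delta_{bc}\bar{e}_{ad}$ and $[\bar{e}_{ab}, \bar{e}_{cd}] = \delta_{bc}e_{ad} + \delta_{da}e_{cb}$ have different sign patterns. Applying these brackets twice yields, for $(a,b)$ in a Levi block, four-term expressions for $\text{ad}(X)\text{ad}(Y)e_{ab}$ and for $\text{ad}(X)\text{ad}(Y)\bar{e}_{ab}$; under the matching $e \leftrightarrow \bar{e}$, two of the four pairs of corresponding terms agree in sign while the other two disagree. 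The sign-disagreeing pairs contribute to the diagonal entries only when the Kronecker deltas force $i = j$, which is precluded by $X \in \G_1$ placing $i$ and $j$ in distinct $\Z$-graded blocks of $\mf l^\rl$. The sign-agreeing pairs contribute identically to the traces on $(\G_0)_{\bar 0}$ and $(\G_0)_{\bar 1}$, so the supertrace again vanishes. The main obstacle is carrying out this last computation cleanly; its resolution rests on the observation that the $\Z$-grading hypothesis precisely removes the terms that would otherwise have spoiled the even/odd symmetry.
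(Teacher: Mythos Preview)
Your argument is correct, and the route is genuinely different from the paper's. Both proofs share the initial reduction: mixed-parity terms vanish because an odd endomorphism has zero supertrace, and the even-even case vanishes because $(\G_0)_{\bar 0}$ and $(\G_0)_{\bar 1}$ are isomorphic $\G_{\bar 0}$-modules via the parity swap $e_\gamma\leftrightarrow\bar e_\gamma$. The divergence is in the odd-odd case. The paper, following Soergel, first uses the identity $\text{str}_{\G_0}(\text{ad}[A,X]\circ\text{ad}Y)=\text{str}_{\G_0}(\text{ad}X\circ\text{ad}[Y,A])$ for $A\in(\G_0)_{\bar 0}$ to reduce to the case where $X$ is a \emph{simple} odd root vector; then it decomposes $\G_0=\mf n_0^+\oplus\mf h\oplus\mf n_0^-$, observes that simplicity of the root kills the nilradical contributions, and finishes with a short explicit computation on $\mf h$ alone. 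Your approach instead computes directly with arbitrary odd root vectors $\bar e_{ij},\bar e_{kl}$ and the full basis of $\G_0$, tracking which of the four terms in $\text{ad}(\bar e_{ij})\text{ad}(\bar e_{kl})$ match under the parity swap. Your method is more elementary (no reduction lemma, no triangular decomposition) at the cost of more bookkeeping; the paper's method localizes the computation to $\mf h$ but needs the Soergel-style conjugation trick to get there.

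One small imprecision: you write that the sign-disagreeing pairs contribute to the diagonal only when the Kronecker deltas force $i=j$. In fact, of the two sign-disagreeing terms, one forces $i=j$ and the other forces $k=l$. Both are excluded---the first because $\bar e_{ij}\in\G_1$ and the second because $\bar e_{kl}\in\G_{-1}$---so the conclusion stands, but your sentence should mention both.
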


\begin{proof}Let $X= X_{\bar{0}}+ X_{\bar{1}}$ and $Y= Y_{\bar{0}}+ Y_{\bar{1}}$  with $X_{\bar i}\in (\mf{g}_{1})_{\bar{i}},Y_{\bar i} \in (\mf{g}_{-1})_{\bar{i}}$ for $i =0,1$. We first note that $\text{str}_{\mf{g}_{0}}(\text{ad}X \circ \text{ad}Y) = \text{str}_{\mf{g}_{0}}(\text{ad}X_{\bar{0}} \circ \text{ad}Y_{\bar{0}}) + \text{str}_{\mf{g}_{0}}(\text{ad}X_{\bar{1}} \circ \text{ad}Y_{\bar{1}}) =\text{str}_{\mf{g}_{0}}(\text{ad}X_{\bar{1}} \circ \text{ad}Y_{\bar{1}})$, since $\mf{g}_{\bar{0}}$ and $\mf{g}_{\bar{1}}$ are isomorphic as $\mf{g}_{\bar{0}}$-modules. Thus, we may assume that $X\in (\G_1)_{\bar{1}}$, $Y \in (\G_{-1})_{\bar{1}}$.

Next, observe that, for each $A\in (\G_{0})_{\bar{0}}$, we have
 \[
 \text{str}_{\G_0}(\text{ad}[A,X]\circ \text{ad}Y) = \text{str}_{\G_0}(\text{ad}A\circ \text{ad}X\circ \text{ad}Y - \text{ad}X\circ \text{ad}A\circ \text{ad}Y) \]
 \[= \text{str}_{\G_0}(\text{ad}X\circ \text{ad}Y\circ \text{ad}A - \text{ad}X\circ \text{ad}A\circ \text{ad}Y)
 = \text{str}_{\G_0}(\text{ad}X\circ \text{ad}[Y,A]).\]
  Furthermore, since  $\G_{1}$ is a semisimple $\text{ad}(\G_{0})_{\bar{0}}$-module generated by root vectors of simple roots, it suffices to show that
 \begin{align} \label{EqStrIsZero}
 \text{str}_{\G_0}(\text{ad}X_{\alpha}\circ \text{ad}Y_{\beta}) =0,
 \end{align}
 for all $X_{\alpha} \in \G_{\alpha} \cap (\G_1)_\odd, Y_{\beta}\in \G_{\beta}\cap (\G_{-1})_\odd$ with $\alpha\in \Pi \setminus \Pi^{\rl}, \beta\in \Phi$.

Note that if $\alpha+\beta \neq 0$ then $(\text{ad}X_{\alpha}\circ \text{ad}Y_{\beta})(\G_{\gamma}) \subseteq  \G_{\alpha+\beta+\gamma} \neq \G_{\gamma}$ and so \eqref{EqStrIsZero} holds. Therefore we may assume that $\beta =- \alpha$.

Consider the triangular decomposition $\G_0=\mf{n}_{0}^+ \oplus \mf{h} \oplus \mf{n}_{0}^-$ of $\G_0$, with $\mf{n}_{0}^+:= \oplus_{\eta\in \Phi^+}(\G_0)_{\eta}$ and $\mf{n}_{0}^-:= \oplus_{\eta\in \Phi \setminus \Phi^+}(\G_0)_{\eta}$. Let $\Phi(\mf{n}_{0}^+)$ and $\Phi(\mf{n}_{0}^-)$ be the sets of roots of $\mf{n}_{0}^+$ and $\mf{n}_{0}^-$, respectively. Note that $\mf{n}_{0}^+$, $\mf{h}$ and $\mf{n}_{0}^-$ are stable under $\text{ad}X_{\alpha}\circ \text{ad}Y_{-\alpha}$. Furthermore,
 \begin{align*}
 \text{ad}X_{\alpha}(\mf{n}_{0}^-) \subset \G_{\alpha+\Phi(\mf{n}_{0}^-)}=0, \ \
 \text{ad}Y_{-\alpha}(\mf{n}_{0}^+) \subset \G_{-\alpha+\Phi(\mf{n}_{0}^+)}=0.
\end{align*}
Therefore we have
\begin{align*}
 \text{str}_{\G_0}(\text{ad}X_{\alpha}\circ \text{ad}Y_{-\alpha}) =  \text{str}_{\mf{h}}(\text{ad}X_{\alpha}\circ \text{ad}Y_{-\alpha}) +  \text{str}_{\mf{n}_{0}^-}(\text{ad}X_{\alpha}\circ \text{ad}Y_{-\alpha})  \\
 = \text{tr}_{\mf{h}_{\bar{0}}}(\text{ad}X_{\alpha}\circ \text{ad}Y_{-\alpha}) -  \text{tr}_{\mf{h}_{\bar{1}}}(\text{ad}X_{\alpha}\circ \text{ad}Y_{-\alpha}) +  \text{str}_{\mf{n}_{0}^-}(\text{ad}[X_{\alpha},Y_{-\alpha}]).
 \end{align*}
 Note that $[X_{\alpha},Y_{-\alpha}]\in \mf{h}_{\bar{0}}$ and so $\text{str}_{\mf{n}_{0}^-}(\text{ad}[X_{\alpha},Y_{-\alpha}]) =0$ since there is a natural isomorphisms between $(\mf{n}_{0}^-)_{\bar{0}}$ and $(\mf{n}_{0}^-)_{\bar{1}}$  as $\mf{h}_{\bar{0}}$-modules.

 Let $\pi: \mf{h}_{\bar{0}} \rightarrow \mf{h}_{\bar{1}}$ be the linear isomorphism defined by $\pi(e_{ii}) =\overline{e}_{ii}$, for $1\leq i \leq n$. Note that \begin{align*}\text{ad}X_{\alpha}\circ \text{ad}Y_{-\alpha}(h_{\bar{0}}) =  \alpha(h_{\bar{0}})  [X_{\alpha}, Y_{-\alpha}], \ \ \text{ad}X_{\alpha}\circ \text{ad}Y_{-\alpha}(h_{\bar{1}}) =  \overline{\alpha}(\pi(h_{\bar{1}}))  [X_{\alpha}, Y_{-\alpha}],\end{align*} for $i\in \{\bar{0},\bar{1}\}$ and $h_{i} \in \mf{h}_{i}$. It follows that $\text{tr}_{\mf{h}_{\bar{0}}}(\text{ad}X_{\alpha}\circ \text{ad}Y_{-\alpha})= \text{tr}_{\mf{h}_{\bar{1}}}(\text{ad}X_{\alpha}\circ \text{ad}Y_{-\alpha}) = 0$. This completes the proof.
\end{proof}

 Lemma \ref{lem:0:semi}, together with \cite[Theorem 6.4]{Br3} (c.f.~\cite[Theorem 5.12]{So}) and Lemma \ref{lem:dual:hwt}, implies the following tilting module version of the BGG reciprocity.
\begin{cor}\label{cor:tilting} For $\la ,\mu \in \La_{k,\zeta}^\rl$, we have
{\em \begin{align*}
\left(U(\la):K(\mu)\right) = [K(-w^\rl_0\mu):L(-w^\rl_0\la)].
\end{align*}}
\end{cor}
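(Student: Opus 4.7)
The plan is to extract this identity directly from the tilting-module version of BGG reciprocity proved in \cite[Theorem 6.4]{Br3}, the Lie-superalgebra analogue of Soergel's Arkhipov-type duality \cite[Theorem 5.12]{So}. That result asserts that, for a $\Z$-graded Lie superalgebra equipped with a semi-infinite character on the degree-zero subalgebra, the multiplicity of a standard (parabolic Verma) module in a tilting filtration is computed by a composition multiplicity in a dually-labelled standard module. Applied to our grading \eqref{Z-gradation} with $\G_0=\mf l^\rl$, the standard objects are exactly the parabolic Vermas $K(\mu)=\text{Ind}_{\mf p^\rl}^\G L^0(\mu)$ and the tilting objects are the $U(\la)$ introduced in this section, so Brundan's formula will yield an identity of the desired shape up to an identification of the duality on highest weights.

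The first step is to verify the hypothesis of \cite[Theorem 6.4]{Br3}, namely the existence of a semi-infinite character on $\G_0=\mf l^\rl$; this is exactly the content of Lemma \ref{lem:0:semi}, which shows that the trivial character $0:\G_0\to\C$ is semi-infinite for the $\Z$-grading \eqref{Z-gradation}. The second step is to identify the dualization on weights that appears on the right-hand side of Brundan's reciprocity with the map $\la\mapsto -w_0^\rl\la$. Since $K(\mu)$ is induced from the finite-dimensional irreducible $\mf l^\rl$-module $L^0(\mu)$ and $\mf l^\rl=\bigoplus_{c=1}^s\mf q(r_c)\oplus\bigoplus_{d=1}^t\mf q(l_d)$ with the longest element factoring accordingly as $w_0^\rl=\prod_c w_0^{(r_c)}\cdot\prod_d w_0^{(l_d)}$, I would apply Lemma \ref{lem:dual:hwt} factor by factor to the tensor decomposition of $L^0(\mu)$ to conclude $L^0(\mu)^*\cong L^0(-w_0^\rl\mu)$. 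Inducing back up, the dualized standard module that appears in Brundan's formula is precisely $K(-w_0^\rl\mu)$, while the irreducible head $L(\la)$ is replaced by $L(-w_0^\rl\la)$, giving exactly the claimed identity.

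The main obstacle is purely bookkeeping: confirming that $U(\la)$ and $K(\mu)$ are indeed the tilting and standard objects to which \cite[Theorem 6.4]{Br3} applies in the parabolic subcategory $\mc F_{k,\zeta}$, and that $\mc F_{k,\zeta}$ arises as a sum of blocks inside the ambient graded BGG category for which Brundan's theory is formulated. This is arranged by combining the $\Z$-grading \eqref{Z-gradation} with the parabolic setup of Section \ref{sec:par:cat} and the construction of $U(\la)$ recalled at the start of this section, so once the semi-infinite character (Lemma \ref{lem:0:semi}) and the highest-weight duality (Lemma \ref{lem:dual:hwt}) are in hand, the corollary follows by direct substitution into Brundan's reciprocity.
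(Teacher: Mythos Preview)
Your proposal is correct and follows essentially the same argument as the paper: the corollary is deduced directly from \cite[Theorem 6.4]{Br3} (cf.~\cite[Theorem 5.12]{So}), with Lemma \ref{lem:0:semi} supplying the required semi-infinite character and Lemma \ref{lem:dual:hwt} (applied factorwise to $\mf l^\rl$) identifying the duality on highest weights as $\la\mapsto -w_0^\rl\la$. The paper's proof is simply the one-sentence citation of these three inputs, and your write-up unpacks exactly that.
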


\section{Formulation of the Kazhdan-Lusztig conjecture in $\mc F_{k,\zeta}$}  \label{FormulationOfPKL}
Let $k,n\in\Z_{\ge 0}$ with $k\leq n$ and $\zeta \in \mathbb{C}\backslash \hf\Z$ as before. In \cite[Conjecture 5.10]{CKW} a Kazhdan-Lusztig type conjecture for $\mc{O}_{k,\zeta}$ was formulated in terms of canonical basis of $\TT^{m|n}$. In this section we formulate a parabolic version of the conjecture for $\mc{F}_{k, \zeta}$ in terms of canonical basis of $\mc E^{k|n-k}$.

We identify  $\La_{k,\zeta}$ with $\mathbb{Z}^{k|n-k}$ as follows:
For $\la \in \La_{k,\zeta}$,  we define $f_{\la} \in \mathbb{Z}^{k|n-k}$ by
  \begin{align}\label{aux:fns} f_{\la}(i) =
   \left\{ \begin{array}{ll}  \la_{i+k+1}- \zeta, \text{ if }  -k \leq i \leq -1, \\
  -(\la_{i+k}+ \zeta), \text{ if } 1 \leq i \leq n-k.
 \end{array} \right.
  \end{align}
This gives a bijection between $\La_{k,\zeta}$ and $\mathbb{Z}^{k|n-k}$, and furthermore under this bijection various definitions correspond, e.g., $\sharp f_{\la} = \sharp \la$. Also,  for a given $\mu \in \La_{k,\zeta}$, we let $\la \preceq \mu$ if $f_{\la} \preceq f_{\mu}$. Note that $\la \preceq \mu$ implies $\la \leq \mu$, for all $\la,\mu \in\La_{k,\zeta}$. Under this bijection the set $\La_{k,\zeta}^+$ is sent to $\mathbb{Z}^{k|n-k}_+$ so that we can identity these two sets.

Recall the canonical and dual canonical bases in Section \ref{SectionBases}.  For $\la, \mu \in \La^+_{k,\zeta}$, we define $\ell_{\la,\mu}(q): =\ell_{f_{\la}, g_{\mu}}(q)$ and $u_{\la,\mu}(q): =u_{f_{\la}, g_{\mu}}(q)$, where $\ell_{g,f}(q)$ and $u_{g,f}(q)$ are as in Theorem \ref{KL-Lemma}. We have the following parabolic version of \cite[Conjecture 5.10]{CKW} for $\mc F_{k,\zeta}$, whose proof will be given in Section \ref{ProofOfMainThm}.

\begin{thm} \label{PKLConjecture} For $\la \in \La_{k,\zeta}^+$, we have
\begin{align*} [U(\la)] = \sum_{\mu \preceq \la, \mu\in \La_{k,\zeta}^+} u_{\mu\la}(1)[K(\mu)],\\ [L(\la)] = \sum_{\mu \preceq \la,\mu\in \La_{k,\zeta}^+} \ell_{\mu\la}(1)[K(\mu)]. \end{align*}
\end{thm}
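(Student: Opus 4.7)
The overall strategy is to follow Brundan's approach in \cite{Br1}, adapted to the parabolic category $\mc F_{k,\zeta}$, by categorifying Procedure \ref{Br1Procedure} within $\mc F_{k,\zeta}$. Concretely, consider the linear map
\[ \Psi:[\mc F_{k,\zeta}]\otimes_\Z\C \longrightarrow \mc E^{k|n-k}\big|_{q=1}, \qquad [K(\la)]\mapsto K_{f_\la}, \]
and aim to prove $\Psi([U(\la)])=U_{f_\la}|_{q=1}$ and $\Psi([L(\la)])=L_{f_\la}|_{q=1}$. Because Theorem \ref{KL-Lemma} characterizes $U_f$ and $L_f$ uniquely by bar-invariance together with an upper-unitriangular expansion in the standard monomials, the problem reduces to producing the correct categorical lift.

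First, for each $a\in\Z$ I would construct translation functors $\mc X_a,\mc Y_a$ on $\mc F_{k,\zeta}$ categorifying $\widehat X_a,\widehat Y_a$ of Procedure \ref{Br1Procedure}. These are defined by tensoring with the natural $\mf q(n)$-module $\C^{n|n}$ (and its dual) and projecting onto an appropriate generalized central character block. The first technical step is to verify that on a parabolic Verma $K(\mu)\in\mc F_{k,\zeta}$ the functor $\mc X_a$ admits a $K$-filtration whose subquotients match the expansion of $\widehat X_aK_{f_\mu}$ predicted by Lemma \ref{LemmaForProcedure}, including the extra $K_{f_\mu-d_i+d_j}$ appearing in the atypicality-changing case. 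Since $\mc X_a,\mc Y_a$ are biadjoint exact endofunctors arising from tensoring with a finite-dimensional module, they preserve the subcategory of modules with a $K$-filtration and send tilting modules to direct sums of tiltings.

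The main induction runs Procedure \ref{Br1Procedure} backwards, starting from a typical weight $\mu\in\La^+_{k,\zeta}$ at which $U(\mu)=K(\mu)$ and $\Psi([U(\mu)])=U_{f_\mu}$ holds trivially. One then builds every $U(\la)$ by applying a succession of $\mc X_a$'s, using Lemma \ref{LemmaForProcedure}: at each step $\widehat X_a U_{f_\nu}=U_{f_\la}$ on the Fock space side, corresponding categorically to $U(\la)$ being an indecomposable summand of $\mc X_a U(\nu)$. In the atypicality-preserving case, $\mc X_a U(\nu)$ is itself indecomposable and equals $U(\la)$; in the atypicality-changing case, $\mc X_a U(\nu)$ splits off additional copies of ``smaller'' tilting modules, as demanded by the $(q+q^{-1})$ factor in $\widehat Y_a\widehat X_a U_{f_\nu}$. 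Controlling this decomposition, and in particular isolating the indecomposable summand $U(\la)$ with the correct Verma multiplicities, is exactly where the queer analogue of \cite[Theorem 5.5]{Ser} established in Section \ref{SectionSerganovasFunLem} enters: it produces the short exact sequences required to disentangle the Jordan--H\"older structure. By induction on atypicality this yields $\Psi([U(\la)])=U_{f_\la}|_{q=1}$, which is precisely the first identity of the theorem.

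Finally, the second identity follows by inverting the first via the tilting BGG reciprocity of Corollary \ref{cor:tilting}. That reciprocity identifies the transition matrix $\bigl((U(\la):K(\mu))\bigr)$ with the transpose of $\bigl([K(\mu):L(\la)]\bigr)$ after the involution $\la\mapsto -w_0^+\la$. On the Fock space side, the transition matrices between standard monomials and the canonical versus dual canonical bases are mutually inverse up to the same involution and $q\mapsto q^{-1}$ (cf.\ \cite[Theorem 3.6]{Br1}); at $q=1$ this gives $[L(\la)]=\sum_{\mu\preceq\la}\ell_{\mu\la}(1)[K(\mu)]$. The principal obstacle in the whole plan is the atypicality-changing step above: the non-semisimple action of $\h_\odd$ on weight spaces of $\mf q(n)$-modules makes the short exact sequences produced by $\mc X_a$ far more delicate than in the $\mf{gl}(m|n)$ setting, which is why one must first establish the queer Serganova lemma in Section \ref{SectionSerganovasFunLem} before attempting the induction.
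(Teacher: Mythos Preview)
Your overall strategy is the paper's: categorify Procedure~\ref{Br1Procedure} via translation functors on $\mc F_{k,\zeta}$, induct on atypicality to prove that $[U(\la)]\mapsto U_{f_\la}(1)$ under the map $j$ of \eqref{IsoBetweenKandFockSpace}, and then deduce the $L$-formula from tilting BGG reciprocity (Corollary~\ref{cor:tilting}) together with the inversion between canonical and dual canonical bases from \cite{Br1}. But you have misread Lemma~\ref{LemmaForProcedure} at the crucial step. In \emph{both} the atypicality-preserving and the atypicality-changing cases that lemma gives $\widehat X_a U_h = U_f$, a single canonical basis element; the factor $(q+q^{-1})$ you cite appears only in $\widehat Y_a\widehat X_a U_h$. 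Hence (after halving, since the categorical functors decategorify to $2E_a,2F_a$ by Proposition~\ref{PropIsoBetweenKandFockSpace} and Lemma~\ref{TransFunOfTiltings}) one must show that $X_aU(\nu)=U(\la)$ is \emph{indecomposable} at every step, including the atypicality-changing one; there are no ``smaller tilting summands'' to split off.

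Accordingly, the role of the queer Serganova lemma (Theorem~\ref{thm:fund:fact}) is not to disentangle a decomposition but to rule one out, by contradiction. Suppose $X_aU(\nu)=U(\la)\oplus T_2$ with $T_2\neq 0$. Adjunction forces both $Y_aU(\la)$ and $Y_aT_2$ to contain $U(\nu)$ as a summand; since $j([Y_aX_aU(\nu)])=2U_h(1)$ this yields $Y_aU(\la)=U(\nu)$ and hence $[Y_aU(\la):L(\nu)]=1$. On the other hand, for the weight $\mu=\la-\alpha$ produced by Lemma~\ref{LemmaForProcedure}, Theorem~\ref{thm:fund:fact} gives $[K(\la):L(\mu)]\ge 1$ directly, and combined with Corollary~\ref{cor:tilting} also $(U(\la):K(\mu))\ge 1$, whence $[U(\la):L(\mu)]\ge 2$; adjunction then gives $[Y_aL(\mu):L(\nu)]\ge 1$ and so $[Y_aU(\la):L(\nu)]\ge 2$, the desired contradiction. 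Your sketch assembles the correct ingredients but inverts the logic of how the fundamental lemma enters.
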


\section{Serganova's fundamental lemma for $\mc F_{k,\zeta}$} \label{SectionSerganovasFunLem}

Let $k,n\in\Z_{\ge 0}$ with $k\leq n$ and $\zeta \in \mathbb{C}\backslash \hf\Z$ as before. In this section we shall prove the queer Lie superalgebra version of Serganova's fundamental lemma \cite[Theorem 5.5]{Ser}. Such a ``queer'' version for the category $\mc F_{k,\zeta}$ is needed for the purpose of adapting Brundan's proof of his finite-dimensional irreducible character formula for the general linear Lie superalgebra \cite[Theorem 4.37]{Br1} to our setting of queer Lie superalgebra.

Recall that $\ov{\alpha}:=\ep_i+\ep_j$,  for a given $\alpha=\ep_i-\ep_j\in\Phi^+$ (Section \ref{SectionQueerLieSuperalgebra}). We first recall the following lemma of Penkov and Serganova:
\begin{lem}\label{lem:verma:hom} \cite[Proposition 2.1]{PS2} Let $\alpha\in\Phi^+$ and suppose that $(\la,\ov{\alpha})=0$. Then
$$\rm{Hom}_{\G}(\Delta(\la-\alpha),\Delta(\la))\not=0.$$
\end{lem}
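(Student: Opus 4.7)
The plan is to construct an explicit nonzero $\mf b$-equivariant map $I_{\la-\alpha}\to\Delta(\la)$, which by Frobenius reciprocity
\[
\mathrm{Hom}_{\G}\bigl(\Delta(\la-\alpha),\Delta(\la)\bigr) \cong \mathrm{Hom}_{\mf b}\bigl(I_{\la-\alpha},\Delta(\la)\bigr)
\]
yields the required nonzero $\G$-homomorphism. Equivalently, one seeks an $\mf h$-embedding of $I_{\la-\alpha}$ into the $(\la-\alpha)$-weight space of the $\mf n^+$-invariants, $\Delta(\la)^{\mf n^+}_{\la-\alpha}$. Writing $\alpha=\ep_i-\ep_j$ with $i<j$, the hypothesis $(\la,\ov\alpha)=\la_i+\la_j=0$ plays the role of an odd-root atypicality condition, analogous to the isotropic atypicality used in the $\gl(m|n)$ story.

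The key input is the Clifford anticommutation identity $\{\bar e_\alpha,\bar e_{-\alpha}\}=h_i+h_j$, which acts as zero on $I_\la$ by assumption. Consequently $\bar e_\alpha$ annihilates $\bar e_{-\alpha}v$ for every $v\in I_\la$; but since $e_\alpha(\bar e_{-\alpha}v)=(\bar h_i-\bar h_j)v$ need not vanish, a correction is required. In the generic range $\la_i\neq 0$ the natural ansatz is
\[
w_v := (\la_i-\la_j)\,\bar e_{-\alpha}v \;-\; e_{-\alpha}(\bar h_i-\bar h_j)v \;\in\; \Delta(\la)_{\la-\alpha},
\]
which, using the identities $[e_\alpha,e_{-\alpha}]=h_i-h_j$, $[\bar e_\alpha,e_{-\alpha}]=\bar h_i-\bar h_j$, $[\bar e_\alpha,\bar h_i-\bar h_j]=0$, and $(\bar h_i-\bar h_j)^2=h_i+h_j$, is readily checked to satisfy $e_\alpha w_v=\bar e_\alpha w_v=0$. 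The degenerate case $\la_i=\la_j=0$ is handled separately by taking $w_v=\bar e_{-\alpha}v$ directly, whose singularity against both $e_\alpha$ and $\bar e_\alpha$ is immediate.

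The remaining step is to verify that $w_v$, possibly after correction by terms in higher PBW degrees $U(\mf n^-)_{>1}I_\la$, is annihilated by every other positive root vector $e_\beta,\bar e_\beta$ with $\beta\in\Phi^+\setminus\{\alpha\}$, and that the assignment $v\mapsto w_v$ is $\mf h$-equivariant with image admitting $I_{\la-\alpha}$ as a quotient. Roots $\beta$ whose support is disjoint from $\{i,j\}$ act trivially by PBW and $\mf n^+I_\la=0$; for $\beta$ sharing an index with $\alpha$ one uses the structure constants $[e_{kl},\bar e_{mn}]=\delta_{lm}\bar e_{kn}-\delta_{kn}\bar e_{ml}$ (and analogous anticommutator formulas for two odd generators) in a case analysis. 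The main obstacle, in my view, is exactly this verification: direct computation shows that the leading ansatz $w_v$ is \emph{not} by itself annihilated by every such $e_\beta$, so further correction terms coming from the higher PBW filtration of $U(\mf n^-)I_\la$ must be introduced. A conceptually cleaner way to bypass the bookkeeping would be to exploit the Shapovalov form on $\Delta(\la)$, whose degeneracy at weight $\la-\alpha$ is forced by $(\la,\ov\alpha)=0$, and to extract the desired singular vector as a radical element — at the cost of losing the explicit formula for $w_v$.
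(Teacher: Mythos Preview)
The paper does not prove this lemma at all: it is quoted as \cite[Proposition~2.1]{PS2} and used as a black box, so there is no in-paper argument to compare your proposal against.

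Regarding your sketch itself: the verification that $e_\alpha w_v=\bar e_\alpha w_v=0$ under the hypothesis $\la_i+\la_j=0$ is correct, and when $\alpha$ is \emph{simple} this already suffices, since $\Delta(\la)_{\la-\alpha+\beta}=0$ for every other simple root $\beta$. For non-simple $\alpha$ the gap you flag is genuine. Concretely, take $\alpha=\delta_i-\delta_j$ with some $l$ satisfying $i<l<j$ and set $\beta=\delta_i-\delta_l$; a direct computation gives
\[
e_\beta w_v \;=\; 2\la_j\,\bar e_{jl}v \;+\; e_{jl}(\bar h_i-\bar h_j)v \;\in\;\Delta(\la)_{\la-\delta_l+\delta_j},
\]
and the two summands are linearly independent in $U(\mf n^-)\otimes I_\la$, so this is nonzero whenever $\la_j\neq 0$. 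Your proposal to absorb such terms by corrections from $U(\mf n^-)_{>1}I_\la$ is not unreasonable (degree-two PBW monomials of weight $\la-\alpha$ do exist once intermediate indices are present), but you have not shown the resulting correction procedure closes, and that is exactly where the content of the lemma lies. The Shapovalov-form alternative is cleaner in principle but presupposes the determinant formula for $\mf q(n)$ (Gorelik), itself a substantial external input. A further loose end you only allude to: your construction is an odd map out of $I_\la$, not $I_{\la-\alpha}$, and when $\la_i=1$ one has $\ell(\la-\alpha)=\ell(\la)-2$, so identifying the image with (a module admitting) $I_{\la-\alpha}$ needs its own argument.
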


The following theorem and its proof are inspired by \cite[Theorem 5.5]{Ser}.

\begin{thm}\label{thm:fund:fact}
Let  $\la\in\La^+_{k,\zeta}$. Suppose that $\alpha\in\Phi^+$ such that $(\la,\ov{\alpha})=0$ and $\la-\alpha\in\La^+_{k,\zeta}$. Then
\begin{align*}
\rm{Hom}_\G\left(K(\la-\alpha),K(\la)\right)\not=0.
\end{align*}
In particular, $[K(\la):L(\la-\alpha)]\not=0$.
\end{thm}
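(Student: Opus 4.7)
The plan is to descend the Verma-to-Verma homomorphism of Lemma~\ref{lem:verma:hom} to the parabolic quotients, using the universal property of the parabolic Verma modules in $\mc F_{k,\zeta}$ together with a weight-space analysis driven by the $\Z$-gradation~\eqref{Z-gradation}. A preliminary observation is crucial: the standing hypothesis $\zeta\not\in\hf\Z$ forces $\alpha\in\Phi^+(\mf u^{\rl})$ for the maximal-parabolic data $\rl=((k),(n-k))$. Indeed, if $\alpha=\delta_i-\delta_j$ with $i,j$ in the same block of $\mf l^{\rl}$, then $\la_i+\la_j\in\pm 2\zeta+\Z$ cannot vanish; hence $(\la,\ov\alpha)=\la_i+\la_j=0$ forces $i\le k<j$.

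Let $\phi:\Delta(\la-\alpha)\to\Delta(\la)$ be the nonzero $\G$-homomorphism supplied by Lemma~\ref{lem:verma:hom}, and set $\psi:=p_\la\circ\phi$, where $p_\la:\Delta(\la)\twoheadrightarrow K(\la)$ is the canonical projection. First I would verify that $\psi$ factors through $K(\la-\alpha)$. The kernel of $\Delta(\la-\alpha)\twoheadrightarrow K(\la-\alpha)$ is the $\G$-submodule generated by the maximal $\mf l^{\rl}$-submodule of $\Delta^0(\la-\alpha)$, and since $K(\la)\in\mc F_{k,\zeta}$ is $\mf l^{\rl}$-semisimple with composition factors of $\La^{\rl}_{k,\zeta}$-highest weights, any $\mf l^{\rl}$-submodule of $K(\la)$ generated by a highest-weight space of weight $\la-\alpha$ must be zero or isomorphic to $L^0(\la-\alpha)$; in either case the image of the maximal $\mf l^{\rl}$-submodule of $\Delta^0(\la-\alpha)$ vanishes, establishing the factorization.

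The principal obstacle is to show $\psi\ne 0$. Under the PBW identification $\Delta(\la)\cong U(\mf u^{\rl,-})\otimes\Delta^0(\la)$ arising from~\eqref{Z-gradation}, a block-by-block weight count---using that each $\mf u^{\rl,\pm}$-root carries net charge $\pm 1$ in a single first-block coordinate and $\mp 1$ in a single second-block coordinate, while $\mf l^{\rl}$-roots contribute only within-block cancellations---yields
\begin{align*}
\Delta(\la)_{\la-\alpha}=\bigoplus_{\beta\in\Phi^+(\mf u^{\rl}),\ \alpha-\beta\in\Z_{\ge 0}\Phi^+(\mf l^{\rl})}\mf u^{\rl,-}_{-\beta}\otimes\Delta^0(\la)_{\la-(\alpha-\beta)},
\end{align*}
on which $p_\la$ acts as $\mathrm{id}\otimes q$ with $q:\Delta^0(\la)\twoheadrightarrow L^0(\la)$. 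I would then analyze the extremal component $\beta=\alpha$ (where $\Delta^0(\la)_\la=I_\la$ maps isomorphically to $L^0(\la)_\la$ under $q$) together with the lower-order corrections at $\beta\ne\alpha$: the explicit Penkov--Serganova singular vector construction, enabled by the atypicality $(\la,\ov\alpha)=0$, produces a vector whose $\beta=\alpha$ component is a nonzero element of $\mf u^{\rl,-}_{-\alpha}\otimes I_\la$, while the dominance gaps $\la_p-\la_{p+1}\ge 1$ and $\la_{q-1}-\la_q\ge 1$ (with $\alpha=\delta_p-\delta_q$) forced by $\la-\alpha\in\La^{\rl}_{k,\zeta}$ place each $\la-(\alpha-\beta)$ strictly above the singular weights $s_i\cdot\la$ generating the maximal $\mf l^{\rl}$-submodule of $\Delta^0(\la)$, ensuring $q$ is injective on the relevant weight spaces and that the leading term survives. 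Hence $\psi(v_{\la-\alpha})\ne 0$, yielding the desired nonzero $\bar\psi:K(\la-\alpha)\to K(\la)$; the statement $[K(\la):L(\la-\alpha)]\ne 0$ then follows since $L(\la-\alpha)$ is the head of $K(\la-\alpha)$.
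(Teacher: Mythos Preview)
Your overall strategy---lift the Penkov--Serganova singular vector $v_{\la-\alpha}\in\Delta(\la)$ and show it survives in the quotient $K(\la)$, then factor through $K(\la-\alpha)$---is exactly the paper's. The divergence, and the gap, is entirely in the step ``$\psi\ne 0$''.

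Your auxiliary weight bound (the claim that each $\la-(\alpha-\beta)$ lies strictly above the singular weights of $\Delta^0(\la)$) is false for $\mf q$. The linkage in the strongly typical $\mf l^\rl$-block of $\la$ is governed by the \emph{linear} Weyl action: the super $\rho$-vector for $\mf q$ vanishes, so the highest weights generating $I^0(\la)$ are $s_i(\la)=\la-(\la_i-\la_{i+1})\alpha_i$, not the $\gl$-style $s_i\cdot\la=\la-(\la_i-\la_{i+1}+1)\alpha_i$. With the correct action your inequality can fail. For instance take $k=3$, $\alpha=\delta_1-\delta_4$, and $\la$ with $\la_1-\la_2=2$, $\la_2-\la_3=1$, $\la_1+\la_4=0$; then $\la,\la-\alpha\in\La^+_{k,\zeta}$, but for $\beta=\delta_3-\delta_4$ one has $\alpha-\beta=\alpha_1+\alpha_2$ and $\la-(\alpha-\beta)=s_2(\la)-\alpha_1\le s_2(\la)$, so $I^0(\la)_{\la-(\alpha-\beta)}\ne 0$ and $q$ is \emph{not} injective on that weight space. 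Hence $p_\la$ is not injective on $\Delta(\la)_{\la-\alpha}$, and you cannot conclude $\psi(v_{\la-\alpha})\ne 0$ from injectivity alone.

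That still leaves your claim that the $\beta=\alpha$ component of $v_{\la-\alpha}$ in $\mf u^{\rl,-}_{-\alpha}\otimes I_\la$ is nonzero; if true, this alone would finish (since $q|_{I_\la}$ is an isomorphism). But you only gesture at ``the explicit Penkov--Serganova construction'', and unlike the $\gl(m|n)$ case, where the odd singular vector is simply $e_{-\alpha}v_\la$, the $\mf q(n)$ construction is not a one-term formula, so a nonvanishing leading term needs an actual argument. The paper bypasses both issues: it argues by contradiction, using (via the Frisk--Mazorchuk equivalence) that any $\mf b_\even$-singular weight occurring in $\text{Ind}^\G_{\mf p} I^0(\la)$ has the form $w(\la)-\gamma$ with $w\in\mf S_k\times\mf S_{n-k}$ and $\gamma\in\Z_{\ge 0}\Phi^+(\mf u)$; writing $\alpha=\eta+\gamma$ with $\eta=\la-w(\la)$, a short case analysis on the three possible shapes of $\eta$ forces $(\la-\alpha,\delta_i-\delta_s)=0$ for some $i<s$ within a single block, contradicting $\la-\alpha\in\La^+_{k,\zeta}$. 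This needs neither the explicit form of $v_{\la-\alpha}$ nor any injectivity of $q$ on lower weight spaces.
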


\begin{proof} In this proof we shall respectively denote  $\mf p^\rl$, $\mf l^\rl$ and $\mf u^\rl$ by $\mf p$, $\mf l$ and $\mf u$.

First we have an exact sequence of $\mf l$-modules
\begin{align*}
0\longrightarrow I^0(\la)\longrightarrow \Delta^0(\la)\longrightarrow L^0(\la)\longrightarrow 0,
\end{align*}
where $\Delta^0(\la)$ denotes the $\mf l$-Verma module of highest weight $\la$ (Section \ref{SubsectionZgradations}). This exact sequence trivially extends to an exact sequence of $\mf p$-module by letting $\mf u$ act trivially, and thus we have an exact sequence of $\G$-modules by parabolic induction
\begin{align*}
0\longrightarrow \text{Ind}^\G_{\mf p}I^0(\la)\longrightarrow \Delta(\la)\longrightarrow K(\la)\longrightarrow 0.
\end{align*}
By Lemma \ref{lem:verma:hom} we have
\begin{align*}
\text{Hom}_{\G}(\Delta(\la-\alpha),\Delta(\la))\not=0,
\end{align*}
and thus there exists a non-zero $\mf b$-singular vector $v_{\la-\alpha}\in\Delta(\la)$. It suffices to show that $v_{\la-\alpha}\not\in \text{Ind}^\G_{\mf p}I^0(\la)$.

Suppose on the contrary that $v_{\la-\alpha}\in \text{Ind}^\G_{\mf p}I^0(\la)$. Now $v_{\la-\alpha}$ is of course $\mf b_\even$-singular. We observe that if $\mu\in\h^*$ is the highest weight of a composition factor in $I^0(\la)$, then
\begin{align*}
\mu=w(\la),
\end{align*}
for some $w\in \mf S_k\times\mf S_{n-k}$. This is a direct consequence of \cite[Theorem 1]{FM}, according to which we have an equivalence of categories between strongly typical blocks of  $\mf{q}(k)\oplus\mf{q}(n-k)$-modules and the corresponding blocks of $\mf{gl}(k)\oplus\mf{gl}(n-k)$-modules.

Thus, any weight $\mu$ of a $\mf b_\even$-singular vector in $\text{Ind}^\G_{\mf p}I^0(\la)$ is of the form
\begin{align*}
\mu=w(\la)-\gamma,
\end{align*}
where $\gamma$ is a linear $\Z_{\geq 0}$-combination roots in $\Phi^+(\mf{u})$. Thus, we have
\begin{align*}
\mu=\la-\eta-\gamma,
\end{align*}
where $\eta$ is a $\Z_{\geq 0}$-linear combination of positive roots of $\mf l$. Thus, by assumption we have $\la-\alpha=\la-\eta-\gamma$ and so
\begin{align}\label{aux100}
\alpha=\eta+\gamma.
\end{align}
Now, $\alpha$ is a root in $\mf u$, and so \eqref{aux100} implies that $\gamma\in\Phi^+(\mf u)$, and there are three possibilities for $\eta$:
\begin{align*}
\eta=\begin{cases}\delta_i-\delta_s+\delta_t-\delta_j,\quad 1\le i<s\le k,k+1\le t<j\le n,\cr
\delta_i-\delta_s,\quad 1\le i<s\le k,\cr
\delta_t-\delta_j,\quad k+1\le t<j\le n.
\end{cases}
\end{align*}

Let us first consider the case $\eta=\delta_i-\delta_s$, with $1\le i<s\le k$. Thus, we have $w(\la)=\la-\delta_i+\delta_s$. Now we have $w\in\mf S_k\times\mf S_{n-k}$, and also all the $\la_i$s are distinct, for $1\le i\le k$. Thus, we must have
\begin{align*}
\la_i-1=\la_s.
\end{align*}
Therefore, we have $(\la,\eta)=\la_i-\la_s=1$ and $(\alpha,\eta)=1$, so that we have $(\la-\alpha,\delta_i-\delta_s)=0$. But then $\la-\alpha\not\in\La^+_{k,\zeta}$, which is a contradiction.

By a similar argument, the case $\eta=\delta_t-\delta_j$ with $k+1\le i<s\le n$ leads to a contradiction as well.

Finally, we assume that $\eta = \delta_i-\delta_s+\delta_t-\delta_j$, for some $1\le i<s\le k$ and $ k+1\le t<j\le n$. In this case, we have $\gamma = \delta_s - \delta_t$. Similarly, since each component of $\la$ are distinct, it follows from $w\in \mf S_k\times\mf S_{n-k}$ that $\la_ i -1 = \la_s$ and $\la_t -1 = \la_j$. Therefore, $ (\la,\eta)= \la_i- \la_s+\la_t-\la_j =2$ and $(\alpha,\eta)=2$. Now $(\la-\alpha,\delta_i-\delta_s) +(\la-\alpha,\delta_t-\delta_j) = (\la-\alpha,\eta)=0$, which also leads to $\la-\alpha\not\in\La^+_{k,\zeta}$.
\end{proof}

\section{Proof of the main theorem} \label{ProofOfMainThm}

Let $k,n\in\Z_{\ge 0}$ with $k\leq n$. Recall that $\zeta \in \mathbb{C}\backslash \hf\Z$ is fixed in Section \ref{SectionNotations}, and the free abelian group $P = \oplus_{a\in \mathbb{Z}}\mathbb{Z}\epsilon_a$ is defined in  Section \ref{SectionTheFockSpace}. We let $\mc{F}:=\mc{F}_{k,\zeta}$ in this section.

Let $\text{wt}: \Lambda^+_{k,\zeta} \rightarrow P$ be the weight function defined by (c.f. \cite[Section 2-c]{Br2})
\begin{align*}
\text{wt}(\la)  := \sum_{i=1}^{k}\epsilon_{\la_{i}-\zeta}  - \sum_{i=k+1}^{n}\epsilon_{-(\la_{i}+\zeta)}.
\end{align*}
It is well-known that $\chi_{\la} = \chi_{\mu}$ if and only if $\text{wt}(\la) = \text{wt}(\mu)$ (see, e.g., \cite[Theorem 2.48]{CW}). We have  decomposition $\mc F = \oplus_{\la\in \mf{h}_{\bar{0}}^*} \mc{F}_{\chi_{\la}} = \oplus_{\gamma \in P} \mc{F}_{\gamma}$  according to central characters $\chi_{\la}$ with $\text{wt}(\la) = \gamma$.

 Let $\mathbb{C}^{n|n}$ and $(\mathbb{C}^{n|n})^*$  be the standard representation and its dual, respectively. Denote the projection functor from $\mc{F}$ to $\mc{F}_{\gamma}$ by $\text{pr}_{\gamma}$.  We define the {\em translation functors} $\text{E}_{a}, \text{F}_{a}: \mc{F} \rightarrow \mc{F}$ as follows
\begin{align}
\text{E}_{a} (M):= \text{pr}_{\gamma+(\epsilon_a-\epsilon_{a+1})}(M\otimes (\mathbb{C}^{n|n})^*), \ \
\text{F}_{a} (M):= \text{pr}_{\gamma-(\epsilon_a-\epsilon_{a+1})}(M\otimes \mathbb{C}^{n|n}),
\end{align}
for all  $M\in  \mc{F}_{\gamma}$, $\gamma \in P$ , $a\in \mathbb{Z}$. For each $a\in \mathbb{Z}$ , it is not hard to see that both $\text{E}_{a}$ and $\text{F}_{a}$ are exact and bi-adjoint to each other.
We write $\la \rightarrow_{a} \mu$ if $\la , \mu \in \Lambda^+_{k,\zeta}$ and there exists $1\leq i \leq k$ such that $\la_i  = \mu_i-1 = a+\zeta$  or there exists $k+1 \leq i'\leq n$ such that $\la_{i'} = \mu_{i'}-1 = -a -1 -\zeta $, and in addition, $\la_j = \mu _j$ for all $ j\neq i$ in the former case, for all $j\neq i'$ in the later case. Let $\mc{K}(\mc{F})$ be the Grothedieck group of $\mc{F}$ and denote the element corresponding to $M\in \mc{F}$ by $[M]$.

We have the following lemma \cite[Lemma 4.2]{Ch}.

\begin{lem} \label{chOfKInTranslationFunctor}
Let $\la\in \Lambda^+_{k,\zeta}$. Then both $\emph{E}_{a} K(\la)$ and $\emph{F}_{a} K(\la)$ have flags of parabolic Verma modules and  we have the following formula:
\begin{align*}
[\emph{E}_{a} K(\la)] = 2\sum_{\mu \rightarrow_{a} \la} [K(\mu)], \quad
[\emph{F}_{a} K(\la)] = 2\sum_{\la \rightarrow_{a} \mu} [K(\mu)].
\end{align*}
\end{lem}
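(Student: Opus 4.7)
The plan is to realise $F_a K(\la)$ and $E_a K(\la)$ as projections of $K(\la)\otimes \C^{n|n}$ and $K(\la)\otimes (\C^{n|n})^*$ respectively, and to compute these tensor products via the standard tensor identity
\begin{align*}
K(\la)\otimes V\cong\text{Ind}^\G_{\mf p^+}\bigl(L^0(\la)\otimes V|_{\mf p^+}\bigr).
\end{align*}
Viewing $\C^{n|n}$ as a $\mf p^+$-module, the subspace $\C^{k|k}$ spanned by the first $k$ coordinates (together with their parity partners) is a submodule, since the nilradical $\mf u^+$ (with roots $\delta_i-\delta_j$, $i\le k<j$) annihilates $\C^{k|k}$ and maps $\C^{n|n}$ into $\C^{k|k}$; the quotient is $\C^{n-k|n-k}$. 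Dualising gives the analogous filtration of $(\C^{n|n})^*$ with sub $(\C^{n-k|n-k})^*$ and quotient $(\C^{k|k})^*$.

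Tensoring this two-step filtration with $L^0(\la)$ and applying the exact functor $\text{Ind}^\G_{\mf p^+}$ produces a filtration of $K(\la)\otimes V$ whose subquotients are $\text{Ind}^\G_{\mf p^+}(L^0(\la)\otimes S)$ with $S\in\{\C^{k|k},\C^{n-k|n-k}\}$ or their duals. Each $L^0(\la)\otimes S$ is a finite-dimensional $\mf l^+$-module with $\mf l^+$-highest weights in $\La^+_{k,\zeta}$, hence completely reducible by Lemma~\ref{CharacterizationOfLocFinModules}. Since $\text{ch}\,\C^{k|k}=2\sum_{i=1}^k e^{\delta_i}$, the character formula of Proposition~\ref{IrrChForLevi} combined with the Weyl-type antisymmetry of $\sum_{w\in\mf S_k}(-1)^{\ell(w)}w(-)$ produces the $\mf l^+$-isomorphisms
\begin{align*}
L^0(\la)\otimes \C^{k|k}\cong 2\!\!\!\!\bigoplus_{\substack{1\le i\le k\\ \la+\delta_i\in\La^+_{k,\zeta}}}\!\!\!\! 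L^0(\la+\delta_i),\quad L^0(\la)\otimes \C^{n-k|n-k}\cong 2\!\!\!\!\bigoplus_{\substack{k<i\le n\\ \la+\delta_i\in\La^+_{k,\zeta}}}\!\!\!\! L^0(\la+\delta_i),
\end{align*}
and the dual versions with $-\delta_i$ replacing $+\delta_i$: non-dominant shifts vanish by the Euler-class antisymmetry, and the overall factor of~$2$ is inherited from $\text{ch}\,\C^{k|k}$. Parabolic induction then turns these into a flag of $K(\la)\otimes V$ by parabolic Vermas $K(\la\pm\delta_i)$, each with multiplicity~$2$.

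It remains to apply the projection defining the translation functor. A direct inspection of \eqref{aux:fns} shows $\text{wt}(\la+\delta_i)=\text{wt}(\la)-(\ep_a-\ep_{a+1})$ exactly when $\la\rightarrow_a(\la+\delta_i)$ and $\text{wt}(\la-\delta_i)=\text{wt}(\la)+(\ep_a-\ep_{a+1})$ exactly when $(\la-\delta_i)\rightarrow_a\la$. Therefore the block projection keeps precisely the parabolic Vermas appearing in the two sums of the statement, so
\begin{align*}
[F_a K(\la)]=2\sum_{\la\rightarrow_a\mu}[K(\mu)],\qquad [E_a K(\la)]=2\sum_{\mu\rightarrow_a\la}[K(\mu)],
\end{align*}
and the filtration inherited from the one on $K(\la)\otimes V$ witnesses the existence of parabolic Verma flags.

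The main obstacle, I expect, lies in the second step: converting the character identity from Proposition~\ref{IrrChForLevi} into a genuine $\mf l^+$-decomposition with the correct factor of~$2$. One must simultaneously track the $\mf S_k\times\mf S_{n-k}$-antisymmetry cancellations that eliminate the shifts $\la\pm\delta_i\notin\La^+_{k,\zeta}$, and verify that the factor~$2$ coming from the Clifford-type structure of $\C^{k|k}$ is realised as a genuine isotypic multiplicity in the semisimple category $\mc{HC}_{k,\zeta}(\mf l^+)$ rather than being absorbed into a larger type-$\texttt{Q}$ component.
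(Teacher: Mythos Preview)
The paper does not supply its own proof of this lemma; it is quoted from \cite[Lemma~4.2]{Ch}. Your argument via the tensor identity $\text{Ind}^\G_{\mf p^+}(L^0(\la))\otimes V\cong\text{Ind}^\G_{\mf p^+}(L^0(\la)\otimes V|_{\mf p^+})$, the two-step $\mf p^+$-filtration of $\C^{n|n}$, and the character computation is the standard one and is correct; it is almost certainly what appears in \cite{Ch} as well.

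Your closing worry about the factor~$2$ is unnecessary. In the paper's conventions odd morphisms are permitted (see Section~\ref{SectionQueerLieSuperalgebra}), so every module is isomorphic to its own parity shift and the simples in $\mc{HC}_{k,\zeta}(\mf l^+)$ are therefore determined up to isomorphism by their characters alone. The identity $\text{ch}\bigl(L^0(\la)\otimes\C^{k|k}\bigr)=2\sum_i\text{ch}\,L^0(\la+\delta_i)$ follows immediately from $\text{ch}\,\C^{k|k}=2\sum_{i=1}^k e^{\delta_i}$, with the prefactor $2^{\lceil n/2\rceil}$ of Proposition~\ref{IrrChForLevi} unchanged on both sides; semisimplicity (Lemma~\ref{CharacterizationOfLocFinModules}) then forces the claimed $\mf l^+$-decomposition, and there is no mechanism by which the~$2$ could be absorbed into a single larger simple.
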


 We defined the $\mathbb{Z}$-form $\mc{E}_{\mathbb{Z}}^{k|n-k}$ of $\mc{E}^{k|n-k}$, namely, $\mc{E}_{\mathbb{Z}}^{k|n-k}: = \mathbb{Z} \otimes_{\mathbb{Z}[q,q^{-1}]}\mc{E}_{\mathbb{Z}[q,q^{-1}]}^{k|n-k}$ by letting $q=1$, where $\mc{E}_{\mathbb{Z}[q,q^{-1}]}^{k|n-k}$ is the $\mathbb{Z}[q,q^{-1}]$-lattice spanned by $\{K_f\}_{f\in \mathbb{Z}^{k|n-k}_+}$, and for given $f\in \La^+_{k,\zeta}$ we let $K_{f}(1):= 1\otimes K_{f}, U_f(1):=1\otimes U_f\in \mc{E}_{\mathbb{Z}}^{k|n-k}$.

Let  $\mc{A}_{k|n-k}^{\Delta}$ be the full subcategory of finite-dimensional modules over the general linear Lie superalgebra $\mf{gl}(k|n-k)$ consisting of objects that have a flag of Kac modules, see \cite[Sections 4-a,b]{Br1}. Recall that $\mc{A}_{k|n-k}^{\Delta}$ is also equipped with translation functors (see e.g., \cite[Section 4-b]{Br1} and \cite[Sections 3.4 and 5.1]{CW08}). Let $ \mc{F}^{\Delta} $ be the full subcategory of $\mc{F}$ of all modules which have a flag of $K(\la)$ with $\la \in \La^+_{k,\zeta}$. Let $\mc K(\mc F^\Delta)$ be the Grothendieck group of $\mc F^\Delta$. Now Lemma \ref{chOfKInTranslationFunctor}, together with \cite[Corollary 4.26 and Theorem 4.28]{Br1}, implies the following proposition that says that the translation functors for $\mc{F}^{\Delta}$ is the same as the translation functors on $\mc{A}_{k|n-k}^{\Delta}$ on the level of Grothendieck groups up to a $2$-factor.

\begin{prop} \label{PropIsoBetweenKandFockSpace}
Let $j:\mc K(\mc F^\Delta)\rightarrow \mc E^{k|n-k}_\Z$ be the $\Z$-isomorphism defined by
\begin{align} \label{IsoBetweenKandFockSpace}
j([\text{K}(\la)]) = \text{K}_{f_{\la}}(1) , \ \  \text{  for } \la\in \Lambda^+_{k,\zeta}.
\end{align} Then the representation theoretically defined functors $\emph{F}_a$ and $\emph{E}_{a}$ on $\mc{F}$ decategorify to the Chevalley generators $2F_a$ and $2E_a$ of $\UU_{q}(\mf{gl}_{\infty})|_{q=1}$ on $\mc{E}_{\mathbb{Z}}^{k|n-k}$.
\end{prop}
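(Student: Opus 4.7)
The plan is to prove the proposition in two pieces: first, that $j$ is a bijection of free $\Z$-modules, and second, that it intertwines the translation-functor action with twice the Chevalley-generator action. For the first piece, the set $\{[K(\la)]\mid\la\in\La^+_{k,\zeta}\}$ is a $\Z$-basis of $\mc K(\mc F^\Delta)$ because every object of $\mc F^\Delta$ admits a $K(\la)$-flag by definition, and parabolic Vermas of distinct highest weight are linearly independent in the Grothendieck group (their $\h_\even^*$-characters are distinct, as each $K(\la)$ has top weight $\la$). On the Fock-space side, $\{K_f(1)\mid f\in\Z^{k|n-k}_+\}$ is a $\Z$-basis of $\mc E^{k|n-k}_\Z$ by construction in Section \ref{SectionTheFockSpace}. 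Since $\la\mapsto f_\la$ is the bijection $\La^+_{k,\zeta}\to\Z^{k|n-k}_+$ recorded in Section \ref{FormulationOfPKL}, the map $j$ sends basis to basis and is therefore a $\Z$-isomorphism.

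For the second piece, I would first observe that $E_a$ and $F_a$ preserve $\mc F^\Delta$: they are exact, and Lemma \ref{chOfKInTranslationFunctor} shows that they send each parabolic Verma to a module with a parabolic Verma flag, so by exactness any object of $\mc F^\Delta$ is sent to one with a parabolic Verma flag. Hence $E_a, F_a$ descend to $\Z$-linear endomorphisms of $\mc K(\mc F^\Delta)$. Applying $j$ to the formula of Lemma \ref{chOfKInTranslationFunctor} gives
\[
j\bigl([E_aK(\la)]\bigr)=2\!\!\sum_{\mu\to_a\la}\!\!K_{f_\mu}(1),\qquad j\bigl([F_aK(\la)]\bigr)=2\!\!\sum_{\la\to_a\mu}\!\!K_{f_\mu}(1).
\]
What remains is to show that the sums appearing here coincide (without the factor $2$) with $E_aK_{f_\la}(1)$ and $F_aK_{f_\la}(1)$ respectively. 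This is exactly the content of the $q=1$ specialization of the wedge-space action in \cite[Corollary 4.26 and Theorem 4.28]{Br1}, which can also be read off directly from the coproduct $\Delta(F_a)=F_a\otimes 1+K_{a,a+1}\otimes F_a$ applied iteratively to a standard monomial, specialized to $q=1$ so that the $K_{a,a+1}$-factors trivialize, and then projected onto $\mc E^{k|n-k}$.

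The only genuinely combinatorial step is the translation between the Bruhat-style relation ``$\la\to_a\mu$'' and single-entry moves on $f_\la$. Unpacking \eqref{aux:fns}, an arrow $\la\to_a\mu$ with $\la_i=a+\zeta$ and $\mu_i=a+1+\zeta$ for some $1\le i\le k$ corresponds to changing $f_\la(i-k-1)=a$ to $f_\mu(i-k-1)=a+1$ in a negative slot (which is precisely the action of $F_a$ on the $\VV$-factor there), while an arrow with $\la_{i'}=-a-1-\zeta$ and $\mu_{i'}=-a-\zeta$ for some $k+1\le i'\le n$ corresponds to changing $f_\la(i'-k)=a+1$ to $f_\mu(i'-k)=a$ in a positive slot (the action of $F_a$ on the $\WW$-factor). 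Moreover, the strict dominance requirement $\mu\in\La^+_{k,\zeta}$ is equivalent to $f_\mu\in\Z^{k|n-k}_+$, so no wedge collisions occur on one side precisely when no forbidden equality arises on the other, and no reordering (hence no sign) is needed. The case of $E_a$ is entirely symmetric.

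Once the combinatorial match is in place, we deduce $j([E_aM])=2E_aj([M])$ and $j([F_aM])=2F_aj([M])$ on the basis $\{[K(\la)]\}$ and, by $\Z$-linearity, on all of $\mc K(\mc F^\Delta)$. The main obstacle, which is really the only nontrivial check, is this bookkeeping comparison: one must carefully track the shift by $\zeta$ and the sign reversal in \eqref{aux:fns} on positive slots. Everything else is formal once Lemma \ref{chOfKInTranslationFunctor} is in hand, and the factor of $2$ in that lemma exactly accounts for the factor of $2$ appearing in the statement, reflecting the ``queer'' (spin) nature of $\mf q(n)$ as opposed to $\gl(k|n-k)$.
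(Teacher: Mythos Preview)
Your proposal is correct and follows essentially the same route as the paper: the paper deduces the proposition directly from Lemma \ref{chOfKInTranslationFunctor} together with \cite[Corollary 4.26 and Theorem 4.28]{Br1}, and you have simply written out in detail the combinatorial dictionary between the relation $\la\to_a\mu$ and the $q=1$ Chevalley action on $K_{f_\la}(1)$ that makes this deduction go through.
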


\begin{prop} \label{TypicalTilting}
Let $\la\in\La^+_{k,\zeta}$. If $\la$ is typical, then $K(\la)=L(\la)=U(\la)$.
\end{prop}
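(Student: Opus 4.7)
The plan is to separate the statement into two parts and prove each by a central-character argument: first show that $K(\la)=L(\la)$ using the linkage principle in $\mc F_{k,\zeta}$, and then deduce $K(\la)=U(\la)$ from the tilting BGG reciprocity (Corollary~\ref{cor:tilting}) applied in $\mc F_{k,-\zeta}$.

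For the first assertion, suppose $L(\mu)$ is a composition factor of $K(\la)$. Since $K(\la)\in \mc F_{k,\zeta}$ we have $\mu\in\La^+_{k,\zeta}$, and $\chi_\mu=\chi_\la$, which by the criterion recalled at the beginning of Section~\ref{ProofOfMainThm} amounts to $\mathrm{wt}(\mu)=\mathrm{wt}(\la)$. Typicality of $\la$ means exactly that the sums $\sum_{i\le k}\epsilon_{\la_i-\zeta}$ and $\sum_{i>k}\epsilon_{-(\la_i+\zeta)}$ have no common terms, so the equation $\mathrm{wt}(\mu)=\mathrm{wt}(\la)$ forces $\{\la_1-\zeta,\ldots,\la_k-\zeta\}$ and $\{\mu_1-\zeta,\ldots,\mu_k-\zeta\}$ to agree as multisets, and similarly for the second blocks. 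Since $\mu\in\La^+_{k,\zeta}$ requires both halves of $\mu$ to be strictly decreasing, this forces $\mu=\la$. Hence $L(\la)$ is the only possible composition factor of $K(\la)$.

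To conclude $K(\la)=L(\la)$, let $N$ be the kernel of the canonical surjection $K(\la)\twoheadrightarrow L(\la)$. The $\la$-weight spaces satisfy $K(\la)_\la=I_\la=L(\la)_\la$, each of dimension $2^{\lceil \ell(\la)/2\rceil}$, so the surjection is an isomorphism on $\la$-weight spaces and $N_\la=0$. If $N\neq 0$, pick a simple quotient of $N$; it must be a composition factor of $K(\la)$ and hence equal to $L(\la)$ by the previous paragraph. But any occurrence of $L(\la)$ as a subquotient of $N$ produces a nonzero $\la$-weight vector in $N$, a contradiction. Hence $N=0$ and $K(\la)=L(\la)$.

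For the second assertion, Corollary~\ref{cor:tilting} gives
\begin{align*}
(U(\la):K(\mu))=[K(-w_0^+\mu):L(-w_0^+\la)],
\end{align*}
with the right-hand side computed in $\mc F_{k,-\zeta}$. Typicality is preserved under $\mu\mapsto -w_0^+\mu$, so $-w_0^+\la\in\La^+_{k,-\zeta}$ is typical, and the first step applied inside $\mc F_{k,-\zeta}$ shows that $L(-w_0^+\la)$ appears as a composition factor of $K(-w_0^+\mu)$ only when $-w_0^+\mu=-w_0^+\la$, i.e.\ $\mu=\la$, in which case the multiplicity is $1$. Therefore $(U(\la):K(\mu))=\delta_{\mu,\la}$ and $U(\la)\cong K(\la)$. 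The only delicate ingredient throughout is the disjointness of the positive and negative contributions to $\mathrm{wt}(\la)$, which is built into the definition of typicality; once this is noted, both equalities drop out directly.
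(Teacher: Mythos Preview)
Your proof is correct and follows essentially the same strategy as the paper's: a central-character argument to show $K(\la)=L(\la)$, then the observation that typicality is preserved under $\la\mapsto -w_0^+\la$ together with Corollary~\ref{cor:tilting} to obtain $U(\la)=K(\la)$. Your write-up is somewhat more explicit than the paper's (you spell out the $\mathrm{wt}$-function argument and the weight-space count showing the kernel of $K(\la)\twoheadrightarrow L(\la)$ vanishes, whereas the paper simply invokes a singular-vector argument), but the underlying ideas coincide.
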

\begin{proof}
We have a surjection $K(\la)\rightarrow L(\la)$ that sends the highest weight space to the highest weight space. Now, if $K(\la)$ has a singular vector, then its weight $\mu$ lies $\La^+_{k,\zeta}$ and furthermore we have identical central character $\chi_\la=\chi_\mu$. Since $\la$ is typical, we must have $\la=\mu$. Thus, $K(\la)=L(\la)$ is irreducible.

Note that $\la\in\La^+_{k,\zeta}$ is typical if and only if $-w_0^+\la\in\La^+_{k,-\zeta}$ is typical. Thus, we have $K(-w_0^+\la)=L(-w_0^+\la)$, and hence by Corollary \ref{cor:tilting}, we have $U(\la)=K(\la)$.
\end{proof}

Let $\la\in \La^+_{k,\zeta}$ and $a\in \mathbb{Z}$. It is known that both $\text{E}_aU(\la)$ and $\text{F}_aU(\la)$ are direct sums of tilting modules (see, e.g., \cite[Corollary 4.27]{Br1}). Furthermore, we have the following lemma \cite[Lemma 4.3]{Ch}.

\begin{lem} \label{TransFunOfTiltings}
Let $\la\in \La^+_{k,\zeta}$. Then the multiplicity of each non-zero tilting module summand of $\emph{E}_aU(\la)$ and $\emph{F}_aU(\la)$ is even.
\end{lem}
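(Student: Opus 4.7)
The plan is to promote the factor-of-two phenomenon visible in \lemref{chOfKInTranslationFunctor} to the level of tilting multiplicities by transferring the computation into the Fock space $\mc E^{k|n-k}$. Since $\text{E}_a U(\la)$ and $\text{F}_a U(\la)$ are, by hypothesis, direct sums of indecomposable tilting modules, I would begin by writing
\begin{align*}
\text{E}_a U(\la) = \bigoplus_\mu U(\mu)^{\oplus m_\mu^E},\qquad \text{F}_a U(\la) = \bigoplus_\mu U(\mu)^{\oplus m_\mu^F},
\end{align*}
and the goal is to show every $m_\mu^E$ and $m_\mu^F$ lies in $2\Z$. All modules involved carry a parabolic Verma flag, so their classes live naturally in $\mc K(\mc F^\Delta)$.

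Next, I would apply the $\Z$-isomorphism $j: \mc K(\mc F^\Delta)\to \mc E^{k|n-k}_\Z$ of \propref{PropIsoBetweenKandFockSpace}, which intertwines $\text{E}_a$ with $2E_a$ and $\text{F}_a$ with $2F_a$. The direct-sum decomposition above translates into
\begin{align*}
\sum_\mu m_\mu^E\, j([U(\mu)]) \;=\; 2\,E_a\, j([U(\la)])
\end{align*}
in $\mc E^{k|n-k}_\Z$, and similarly for $\text{F}_a$. By definition, $j([U(\mu)]) = \sum_\nu (U(\mu):K(\nu))\, K_{f_\nu}(1)$, and the coefficient matrix $\bigl((U(\mu):K(\nu))\bigr)$ is upper-unitriangular with respect to the partial order on $\La^+_{k,\zeta}$, because the indecomposable tilting $U(\mu)$ contains $K(\mu)$ exactly once in its Verma filtration and otherwise only $K(\nu)$'s with $\nu<\mu$. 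In particular this matrix is invertible over $\Z$.

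To conclude, note that the Chevalley generators $E_a$ and $F_a$ preserve the integral lattice $\mc E^{k|n-k}_\Z$, so the right-hand sides $2\,E_a\,j([U(\la)])$ and $2\,F_a\,j([U(\la)])$ lie in $2\,\mc E^{k|n-k}_\Z$, i.e.\ all their coordinates in the standard monomial basis $\{K_{f_\nu}(1)\}$ are even integers. Inverting the upper-unitriangular transition matrix over $\Z$ then forces each $m_\mu^E$ and $m_\mu^F$ to be even, which is the claim. The main substantive input is \propref{PropIsoBetweenKandFockSpace}, whose proof already packages the crucial factor of two at the Verma level supplied by \lemref{chOfKInTranslationFunctor}; once that identification is in hand, the rest is an elementary linear-algebra extraction over $\Z$, and I would expect no serious obstacle beyond carefully checking that the decompositions and translations stay within $\mc F^\Delta$ throughout.
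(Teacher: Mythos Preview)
Your argument is correct. The paper itself does not prove this lemma but simply cites \cite[Lemma~4.3]{Ch}, so a direct comparison with ``the paper's own proof'' is not possible here. What can be said is that your route is entirely self-contained within the framework already set up in the present paper: you pass to $\mc K(\mc F^\Delta)$, invoke \propref{PropIsoBetweenKandFockSpace} to see that $j([\text{E}_aU(\la)])=2E_a\,j([U(\la)])\in 2\,\mc E^{k|n-k}_\Z$, and then observe that the classes $\{j([U(\mu)])\}_\mu$ form a $\Z$-basis of $\mc E^{k|n-k}_\Z$ because the matrix $\bigl((U(\mu):K(\nu))\bigr)$ is upper-unitriangular with integer entries. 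That last step is the only point worth spelling out a bit more carefully (finiteness of the $K$-flag of each $U(\mu)$, guaranteed by \cite[Theorem~6.3]{Br3}, is what makes the unitriangular inversion over $\Z$ legitimate), but it is standard. One minor wording issue: the decomposition of $\text{E}_aU(\la)$ and $\text{F}_aU(\la)$ into tiltings is not ``by hypothesis'' but is the fact recorded just before the lemma, with reference to \cite[Corollary~4.27]{Br1}.
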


The following lemma follows  from Procedure \ref{Br1Procedure}.

\begin{lem} \label{ExpressionOfUf} For every $f\in \mathbb{Z}^{k|n-k}_{+}$, we have 
$U_f(1) \in K_f(1) + \sum_{g \prec f}\mathbb{Z}_{\geq 0} K_g(1)$.
\end{lem}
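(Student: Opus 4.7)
The plan is to combine Procedure \ref{Br1Procedure}, which writes $U_f$ as an iterated application of Chevalley operators to a typical standard basis element, with the categorification input of Proposition \ref{PropIsoBetweenKandFockSpace}, which guarantees the relevant positivity at $q=1$.

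First, I would apply Procedure \ref{Br1Procedure} to produce a typical $g_0 \in \Z^{k|n-k}_+$ together with operators $\widehat{X}_{a_1}, \ldots, \widehat{X}_{a_\ell} \in \{E_{a_i}, F_{a_i}\}$ such that $U_f = \widehat{X}_{a_\ell} \cdots \widehat{X}_{a_1} U_{g_0}$. Since $g_0$ is typical, $U_{g_0} = K_{g_0}$, so specializing at $q=1$ yields
\[
U_f(1) = \widehat{X}_{a_\ell} \cdots \widehat{X}_{a_1} K_{g_0}(1).
\]

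Next, I would combine Proposition \ref{PropIsoBetweenKandFockSpace} with Lemma \ref{chOfKInTranslationFunctor} to extract the explicit formulas at $q=1$
\[
E_a K_{f_\la}(1) = \sum_{\mu \rightarrow_a \la} K_{f_\mu}(1), \qquad F_a K_{f_\la}(1) = \sum_{\la \rightarrow_a \mu} K_{f_\mu}(1),
\]
after cancelling the factor of $2$ appearing on both sides of the categorification statement. Both right-hand sides are manifestly non-negative integer combinations of standard basis vectors. Iterating the $\ell$ applications of $\widehat{X}_{a_i}$ starting from the single standard basis element $K_{g_0}(1)$ therefore forces $U_f(1)$ to lie in the cone $\bigoplus_{g \in \Z^{k|n-k}_+} \Z_{\ge 0}\, K_g(1)$.

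Finally, I would impose the leading-term constraint from Theorem \ref{KL-Lemma}: $U_f = K_f + \sum_{g \prec f} u_{g,f}(q) K_g$, which at $q=1$ contributes $K_f(1)$ with coefficient $1$ and confines any other contributing $g$ to $g \prec f$. Combining this support restriction with the non-negativity just established gives the stated containment. The main obstacle I anticipate lies not in the short iteration itself, but rather in the step asserting that the Chevalley generators preserve the non-negative integer cone spanned by the standard basis at $q=1$; this is precisely the content of the translation-functor categorification encoded in Proposition \ref{PropIsoBetweenKandFockSpace}, so once that tool is in hand the argument should go through cleanly.
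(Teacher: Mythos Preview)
Your argument is correct and follows the same skeleton as the paper's one-line justification: invoke Procedure~\ref{Br1Procedure} to express $U_f$ as an iterated Chevalley operator applied to a typical $K_{g_0}$, then use positivity of the Chevalley action on the standard basis at $q=1$ together with the leading-term constraint from Theorem~\ref{KL-Lemma}.

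The one place where you take a heavier route than necessary is the positivity step. You appeal to the categorification in Proposition~\ref{PropIsoBetweenKandFockSpace} and Lemma~\ref{chOfKInTranslationFunctor} to obtain $E_a K_f(1), F_a K_f(1) \in \sum_g \Z_{\ge 0} K_g(1)$. This works and is not circular (Proposition~\ref{PropIsoBetweenKandFockSpace} does not rely on the present lemma), but it is not needed: the same positivity can be read off directly from the combinatorics of the $q$-wedge space. Under the identification $\mc E^{k|n-k}\cong \wedge^k\VV\otimes\wedge^{n-k}\WW$, at $q=1$ each Chevalley generator acts on a wedge monomial by shifting a single entry by $\pm 1$, yielding either $0$ or another standard basis vector with coefficient $1$. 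So the cone $\sum_g \Z_{\ge 0} K_g(1)$ is preserved for purely elementary reasons, and the representation-theoretic input is unnecessary here. This is presumably what the paper has in mind when it says the lemma ``follows from Procedure~\ref{Br1Procedure}.''
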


We have now all the ingredients to adapt Method two of the proof of \cite[Theorem 4.37]{Br1} to prove that
 Procedure \ref{Br1Procedure}  specialized at $q=1$ gives the construction of the tilting modules in $\mc F$.
\begin{thm} \label{ConstructionOfTiltings}
Let $\la\in\La^+_{k,\zeta}$. Then $[U(\la)]$ is mapped to $U_{f_{\la}}(1)$ under the isomorphism $j$ in \eqref{IsoBetweenKandFockSpace}.
\end{thm}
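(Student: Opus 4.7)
The strategy is induction on the super Bruhat order on $\La^+_{k,\zeta}$, with the base case being the typical weights. If $\la\in\La^+_{k,\zeta}$ is typical (that is, $\sharp\la=0$), then Proposition \ref{TypicalTilting} gives $U(\la)=K(\la)$, while $U_{f_\la}(1)=K_{f_\la}(1)$ because Procedure \ref{Br1Procedure} terminates immediately at typical weights. Since $j([K(\la)])=K_{f_\la}(1)$ by definition of $j$ in \eqref{IsoBetweenKandFockSpace}, the claim follows.

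For the inductive step, fix atypical $\la$ and assume $[U(\mu)]\mapsto U_{f_\mu}(1)$ for all $\mu\prec\la$. Apply one step of Procedure \ref{Br1Procedure} to $f=f_\la$ to obtain $h\in\mathbb{Z}^{k|n-k}_+$ with $h\prec f$, an integer $a$, and operators $\widehat{X}_a\in\{E_a,F_a\}$ together with its bi-adjoint $\widehat{Y}_a$ satisfying the conclusions of Lemma \ref{LemmaForProcedure}. Let $\mu_h$ denote the weight in $\La^+_{k,\zeta}$ corresponding to $h$ under \eqref{aux:fns}. By the inductive hypothesis applied to $\mu_h\prec\la$, we have $[U(\mu_h)]\mapsto U_h(1)$. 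Let $\widehat{\text{X}}_a$ be the translation functor on $\mc F$ corresponding to $\widehat{X}_a$. Then $\widehat{\text{X}}_a U(\mu_h)\in\mc F^\Delta$ is a direct sum of tilting modules, and by Lemma \ref{TransFunOfTiltings} it decomposes as $\bigoplus_\nu U(\nu)^{\oplus 2m_\nu}$ for non-negative integers $m_\nu$.

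The crucial claim is that $\widehat{\text{X}}_a U(\mu_h)\cong U(\la)^{\oplus 2}$. By Proposition \ref{PropIsoBetweenKandFockSpace} combined with the identity $\widehat{X}_aU_h=U_f$ from Lemma \ref{LemmaForProcedure}, we compute
\begin{equation*}
j\bigl([\widehat{\text{X}}_a U(\mu_h)]\bigr)=2\widehat{X}_aU_h(1)=2U_f(1).
\end{equation*}
By Lemma \ref{ExpressionOfUf}, $U_f(1)$ has leading term $K_f(1)$ and no terms $K_g(1)$ with $g\succ f$. Comparing the coefficient of $K_{f_\la}(1)$ on both sides forces $m_\la=1$ and rules out any $\nu\succ\la$ from appearing. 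Once we further verify that no $U(\nu)$ with $\nu\prec\la$ occurs as a summand, the relation $2[U(\la)]\mapsto 2U_f(1)$ immediately yields $[U(\la)]\mapsto U_{f_\la}(1)$.

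The main obstacle is to exclude the potential tilting summands $U(\nu)$ with $\nu\prec\la$. We expect this to follow from a case analysis according to the step of Procedure \ref{Br1Procedure} that produced $h$. In the case $\sharp h=\sharp f$, Lemma \ref{LemmaForProcedure} gives $\widehat{X}_aK_h=K_f$ with no correction term; combining this with Lemma \ref{chOfKInTranslationFunctor} and the parabolic Verma flag of $U(\mu_h)$ granted by the inductive hypothesis shows that the parabolic Verma flag of $\widehat{\text{X}}_a U(\mu_h)$ matches precisely twice that of $U(\la)$, leaving no room for further tilting summands. In the case $\sharp h=\sharp f-1$, we additionally exploit the identity $\widehat{Y}_a\widehat{X}_aU_h=(q+q^{-1})U_h$ at $q=1$, bi-adjointness of $\widehat{\text{X}}_a$ and $\widehat{\text{Y}}_a$, and Theorem \ref{thm:fund:fact} (which produces the nonzero homomorphism $K(\la-\alpha)\to K(\la)$ forcing the relevant composition factors in the parabolic Verma flag to appear) to pin down the composition multiplicities and thereby preclude any extra summands. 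Once $\widehat{\text{X}}_a U(\mu_h)\cong U(\la)^{\oplus 2}$ is established, the theorem follows.
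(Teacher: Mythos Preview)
Your induction scheme has a genuine gap: the weight $\mu_h$ produced by one step of Procedure~\ref{Br1Procedure} is \emph{not} $\prec\la$. The operator $\widehat{X}_a$ shifts the $\epsilon$-weight, so $\text{wt}^\epsilon_{-k}(h)\neq\text{wt}^\epsilon_{-k}(f_\la)$, whereas the very definition of $\preceq$ in Section~\ref{SectionTheFockSpace} requires these to coincide; representation-theoretically, the translation functor moves between blocks, so $\mu_h$ and $\la$ have different central characters and are incomparable. Thus your inductive hypothesis does not apply to $\mu_h$. The paper instead inducts along the procedure (equivalently on the number of steps to reach a typical element, which is finite since Procedure~\ref{Br1Procedure} terminates) and simply assumes the result for the immediate predecessor, with the base case given by Proposition~\ref{TypicalTilting}.

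Your argument for excluding extra tilting summands is also incomplete. In the case $\sharp h=\sharp f$ you assert that the Verma flag of $\widehat{\text{X}}_a U(\mu_h)$ ``matches precisely twice that of $U(\la)$'', but the Verma-flag multiplicities of $U(\la)$ are exactly the unknown quantities, so this is circular; and knowing $j([U(\nu)])=U_{f_\nu}(1)$ for $\nu\prec\la$ would not help either, since $j([U(\la)])$ itself remains undetermined in the resulting linear relation. The paper's route is different and treats both cases at once: write $X_aU(\mu_h)$ for half of $\widehat{\text{X}}_aU(\mu_h)$ via Lemma~\ref{TransFunOfTiltings}, observe $U(\la)$ is a summand, and suppose for contradiction $X_aU(\mu_h)=T_1\oplus T_2$ with $T_1=U(\la)$, $T_2\neq 0$. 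Bi-adjointness forces $(Y_aT_i:U(\mu_h))\neq 0$ for $i=1,2$, while Lemma~\ref{LemmaForProcedure} gives $j([Y_aX_aU(\mu_h)])\in\{U_h(1),\,2U_h(1)\}$. The first value is already impossible; the second pins down $Y_aU(\la)=U(\mu_h)$, whence $[Y_aU(\la):L(\mu_h)]=1$, and then Corollary~\ref{cor:tilting} together with Theorem~\ref{thm:fund:fact} yields $[Y_aU(\la):L(\mu_h)]\ge 2$, a contradiction. Your sketch for the case $\sharp h=\sharp f-1$ names these ingredients but does not assemble them into this indecomposability argument.
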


\begin{proof}
We shall proceed by induction on the degree of atypcality $\sharp\la$ of $\la$. If $\sharp\la=0$, then $K(\la)=L(\la)=U(\la)$ by Lemma \ref{TypicalTilting}. Assume that $\sharp \la >0$ and furthermore $j([U(\nu)]) = U_{h}(1)$, where $\nu \in \La^+_{k,\zeta}$ satisfies $h = f_{\nu}$. Let $\widehat{X}_a \in \{E_a, F_a\}_{a\in \mathbb{Z}}$ be the operators given in Procedure \ref{Br1Procedure}. For each tilting module $U\in \mc{F}$ we define $X_aU$ to be a direct summand of the direct sum of two isomorphic copies of $\widehat{X}_aU$ (see Lemma \ref{TransFunOfTiltings}).

First note that $j([X_aU(\nu)]) = \widehat{X}_aU_{h}(1) = U_{f_{\la}}(1)$. Therefore, we may conclude that $U(\la)$ is a direct summand of $X_aU(\nu)$ by Lemma \ref{ExpressionOfUf}. We shall prove that $U(\la)=X_a U(\nu)$ by proving that $X_a U(\nu)$ is indecomposable.

Suppose $X_a U(\nu)$ is decomposable. Let $ X_a U(\nu)=T_1\oplus T_2$ with $T_1=U(\la)$. It follows from Lemma \ref{LemmaForProcedure}  that
  \begin{align*}
j([Y_aX_aU(\nu)]) =  \widehat{Y}_a\widehat{X}_aU_h(1)= \left\{ \begin{array}{ll}  U_h(1), \text{ if } \sharp \la = \sharp \nu,\\
2U_h(1), \text{ if } \sharp \la -1 = \sharp \nu.
 \end{array} \right.
   \end{align*}
 Since $\widehat{X}_a,\widehat{Y}_a$ are bi-adjoint to each other, as in the proof of \cite[Theorem 4.37]{Br1}, we have $(Y_aT_i:U(\nu))\neq 0$ for $i=1,2$. This means that $j([Y_aX_aU(\nu)]) =2U_h(1) $. Therefore,
\begin{align*}
Y_aX_a U(\nu)=U(\nu)\oplus U(\nu),
\end{align*}
and so  $Y_aT_1=Y_aT_2=U(\nu)$. We obtain $[Y_aU(\la):L(\nu)]=1$. We will show that $[Y_a U(\la):L(\nu)]\ge 2$ and so get a contradiction.

By Lemma \ref{LemmaForProcedure} again, there is  $\mu=\la-\alpha\in\La^+_{k,\zeta}$ with $\alpha\in\Phi^+(\mf u)$, $(\la,\ov{\alpha})=0$  such that $\widehat{X}_aK_h(1) = K_f(1)+K_{f_{\mu}}(1)$. By Corollary \ref{cor:tilting} we have
\begin{align*}
\left(U(\la):K(\mu)\right)=[K(-w_0^+\mu):L(-w_0^+\la)]=[K(-w_0^+\la+w_0^+\alpha):L(-w_0^+\la)].\end{align*} Note that \begin{align*} (-w_0^+\la+w_0^+\alpha ,\ov{w_0^+\alpha})=-(w_0^+\la,w_0^+\ov{\alpha})=-(\la,\ov{\alpha})=0. \end{align*} Consequently, by Theorem \ref{thm:fund:fact} we have $[K(-w_0^+\la+w_0^+\alpha):L(-w_0^+\la)]\ge 1$ and hence $\left(U(\la):K(\mu)\right)\ge 1$.

Since $\left(U(\la):K(\la)\right)= 1$ and $[K(\la):L(\mu)]\ge 1$ by Theorem \ref{thm:fund:fact}, we conclude that
\begin{align}\label{aux101}
\left[U(\la):L(\mu)\right]\ge 2.
\end{align}

Furthermore, since $X_aK(\nu)$ has a filtration with $K(\mu)$ on the top, by the adjunction between $\widehat{X}_a, \widehat{Y}_a$ again we have
\begin{align*}
\text{Hom}_\G\left(K(\nu),\widehat{Y}_a L(\mu)\right)= \text{Hom}_\G\left(\widehat{X}_a K(\nu), L(\mu)\right)\not=0,
\end{align*}
which implies that $[Y_aL(\mu):L(\nu)]\ge 1$. Finally, combining this with \eqref{aux101} gives $[Y_a U(\la):L(\nu)]\ge 2$.
\end{proof}

We are now ready to prove Theorem \ref{PKLConjecture}.

\begin{proof}[Proof of Theorem \ref{PKLConjecture}]
 By Theorem \ref{ConstructionOfTiltings} and Corollary \ref{cor:tilting} we have the multiplicity formula $u_{\mu,\la}(1) = (U(\la):K(\mu))$ and $u_{-w_0^+\la,-w_0^+\mu}(1) = (K(\la):L(\mu))$. Namely, we have the character formulas
 \begin{align*}
 &\text{ch}U(\la) = \sum_{\mu\preceq \la} u_{\mu,\la}(1)\text{ch}K(\mu),\\
 &\text{ch}K(\la) = \sum_{\mu\preceq \la} u_{-\omega_0^+ \la, -\omega_0^+ \mu}(1) \text{ch}L(\mu).
 \end{align*}
 Let $\textsf{1}_{k|n-k}: = \sum_{1\leq i\leq k}\delta_i -\sum_{k+1\leq i\leq n}\delta_i$. From \cite[Corollary 3.14 and (4.17)]{Br1}), we have that the following transition matrix
 \begin{align*}
 \left(u_{-\omega_0^+ \la, -\omega_0^+ \mu}(1)\right)_{\la,\mu \in \La^+_{k,\zeta}}
 \end{align*}
 has inverse matrix
 \begin{align*}
 \left(\ell_{ \mu +(n+1) \textsf{1}_{k|n-k},  \la+(n+1) \textsf{1}_{k|n-k}}(1)\right)_{\la,\mu \in \La^+_{k,\zeta}} =\left(\ell_{ \mu,  \la}(1)\right)_{\la,\mu \in \La^+_{k,\zeta}}.
 \end{align*}
 The completes the proof.
\end{proof}

\section{Kac-Wakimoto and Sergeev-Pragacz type character formulas}\label{sec:KW:formula}

In this section we apply Theorem \ref{PKLConjecture} to obtain closed character formula for analogues of Kostant and polynomial modules of $\mf{q}(n)$. We first recall the notation of $\h'_{m|n}$, $\delta'_a$  ($a\in I(m|n)$), and $\Phi'^+$ from Section \ref{SectionQueerLieSuperalgebra}. Furthermore, given a partition $\mu=(\mu_1,\mu_2,\ldots)$, we let $\mu^t$ denote its conjugate partition. Finally, recall that a partition $\mu$ is called a {$(k|n-k)$-hook partition} if $\mu_{k+1}\le n-k$.

Let $0\le k\le n$ and let $\la\in\La_{k,\zeta}$. Define $\rho=\sum_{i=1}^k (k-i+1-\frac{n+1}{2})\delta_i+\sum_{j=k+1}^n (k-j+\frac{n+1}{2})\delta_j$. Define $\la'=\sum_{i=1}^n\la'_i\delta_i$ by $$\la':=\sum_{i=1}^k(\la_i-\zeta-k+i-1+\frac{n+1}{2})\delta_i+\sum_{j=k+1}^n(\la_j+\zeta+j-k-\frac{n+1}{2})\delta_j.$$
Identifying $\delta_i$ with $\delta'_{-k-1+i}$ and $\delta_j$ with $\delta'_{j-k}$, for $1\le i\le k$ and $k+1\le j\le n$, we may regard $\la'$ and $\rho$ as elements in $\h'^*_{k|n-k}$ and thus as weights for $\gl(k|n-k)$. This gives a bijection between the set $\La_{k,\zeta}$ and the set of integral weights for $\gl(k|n-k)$. In this section we shall freely use this identification and thus identify $\h_\even^*$ with $\h_{k|n-k}'^*$.

Recall that the Borel subalgebras of a general linear Lie superalgebra $\gl(k|n-k)$ are in general not conjugate under its Weyl group $\mf S_{k|n-k}=\mf S_k\times \mf S_{n-k}$. However, it is well-known \cite{LSS} that any two non-conjugate Borel subalgebras with identical even subalgebra can be transformed to each other by a sequence of odd reflections. For a Borel subalgebra ${\mf b'}$ of $\gl(k|n-k)$ let us denote the set of positive and simple roots of ${\mf b'}$ by $\Phi'^+_{{\mf b'}}$ and $\Pi'_{{\mf b'}}$, respectively. Recall that the set of positive roots of the standard Borel subalgebra is denoted by $\Phi'^+$.

Let us denote the highest weight irreducible $\gl(k|n-k)$-module of highest weight $\nu$ with respect to the Borel subalgebra ${\mf b'}$ by $L'_{{\mf b'}}(\nu)$. Let $\rho_{{\mf b'}}$ denote the signed half sum of the positive roots in ${\mf b'}$. Above, the notation $\rho$ stands for the Weyl vector with respect to the standard Borel.

Recall the notion of a $\gl(k|n-k)$-Kostant module from \cite{BS}. In the language of \cite{SZ1} a finite-dimensional irreducible $\gl(k|n-k)$-module of highest weight (with respect to the standard Borel subalgebra) $\la$ is a Kostant module, if $\la$ is {\em totally connected}. By \cite{CHR} it follows that a finite-dimensional irreducible module $L'$ is a Kostant module if and only if there exists a weight $\nu$ and a Borel subalgebra ${\mf b'}$ with a distinguished subset $S\subseteq\Pi'_{{\mf b'}}$ consisting of mutually orthogonal roots such that (i) $L'\cong L'_{{\mf b'}}(\nu)$, (ii) $\sharp\nu=|S|$, and (iii) $S$ is orthogonal to $\nu+\rho_{\mf b'}$. Furthermore, the character for such a module is given by the so-called Kac-Wakimoto character formula which was conjectured in \cite{KW} and established (in the type $A$ case) in \cite{CHR}:

\begin{align}\label{KW:gl}
\text{ch} L'_{{\mf b'}}(\nu)=\frac{1}{\sharp\nu!}\frac{\prod_{\beta\in\Phi'^+_{{\mf b'},\bar{1}}}e^{\beta/2}+e^{-\beta/2}}{\prod_{\alpha\in\Phi'^+_{{\mf b'},\bar{0}}}e^{\alpha/2}-e^{-\alpha/2}} \sum_{w\in\mf S_{k|n-k}}(-1)^{\ell(w)}w\left(\frac{e^{\nu+\rho_{{\mf b'}}}}{\prod_{\gamma\in S}1+e^{-\gamma}}\right).
\end{align}

\begin{lem}\label{KW:KL:iden}
Let $\la\in\La^+_{k,\zeta}$ such that $L'(\la')$ is a $\gl(k|n-k)$-Kostant module. Suppose that $L'(\la')\cong L'_{{\mf b'}}(\la'_{{\mf b'}})$ such that $S\subseteq\Pi'_{\mf b'}$ is a distinguished subset consisting of mutually orthogonal roots with $\sharp\la'=|S|$ and orthogonal to $\la'_{\mf b'}+\rho_{\mf b'}$. Then we have the following identity in $\h_\even^*$:
\begin{align*}
\sum_{\mu\preceq\la}\ell_{\mu\la}(1)\sum_{w\in\mf S_{k|n-k}}(-1)^{\ell(w)}w(e^{\mu'+\rho}) = \frac{1}{\sharp\la!} \sum_{w\in\mf S_{k|n-k}}(-1)^{\ell(w)}w\left(\frac{e^{\la'_{{\mf b'}}+\rho_{{\mf b'}}}}{\prod_{\gamma\in S}1+e^{-\gamma}}\right).
\end{align*}
\end{lem}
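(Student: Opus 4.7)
The plan is to express $\text{ch}\,L'(\la')$ in two different ways and equate the results. The first expression comes from the Brundan-Kazhdan-Lusztig character formula for finite-dimensional irreducible $\gl(k|n-k)$-modules \cite[Theorem 4.37]{Br1}, and the second from the Kac-Wakimoto formula \eqref{KW:gl} itself, which is applicable by the Kostant hypothesis on $L'(\la')$.

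First I would note that under the identification $\la\mapsto\la'$ from the start of this section, combined with Brundan's identification \eqref{aux:fns} used in Theorem \ref{PKLConjecture}, the combinatorial data all match: the Bruhat orders and atypicality degrees correspond, and $\ell_{\mu\la}(q) = \ell_{\mu'\la'}(q)$, since both are defined as transition coefficients in the same Fock space $\mc{E}^{k|n-k}$. Consequently \cite[Theorem 4.37]{Br1} gives $\text{ch}\,L'(\la') = \sum_{\mu'\preceq\la'} \ell_{\mu'\la'}(1)\,\text{ch}\,K'(\mu')$, where $K'(\mu')$ denotes the standard $\gl(k|n-k)$-Kac module. I would then rewrite $\text{ch}\,K'(\mu')$ using the Levi-module decomposition $K'(\mu')\cong\Lambda(\mf u'^{-})\otimes L'_0(\mu')$ together with Weyl's formula for $L'_0(\mu')$ and the identity $\rho = \rho_{\bar 0}-\rho_{\bar 1}$ (readily verified from the explicit formula for $\rho$ above). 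Using the $\mf S_{k|n-k}$-invariance of $\rho_{\bar 1}$ to pull $e^{-\rho_{\bar 1}}$ inside the Weyl sum, this yields
\begin{align*}
\text{ch}\,K'(\mu') = D\,\sum_{w\in \mf S_{k|n-k}}(-1)^{\ell(w)}w(e^{\mu'+\rho}),
\end{align*}
where $D := \prod_{\beta\in\Phi'^+_{\bar 1}}(e^{\beta/2}+e^{-\beta/2})/\prod_{\alpha\in\Phi'^+_{\bar 0}}(e^{\alpha/2}-e^{-\alpha/2})$. Crucially $D$ is independent of the choice of Borel with fixed even part, since $e^{\beta/2}+e^{-\beta/2}$ is symmetric in $\beta\leftrightarrow-\beta$. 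Substituting this into the BKL formula identifies $\text{ch}\,L'(\la')$ with $D$ times the left-hand side of the lemma.

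On the other hand, by the Kostant hypothesis on $L'(\la')$, the Kac-Wakimoto formula \eqref{KW:gl} applied with respect to the Borel $\mf b'$ gives
\begin{align*}
\text{ch}\,L'(\la') = \frac{D}{\sharp\la!}\sum_w(-1)^{\ell(w)}w\!\left(\frac{e^{\la'_{\mf b'}+\rho_{\mf b'}}}{\prod_{\gamma\in S}(1+e^{-\gamma})}\right),
\end{align*}
which is precisely $D$ times the right-hand side of the lemma. Equating the two expressions for $\text{ch}\,L'(\la')$ and cancelling $D$ completes the proof.

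The main subtlety is purely bookkeeping: (i) verifying that the KL polynomials from Theorem \ref{PKLConjecture} coincide with the BKL polynomials for $\gl(k|n-k)$ from \cite{Br1} under the shift $\la\mapsto\la'$, and (ii) confirming the Borel-independence of the prefactor $D$, so that the BKL side (written with respect to the standard Borel) and the KW side (written with respect to $\mf b'$) actually share the same denominator that can be cancelled. No conceptual difficulty is anticipated beyond these verifications.
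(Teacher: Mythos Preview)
Your proposal is correct and follows essentially the same approach as the paper: both express $\text{ch}\,L'(\la')$ via \cite[Theorem 4.37]{Br1} on one side and via the Kac-Wakimoto formula \eqref{KW:gl} on the other, then cancel the common Weyl-type denominator $D$, noting (as the paper does) that $D$ is independent of the choice of Borel with fixed even part. Your write-up is slightly more explicit about the identity $\rho=\rho_{\bar 0}-\rho_{\bar 1}$ and the $\mf S_{k|n-k}$-invariance of $\rho_{\bar 1}$ used to massage $\text{ch}\,K'(\mu')$ into the form $D\sum_w(-1)^{\ell(w)}w(e^{\mu'+\rho})$, but the argument is the same.
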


\begin{proof}
Let $K'(\la')$ denote the Kac module of $\gl(k|n-k)$ of highest weight $\la'$ with respect to the standard Borel subalgebra. By \cite[Theorem 4.37]{Br1} we have
\begin{align*}
\text{ch}L'(\la')=\sum_{\mu\preceq\la}\ell_{\mu\la}(1)\text{ch}K'(\mu').
\end{align*}
Combining this with \eqref{KW:gl} we have the identity:
\begin{align*}
\sum_{\mu\preceq\la}&\ell_{\mu\la}(1) \frac{\prod_{\beta\in\Phi'^+_\odd}e^{\beta/2}+e^{-\beta/2}} {\prod_{\alpha\in\Phi'^+_\even}e^{\alpha/2}-e^{-\alpha/2}}\sum_{w\in\mf S_{k|n-k}}(-1)^{\ell(w)}w(e^{\mu'+\rho})=\\
&\frac{1}{\sharp\la'!}\frac{\prod_{\beta\in\Phi'^+_{{\mf b'},\bar{1}}}e^{\beta/2}+e^{-\beta/2}}{\prod_{\alpha\in\Phi'^+_{{\mf b'},\bar{0}}}e^{\alpha/2}-e^{-\alpha/2}} \sum_{w\in\mf S_{k|n-k}}(-1)^{\ell(w)}w\left(\frac{e^{\la_{{\mf b'}}'+\rho_{{\mf b'}}}}{\prod_{\gamma\in S}1+e^{-\gamma}}\right).
\end{align*}
Since the even subalgebra of ${\mf b'}$ and that of the standard Borel subalgebra coincide, we have
\begin{align*}
\frac{\prod_{\beta\in\Phi'^+_\odd}e^{\beta/2}+e^{-\beta/2}} {\prod_{\alpha\in\Phi'^+_\even}e^{\alpha/2}-e^{-\alpha/2}} =  \frac{\prod_{\beta\in\Phi'^+_{{\mf b'},\bar{1}}}e^{\beta/2}+e^{-\beta/2}}{\prod_{\alpha\in\Phi'^+_{{\mf b'},\bar{0}}}e^{\alpha/2}-e^{-\alpha/2}}.
\end{align*}
From this the lemma follows.
\end{proof}

Note that corresponding to the Borel subalgebra ${\mf b'}$ for $\gl(k|n-k)$ we have a Borel subalgebra of $\G=\mf{q}(n)$, which is obtained in a similar way as for $\gl(k|n-k)$ with the sequence of odd reflections replaced by the corresponding sequence of twisting functors \cite{Ch}.

For $\la\in\La^+_{k,\zeta}$ we call an irreducible $\mf q(n)$-module $L(\la)$ a {\em Kostant module}, if $L'(\la')$ is a Kostant module of $\gl(k|n-k)$. We can now prove the following Kac-Wakimoto type character formula for Kostant modules of $\mf q(n)$.

\begin{thm}\label{thm:KW:formula}
Let $\la\in\La^+_{k,\zeta}$ such that $L(\la)$ is a Kostant module. Let ${\mf b'}$ be the corresponding Borel subalgebra of $\gl(k|n-k)$ with a distinguished set $S\subseteq\Pi'_{{\mf b'}}$ consisting of mutually orthogonal roots and $\sharp\la'=\sharp\la=|S|$ and orthogonal to $\la'_{\mf b'}+\rho_{\mf b'}$. Let $\la_{{\mf b'}}=\la'_{{\mf b'}}+\rho_{{\mf b'}}+\zeta 1_{k|n-k}$. Then we have
\begin{align*}
\text{ch}L(\la)=\frac{2^{\lceil{n/2}\rceil}}{\sharp\la!} \prod_{\alpha\in\Phi^+}\frac{1+e^{-\alpha}}{1-e^{-\alpha}}\sum_{w\in\mf S_{k|n-k}}(-1)^{\ell(w)}w\left(\frac{e^{\la_{{\mf b'}}}}{\prod_{\gamma\in S}1+e^{-\gamma}}\right).
\end{align*}
\end{thm}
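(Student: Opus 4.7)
My plan is to reduce the claim to a chain of three inputs: Theorem~\ref{PKLConjecture} to write $L(\la)$ as an alternating sum of $K(\mu)$'s, Proposition~\ref{IrrChForLevi} together with parabolic induction to get a closed form for $\text{ch}\,K(\mu)$, and Lemma~\ref{KW:KL:iden} to collapse the sum of Weyl characters weighted by the Brundan-Kazhdan-Lusztig numbers $\ell_{\mu\la}(1)$ into the Kac-Wakimoto denominator on the right-hand side. The first step is immediate: Theorem~\ref{PKLConjecture} yields
\[
\text{ch}\,L(\la)=\sum_{\mu\preceq\la,\,\mu\in\La^+_{k,\zeta}}\ell_{\mu\la}(1)\,\text{ch}\,K(\mu).
\]

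The second step is a character computation for the parabolic Verma module $K(\mu)=\text{Ind}_{\mf p^+}^{\G}L^0(\mu)$. As an $\mf l^+$-module, $K(\mu)\cong \mc S(\mf u^{+,-})\otimes L^0(\mu)$; since every root space of $\mf q(n)$ occurs with matching even and odd generators, the supersymmetric factor evaluates to $\prod_{\alpha\in\Phi^+(\mf u^+)}\tfrac{1+e^{-\alpha}}{1-e^{-\alpha}}$. Combining this with $\text{ch}\,L^0(\mu)$ obtained from Proposition~\ref{IrrChForLevi} in the case $\texttt{r}=(k)$, $\texttt{l}=(n-k)$, and using $\Phi^+=\Phi^+(\mf l^+)\sqcup\Phi^+(\mf u^+)$ and $\mf S_{k|n-k}=\mf S_k\times\mf S_{n-k}$, I get
\[
\text{ch}\,K(\mu)=2^{\lceil n/2\rceil}\prod_{\alpha\in\Phi^+}\frac{1+e^{-\alpha}}{1-e^{-\alpha}}\,\sum_{w\in\mf S_{k|n-k}}(-1)^{\ell(w)}w(e^\mu).
\]

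The third step is a bookkeeping translation. A direct check against the definitions in Section~\ref{sec:KW:formula} gives $\mu'+\rho=\mu-\zeta\mathbf{1}_{k|n-k}$, so $e^\mu=e^{\mu'+\rho}e^{\zeta\mathbf{1}_{k|n-k}}$, and since $\mathbf{1}_{k|n-k}$ is fixed by $\mf S_{k|n-k}$ the factor $e^{\zeta\mathbf{1}_{k|n-k}}$ pulls outside the alternating sum. Substituting into Step~1 and invoking Lemma~\ref{KW:KL:iden} to evaluate $\sum_{\mu\preceq\la}\ell_{\mu\la}(1)\sum_w(-1)^{\ell(w)}w(e^{\mu'+\rho})$ produces
\[
\text{ch}\,L(\la)=\frac{2^{\lceil n/2\rceil}}{\sharp\la!}\prod_{\alpha\in\Phi^+}\frac{1+e^{-\alpha}}{1-e^{-\alpha}}\,e^{\zeta\mathbf{1}_{k|n-k}}\sum_{w\in\mf S_{k|n-k}}(-1)^{\ell(w)}w\!\left(\frac{e^{\la'_{\mf b'}+\rho_{\mf b'}}}{\prod_{\gamma\in S}1+e^{-\gamma}}\right).
\]
Reabsorbing $e^{\zeta\mathbf{1}_{k|n-k}}$ into the argument of $w$ (again by $\mf S_{k|n-k}$-invariance) and using $\la_{\mf b'}=\la'_{\mf b'}+\rho_{\mf b'}+\zeta\mathbf{1}_{k|n-k}$ yields the asserted formula.

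Nothing in the argument above is expected to be subtle once the pieces are in hand; the content has all been built up already (the super-Bruhat order on $\La^+_{k,\zeta}$ is compatible with the atypicality count, so the Kostant hypothesis for $L'(\la')$ can be fed directly into the lemma). The one place that warrants care is confirming that the super-$\rho$ shift between the two formulations aligns exactly, i.e.\ that my identification $\mu=\mu'+\rho+\zeta\mathbf{1}_{k|n-k}$ together with Lemma~\ref{KW:KL:iden} really plugs into Step~2 without a stray Weyl denominator; this is a short check using $\Phi'^+_{\mf b',\even}=\Phi'^+_{\even}$ already carried out in the proof of Lemma~\ref{KW:KL:iden}. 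The genuinely deep ingredient is, of course, Theorem~\ref{PKLConjecture} (equivalently Theorem~\ref{ConstructionOfTiltings} and its queer Serganova input Theorem~\ref{thm:fund:fact}), without which no Kazhdan-Lusztig data would be available to feed into Lemma~\ref{KW:KL:iden}.
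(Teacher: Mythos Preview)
Your proof is correct and follows essentially the same route as the paper's: apply Theorem~\ref{PKLConjecture}, compute $\text{ch}\,K(\mu)$ via parabolic induction and Proposition~\ref{IrrChForLevi}, rewrite $e^{\mu}=e^{\mu'+\rho}e^{\zeta\mathbf{1}_{k|n-k}}$, pull the $\mf S_{k|n-k}$-invariant factor outside, invoke Lemma~\ref{KW:KL:iden}, and reabsorb. The only cosmetic difference is that the paper writes out the intermediate chain of equalities explicitly rather than stating the character formula for $K(\mu)$ first and then substituting.
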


\begin{proof}
By Theorem \ref{PKLConjecture} we have $\text{ch}L(\la)=\sum_{\mu\preceq\la}\ell_{\mu\la}(1)\text{ch}K(\mu)$. Thus, we compute
\begin{align*}
\text{ch}L(\la)
&=2^{\lceil{n/2}\rceil}\sum_{\mu\preceq\la}\ell_{\mu\la}(1)\prod_{\beta\in\Phi(\mf u^+)}\frac{1+e^{-\beta}}{1-e^{-\beta}}\sum_{w\in \mf S_{k|n-k}}(-1)^{\ell(w)}w\left(e^\mu\right)\prod_{\beta\in\Phi^+(\mf l)}\frac{1+e^{-\beta}}{1-e^{-\beta}}\\
&=2^{\lceil{n/2}\rceil}\sum_{\mu\preceq\la}\ell_{\mu\la}(1)\prod_{\beta\in\Phi^+}\frac{1+e^{-\beta}}{1-e^{-\beta}}\sum_{w\in \mf S_{k|n-k}}(-1)^{\ell(w)}w\left(e^\mu\right)\\
&=2^{\lceil{n/2}\rceil}\sum_{\mu\preceq\la}\ell_{\mu\la}(1)\prod_{\beta\in\Phi^+}\frac{1+e^{-\beta}}{1-e^{-\beta}}\sum_{w\in \mf S_{k|n-k}}(-1)^{\ell(w)}w\left(e^{\mu'+\rho+\zeta 1_{k|n-k}}\right)\\
&=2^{\lceil{n/2}\rceil}\sum_{\mu\preceq\la}\ell_{\mu\la}(1)\prod_{\beta\in\Phi^+}\frac{1+e^{-\beta}}{1-e^{-\beta}}\sum_{w\in \mf S_{k|n-k}}(-1)^{\ell(w)}w\left(e^{\mu'+\rho}\right)e^{\zeta 1_{k|n-k}}\\
&= \frac{2^{\lceil{n/2}\rceil}}{\sharp\la!} \prod_{\beta\in\Phi^+}\frac{1+e^{-\beta}}{1-e^{-\beta}} \sum_{w\in\mf S_{k|n-k}}(-1)^{\ell(w)}w\left(\frac{e^{\la'_{{\mf b'}}+\rho_{{\mf b'}}}}{\prod_{\gamma\in S}1+e^{-\gamma}}\right) e^{\zeta1_{k|n-k}},
\end{align*}
where in the last identity we have used Lemma \ref{KW:KL:iden}. The theorem now follows.
\end{proof}

\begin{example}
Consider $\mf q(4)$ and $\la=(\zeta+2)\delta_1+(\zeta+1)\delta_2+(-\zeta-1)\delta_3+(-\zeta-2)\delta_4$ so that $k=2$ and $\sharp\la=2$. Furthermore, $\Phi^+=\{\delta_i-\delta_j|1\le i<j\le 4\}$ and the integral Weyl group here is $\mf S_2\times\mf S_2$, consisting of permutations on the letters $\{1,2\}$ and $\{3,4\}$. Then $\la_{{\mf b'}}=(\zeta+2)\delta_1+(\zeta+2)\delta_2+(-\zeta-2)\delta_3+(-\zeta-2)\delta_4$ and $S=\{\delta_1-\delta_3,\delta_2-\delta_4\}$. We have
\begin{align*}
\text{ch}L(\la)=2 \prod_{1\le i<j\le 4}\frac{1+e^{-\delta_i+\delta_j}}{1-e^{-\delta_i+\delta_j}}\sum_{w\in\mf S_2\times\mf S_{2}}(-1)^{\ell(w)}w\left(\frac{e^{(\zeta+2) 1_{2|2}}}{(1+e^{-\delta_1+\delta_3})(1+e^{-\delta_2+\delta_4})}\right).
\end{align*}
\end{example}

\begin{rem}
Theorem \ref{thm:KW:formula} suggests that the Kostant modules for $\mf{q}(n)$ have BGG type resolutions in terms of the parabolic Verma modules $K(\mu)$ in analogy to the resolution of $\gl(k|n-k)$-Kostant modules by Kac modules \cite{CKL, BS}.
\end{rem}

We recall that every irreducible polynomial module of $\gl(k|n-k)$, i.e., every irreducible submodule of a tensor power of the standard module $\C^{k|n-k}$, is a Kostant module. For such modules, recall that one has another closed classical character formula, called the Sergeev-Pragacz formula (see, e.g., \cite[Page 60]{Mac} or \cite[\S12.2]{Mu}). Below, we shall derive an analogue of this formula for $\mf q(n)$-Kostant modules that correspond to polynomial modules for the general linear Lie superalgebra.

It is well-known that the isomorphism classes of irreducible polynomial modules of the Lie superalgebra $\gl(k|n-k)$ are in bijection with the so-called $(k|n-k)$-hook partitions. To be more precise, let $\nu=\sum_{i=1}^n\nu_i\delta'_i\in\h_{k|n-k}'^*$. A necessary and sufficient condition for $\nu$ to the highest weight (with respect to the standard Borel subalgebra) of an irreducible polynomial representation is that $\nu^{-}=(\nu_1,\ldots,\nu_k)$ and $\nu^+=(\nu_{k+1},\ldots,\nu_{n})$ are both partitions, and in addition $(\nu^{-},(\nu^{+})^t)$ is a $(k|n-k)$-hook partition.

Let $L'(\nu)$ be a polynomial module of $\gl(k|n-k)$. Then we can visualize the corresponding hook partition diagrammatically as follows:
\begin{center}
\hskip 0cm \setlength{\unitlength}{0.25in}
\begin{picture}(7.5,6.5)
\put(0,0){\line(1,0){1}} \put(1,0){\line(0,1){2}}
\put(1,2){\line(1,0){1}} \put(2,2){\line(0,1){1}}
\put(2,3){\line(1,0){1}} \put(3,3){\line(0,1){1}}
\put(3,4){\line(1,0){1}} \put(4,4){\line(0,1){1}}
\put(4,5){\line(1,0){3}} \put(7,5){\line(0,1){1}}
\multiput(-.4,3)(0.4,0){13}{\line(1,0){0.2}}
\put(2.5,4.5){\makebox(0,0)[c]{\Large$\nu^-$}}
\put(0.6,2.3){\makebox(0,0)[c]{$(\nu^+)^t$}}
\put(7,6){\line(-1,0){7}} \put(0,6){\line(0,-1){6}}
\put(-.1,3){\line(1,0){0.2}} \put(-.7,3){\makebox(0,0)[c]{$k$}}
\put(4,6.1){\line(0,-1){0.2}} \put(4,6.5){\makebox(0,0)[c]{$n-k$}}
\put(4,3){\linethickness{1pt}\line(0,-1){3}}
\put(4,3){\linethickness{1pt}\line(1,0){3}}
\end{picture}
\end{center}
We can associate to the corresponding hook partition $\nu$ three partitions $M_\nu$, $r_\nu$, and $b_\nu=\nu^+$ as follows:
\begin{center}
\hskip 0cm \setlength{\unitlength}{0.25in}
\begin{picture}(7.5,6.5)
\put(0,0){\line(1,0){1}} \put(1,0){\line(0,1){2}}
\put(1,2){\line(1,0){1}} \put(2,2){\line(0,1){1}}
\put(2,3){\line(1,0){1}} \put(3,3){\line(0,1){1}}
\put(3,4){\line(1,0){1}} \put(4,4){\line(0,1){1}}
\put(4,5){\line(1,0){3}} \put(7,5){\line(0,1){1}}
\put(7,6){\line(-1,0){7}} \put(0,6){\line(0,-1){6}}
\multiput(-.4,3)(0.4,0){13}{\line(1,0){0.2}}
\multiput(4,1.5)(0,0.4){13}{\line(0,-1){0.2}}
\put(1.9,4.5){\makebox(0,0)[c]{\Large$M_\nu$}}
\put(5.6,5.4){\makebox(0,0)[c]{\Large$r_\nu$}}
\put(0.6,2){\makebox(0,0)[c]{$b_\nu^t$}}
\put(-.1,3){\line(1,0){0.2}} \put(-.7,3){\makebox(0,0)[c]{$k$}}
\put(4,6.1){\line(0,-1){0.2}} \put(4,6.5){\makebox(0,0)[c]{$n-k$}}
\put(4,3){\linethickness{1pt}\line(0,-1){3}}
\put(4,3){\linethickness{1pt}\line(1,0){3}}
\end{picture}
\end{center}

Let $x_i=e^{\delta_i}$, $i=1,\ldots,k$ and $y_j=e^{\delta_{k+j}}$, $j=1,\ldots, n-k$. We have the following Sergeev-Pragacz character formula for $L'(\nu)$:
\begin{align}\label{sergeev:pragacz}
\text{ch}L'(\nu)=\sum_{w\in\mf S_{k|n-k}} w\left( \frac{g_\nu(x,y)x^{r_\nu} y^{b_\nu}\prod_{i=1}^k x_i^{k-i}\prod_{j=1}^{n-k}y_j^{n-k-j}}{\Delta(x)\Delta(y)}\right),
\end{align}
where $g_\nu(x,y)=\prod_{(i,j)\in M_\nu}(x_i+y_j)$, $\Delta(x)=\prod_{i<j}(x_i-x_j)$, and $\Delta(y)=\prod_{p<q}(y_p-y_q)$. Here $x^{r_\nu}:=\prod_{i=1}^k x_i^{(r_\nu)_i}$ and $y^{b_\nu}:=\prod_{j=1}^{n-k} y_j^{(b_\nu)_j}$. (Also here we have used the identification between $\delta_i$s and $\delta'_j$ as explained above)

Let $C_\nu$ be the complement of $M_\nu$ in the $k\times (n-k)$ box, i.e., the Young diagram $\underbrace{(n-k,n-k,\ldots,n-k)}_k$.

\begin{lem}\label{SP:KL:iden}
Let $\la\in\La^+_{k,\zeta}$ such that $\la'$ is the highest weight of an irreducible polynomial module for $\gl(k|n-k)$. Then we have the following identity in $\h_\even^*$:
\begin{align*}
\sum_{w\in\mf S_{k|n-k}}(-1)^{\ell(w)}&w\left( \frac{x^{\la'^-} y^{{\la'^+}}\prod_{i=1}^k x_i^{k-i}\prod_{j=1}^{n-k}y_j^{n-k-j}}{\prod_{(i,j)\in C_{\la'}}1+x_i^{-1}y_j}\right) =\\
&\sum_{\mu\preceq\la}\ell_{\mu\la}(1)\sum_{w\in\mf S_{k|n-k}}(-1)^{\ell(w)}w\left(x^{\mu'^-} y^{{\mu'^+}}\prod_{i=1}^k x_i^{k-i}\prod_{j=1}^{n-k}y_j^{n-k-j}\right).
\end{align*}
\end{lem}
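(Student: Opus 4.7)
The plan is to recover both sides of the claimed identity from two distinct computations of $\text{ch}L'(\la')\cdot\Delta(x)\Delta(y)$: the Sergeev--Pragacz formula \eqref{sergeev:pragacz} for the left-hand side, and Brundan's Kazhdan--Lusztig theorem \cite[Theorem~4.37]{Br1} together with the standard PBW character formula for Kac modules for the right-hand side.

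Since $w(\Delta(x)\Delta(y))=(-1)^{\ell(w)}\Delta(x)\Delta(y)$ for $w\in\mf S_{k|n-k}$, formula \eqref{sergeev:pragacz} rearranges as
\begin{align*}
\text{ch}L'(\la')\cdot \Delta(x)\Delta(y) = \sum_w (-1)^{\ell(w)}\,w\Bigl(g_{\la'}(x,y)\,x^{r_{\la'}}y^{b_{\la'}}\prod_i x_i^{k-i}\prod_j y_j^{n-k-j}\Bigr).
\end{align*}
On the other hand, $\text{ch}K'(\mu')=\text{ch}L'^0(\mu')\prod_{\alpha\in\Phi'^+_\odd}(1+e^{-\alpha})$, and under the identifications $\delta_i\leftrightarrow\delta'_{-k-1+i}$ for $1\le i\le k$ and $\delta_j\leftrightarrow\delta'_{j-k}$ for $k+1\le j\le n$ the odd factor becomes $\prod_{i,j}(1+x_i^{-1}y_j)$, while $\text{ch}L'^0(\mu')\cdot\Delta(x)\Delta(y)$ is the Weyl numerator $\sum_w(-1)^{\ell(w)}w(x^{\mu'^-}y^{\mu'^+}\prod_i x_i^{k-i}\prod_j y_j^{n-k-j})$. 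Inserting this into $\text{ch}L'(\la')=\sum_{\mu\preceq\la}\ell_{\mu\la}(1)\text{ch}K'(\mu')$, multiplying through by $\Delta(x)\Delta(y)$, equating with the Sergeev--Pragacz side, and pulling the $\mf S_{k|n-k}$-invariant factor $\prod_{i,j}(1+x_i^{-1}y_j)$ inside the alternating sum on the Sergeev--Pragacz side, reduces the problem to the algebraic identity
\begin{align*}
\frac{g_{\la'}(x,y)\,x^{r_{\la'}}y^{b_{\la'}}}{\prod_{i,j}(1+x_i^{-1}y_j)} = \frac{x^{\la'^-}y^{\la'^+}}{\prod_{(i,j)\in C_{\la'}}(1+x_i^{-1}y_j)}.
\end{align*}

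Since $M_{\la'}$ is the complement of $C_{\la'}$ in the $k\times(n-k)$ box, the displayed identity is equivalent to $g_{\la'}(x,y)\,x^{r_{\la'}}y^{b_{\la'}}=x^{\la'^-}y^{\la'^+}\prod_{(i,j)\in M_{\la'}}(1+x_i^{-1}y_j)$, which I would verify by factoring $x_i+y_j=x_i(1+x_i^{-1}y_j)$ over each $(i,j)\in M_{\la'}$; writing $|M_{\la'}|_i=\min(\la'_i,n-k)$ for the number of boxes of $M_{\la'}$ in row $i$ and $(r_{\la'})_i=\max(\la'_i-(n-k),0)$, the relation $|M_{\la'}|_i+(r_{\la'})_i=(\la'^-)_i$ together with $y^{b_{\la'}}=y^{\la'^+}$ delivers the identity. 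The main obstacle is purely combinatorial bookkeeping: keeping the various index conventions consistent and checking that the even parts of the standard and parabolic Borel subalgebras of $\gl(k|n-k)$ coincide, so that the same $\Delta(x)\Delta(y)$ appears in both expressions; beyond that the computation is mechanical.
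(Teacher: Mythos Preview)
Your proposal is correct and follows essentially the same approach as the paper: both proofs equate the Sergeev--Pragacz expression for $\text{ch}L'(\la')$ with Brundan's Kazhdan--Lusztig expansion $\sum_{\mu\preceq\la}\ell_{\mu\la}(1)\text{ch}K'(\mu')$, use the PBW formula $\text{ch}K'(\mu')=\prod_{i,j}(1+x_i^{-1}y_j)\cdot\text{ch}L'^0(\mu')$, and then cancel the $\mf S_{k|n-k}$-invariant factor. The only cosmetic difference is that the paper pulls out $\prod_{i,j}(x_i+y_j)$ together with an overall $x^{-n+k}$, whereas you equivalently pull in $\prod_{i,j}(1+x_i^{-1}y_j)$ and verify the monomial identity $g_{\la'}(x,y)x^{r_{\la'}}y^{b_{\la'}}=x^{\la'^-}y^{\la'^+}\prod_{(i,j)\in M_{\la'}}(1+x_i^{-1}y_j)$ directly; the two computations are the same up to this rearrangement.
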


\begin{proof}
To simplify notation let us write $x^{\rho_x}:=\prod_{i=1}^k x_i^{k-i}$ and $y^{\rho_y}:=\prod_{j=1}^{n-k}y_j^{n-k-j}$. For an integer $l$ we write $x^l:=\prod_{i=1}^kx_i^l$ and similarly for $y^l$.

We have by \eqref{sergeev:pragacz}
\begin{align*}
\text{ch}L'(\la')=&\sum_{w\in \mf S_{k|n-k}} w\left( \frac{\prod_{(i,j)\in M_{\la'}}(x_i+y_j)x^{r_{\la'}}y^{b_{\la'}}x^{\rho_x}y^{\rho_y}}{\Delta(x)\Delta(y)} \right)\\
=&\sum_{w\in \mf S_{k|n-k}} (-1)^{\ell(w)}\frac{1}{\Delta(x)\Delta(y)}w\left( \frac{\prod_{i,j}(x_i+y_j)x^{r_{\la'}}y^{b_{\la'}}x^{\rho_x}y^{\rho_y}}{\prod_{(i,j)\in C_{\la'}}(x_i+y_j)} \right)\\
=&\sum_{w\in \mf S_{k|n-k}} (-1)^{\ell(w)}\frac{\prod_{i,j}(x_i+y_j)}{\Delta(x)\Delta(y)}w\left( \frac{x^{r_{\la'}}y^{b_{\la'}}x^{\rho_x}y^{\rho_y}}{\prod_{(i,j)\in C_{\la'}}(x_i+y_j)} \right)\\
=&\sum_{w\in \mf S_{k|n-k}} (-1)^{\ell(w)}\frac{\prod_{i,j}(x_i+y_j)}{\Delta(x)\Delta(y)}w\left( \frac{x^{r_{\la'}}y^{b_{\la'}}x^{\rho_x}y^{\rho_y}}{x^{C_{\la'}}\prod_{(i,j)\in C_{\la'}}(1+x^{-1}_iy_j)} \right) \\
=&\sum_{w\in \mf S_{k|n-k}} (-1)^{\ell(w)}\frac{\prod_{i,j}(x_i+y_j)}{\Delta(x)\Delta(y)}x^{-n+k}w\left( \frac{x^{{\la'^-}}y^{b_{\la'}}x^{\rho_x}y^{\rho_y}}{\prod_{(i,j)\in C_{\la'}}(1+x^{-1}_iy_j)} \right).
\end{align*}

Also by \cite[Theorem 4.37]{Br1} we have
\begin{align*}
\text{ch}L'(\la')=\sum_{ \mu\preceq\la}\ell_{\mu\la}(1)\prod_{i,j}(x_i+y_j)\frac{x^{-n+k}}{\Delta(x)\Delta(y)}\sum_{w\in \mf S_{k|n-k}}(-1)^{\ell(w)}\left( x^{\mu'^-}y^{\mu'^+}x^{\rho_x}y^{\rho_y} \right).
\end{align*}
Comparing these two expressions the lemma follows.
\end{proof}

\begin{thm} Let $\la\in\La^+_{k,\zeta}$ such that $\la'$ is the highest weight of an irreducible polynomial module for $\gl(k|n-k)$. Then we have
\begin{align*}
\text{ch}L(\la) = &\frac{2^{\lceil n/2\rceil}\prod_{i<j}(x_i+x_j)\prod_{p<q}(y_p+y_q)}{\prod_{i,j}(x_i-y_j)}
x^{\zeta+\frac{n+1}{2}-k}
y^{-\zeta-\frac{n-1}{2}+k}\\
&\quad\times\sum_{w\in\mf S_{k|n-k}} w\left(\frac{g_{\la'}(x,y)x^{r_{\la'}} y^{b_{\la'}}\prod_{i=1}^k x_i^{k-i}\prod_{j=1}^{n-k}y_j^{n-k-j}}{\Delta(x)\Delta(y)} \right).
\end{align*}
\end{thm}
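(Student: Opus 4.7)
The plan is to emulate the proof of Theorem~\ref{thm:KW:formula}, replacing Lemma~\ref{KW:KL:iden} by its Sergeev--Pragacz counterpart Lemma~\ref{SP:KL:iden}. Applying Theorem~\ref{PKLConjecture} together with the parabolic Verma character formula---derived from $K(\mu)\cong\mc{S}(\mf u^{\rl,-})\otimes L^0(\mu)$ as an $\mf l^\rl$-module, Proposition~\ref{IrrChForLevi} and the partition $\Phi^+=\Phi^+(\mf l^\rl)\sqcup\Phi^+(\mf u^\rl)$---one obtains
\begin{align*}
\text{ch}L(\la)=2^{\lceil n/2\rceil}\prod_{\alpha\in\Phi^+}\frac{1+e^{-\alpha}}{1-e^{-\alpha}}\sum_{\mu\preceq\la}\ell_{\mu\la}(1)\sum_{w\in\mf S_{k|n-k}}(-1)^{\ell(w)}w(e^{\mu}),
\end{align*}
exactly as at the corresponding step of the Kac--Wakimoto proof.

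Next, I would change coordinates via $\mu=\mu'+\rho+\zeta\,\textsf{1}_{k|n-k}$, using that $\textsf{1}_{k|n-k}$ and the residual shift $\rho-\rho_x-\rho_y$ both lie in the $\mf S_{k|n-k}$-fixed part of $\h_\even^*$ to extract them as a common prefactor $e^{\rho-\rho_x-\rho_y+\zeta\textsf{1}_{k|n-k}}$ of the alternating sum. Lemma~\ref{SP:KL:iden} then transforms the inner Kazhdan--Lusztig sum into
\begin{align*}
\sum_{w}(-1)^{\ell(w)}w\!\left(\frac{x^{\la'^-}y^{\la'^+}x^{\rho_x}y^{\rho_y}}{\prod_{(i,j)\in C_{\la'}}(1+x_i^{-1}y_j)}\right).
\end{align*}

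To arrive at the Sergeev--Pragacz form, I would use the complementation identity $\prod_{C_{\la'}}(1+x_i^{-1}y_j)\cdot\prod_{M_{\la'}}(1+x_i^{-1}y_j)=\prod_{i,l}(1+x_i^{-1}y_l)$ together with the $\mf S_{k|n-k}$-invariance of the rectangular product on the right to move the denominator outside the sum. This rectangular factor then cancels (up to the monomial $\prod_i x_i^{-(n-k)}$) against the $\prod_{i,l}(x_i+y_l)$ appearing in the expansion
\begin{align*}
\prod_{\alpha\in\Phi^+}\frac{1+e^{-\alpha}}{1-e^{-\alpha}}=\frac{\prod_{i<j}(x_i+x_j)\prod_{p<q}(y_p+y_q)\prod_{i,l}(x_i+y_l)}{\Delta(x)\Delta(y)\prod_{i,l}(x_i-y_l)}.
\end{align*}
The remaining $\prod_{M_{\la'}}(1+x_i^{-1}y_j)$ inside the sum combines with $x^{\la'^-}$ via the hook-decomposition identity $g_{\la'}(x,y)\cdot x^{r_{\la'}}=x^{\la'^-}\prod_{M_{\la'}}(1+x_i^{-1}y_j)$ (which follows from the fact that the $i$-th row length of $M_{\la'}$ plus the $i$-th row length of $r_{\la'}$ equals $\la'^-_i$) to produce the numerator $g_{\la'}x^{r_{\la'}}y^{b_{\la'}}x^{\rho_x}y^{\rho_y}$; a final application of $\sum_w(-1)^{\ell(w)}w(f)=\Delta(x)\Delta(y)\sum_w w(f/\Delta(x)\Delta(y))$ places the resulting expression in the Sergeev--Pragacz shape \eqref{sergeev:pragacz}.

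The principal technical obstacle is keeping track of the cumulative $x_i$- and $y_l$-exponents: the contributions from $e^{\rho-\rho_x-\rho_y+\zeta\textsf{1}_{k|n-k}}$, from the monomial $\prod_i x_i^{n-k}$ arising in the cancellation above, and from the hook-decomposition identity should together collapse to precisely the stated prefactor $x^{\zeta+\frac{n+1}{2}-k}y^{-\zeta-\frac{n-1}{2}+k}$. Each individual step is an elementary calculation, but the confluence of several shifts makes the exponent bookkeeping the main point requiring care.
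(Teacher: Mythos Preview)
Your proposal is correct and follows essentially the same route as the paper's own proof: both start from Theorem~\ref{PKLConjecture} and the parabolic Verma character, apply Lemma~\ref{SP:KL:iden}, and then undo the complementation $C_{\la'}\leftrightarrow M_{\la'}$ to recover the Sergeev--Pragacz numerator, with the paper packaging your shift $\rho-\rho_x-\rho_y+\zeta\,\textsf{1}_{k|n-k}$ as a single weight $\kappa$ (these coincide, and $x^{n-k}e^{\kappa}$ indeed yields the stated prefactor $x^{\zeta+\frac{n+1}{2}-k}y^{-\zeta-\frac{n-1}{2}+k}$).
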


\begin{proof}
We define
\begin{align*}
\kappa:=\sum_{i=1}^k(\zeta-\frac{n-1}{2})\delta_i +\sum_{j=k+1}^n(k-\frac{n-1}{2}-\zeta)\delta_j.
\end{align*}
so that we have $\la=\la'+\rho_x+\rho_y+\kappa$.
By Theorem \ref{PKLConjecture} and Lemma \ref{SP:KL:iden} we have the following expression for $\text{ch}L(\la)$:
\begin{align*}
&=2^{\lceil n/2\rceil}\sum_{\mu\preceq\la}\ell_{\mu\la}(1)\prod_{i,j}\frac{x_i+y_j}{x_i-y_j} \frac{\prod_{i<j,p<q}(x_i+x_j)(y_p+y_q)}{\Delta(x)\Delta(y)}\\
&\qquad\qquad\qquad\qquad \qquad\qquad\times e^{\kappa}\sum_{w}(-1)^{\ell(w)}w\left(x^{\mu'^-}y^{\mu'^+}x^{\rho_x}y^{\rho_y}\right)\\
&=2^{\lceil n/2\rceil}e^{\kappa}\prod_{i,j}\frac{x_i+y_j}{x_i-y_j} \frac{\prod_{i<j,p<q}(x_i+x_j)(y_p+y_q)}{\Delta(x)\Delta(y)}\\
&\qquad\qquad\qquad\qquad \qquad\qquad\times \sum_{w}(-1)^{\ell(w)}w\left( \frac{x^{\la'^-} y^{{\la'^+}}x^{\rho_x}y^{\rho_y}}{\prod_{(i,j)\in C_{\la'}}1+x_i^{-1}y_j}\right)\\
&=2^{\lceil n/2\rceil}e^{\kappa} \frac{\prod_{i<j,p<q}(x_i+x_j)(y_p+y_q)}{\prod_{i,j}(x_i-y_j)}\sum_{w}w\left( \frac{\prod_{i,j}(x_i+y_j)x^{\la'^-} y^{{\la'^+}}x^{\rho_x}y^{\rho_y}x^{C_{\la'}}}{\Delta(x)\Delta(y)\prod_{(i,j)\in C_{\la'}}x_i+y_j}\right)\\
&=2^{\lceil n/2\rceil}x^{n-k}e^{\kappa} \frac{\prod_{i<j,p<q}(x_i+x_j)(y_p+y_q)}{\prod_{i,j}(x_i-y_j)}\\
&\qquad\qquad\qquad\qquad \qquad\qquad\times\sum_{w\in\mf S_{k|n-k}}
w\left( \frac{\prod_{(i,j)\in M_{\la'}}(x_i+y_j)x^{r_{\la'}} y^{{b_{\la'}}}x^{\rho_x}y^{\rho_y}}{\Delta(x)\Delta(y)}\right).
\end{align*}
Recalling the definitions of $\kappa$ and $g_{\la'}(x,y)$ gives the theorem.
\end{proof}

\begin{rem}
Consider the full subcategory of $\mc O_{n,\hf+\Z}$ consisting of objects with composition factors isomorphic to $L(\la)$ with $\la=\sum_{i=1}^n\la_i\delta_i\in\h_\even^*$ of the form $\la_i\in\hf\Z$ and $\la_{k+1}>\la_{k+2}>\cdots>\la_n>0>\la_1>\la_2>\cdots>\la_k$. According to \cite[Proposition 4.1 and Corollary 4.2]{CKW} the canonical basis on the corresponding subspace of the Fock space of type $C$ can be identified naturally with  canonical basis of type $A$. Now, a verbatim repetition of the arguments given above can be used to obtain an irreducible character formula for $L(\la)$ in analogy to Theorem \ref{PKLConjecture}. Here, we use $\hf$ for $\zeta$ in the expression \eqref{aux:fns} to define the corresponding Kazhdan-Lusztig polyomials $\ell_{\la\mu}(q)$. This establishes a parabolic version of a special case of the conjecture on the irreducible characters for the half-integer weights in \cite{CKW}.  Also, the formula for Kostant modules and analogues of polynomial modules in this section have analogues in this setting as well. We leave the details to the reader.

We expect that the characters of $L(\la)$ in the case when $\la$ satisfies the more general condition of $\la_j>0>\la_i$, for $i=1,\ldots,k$ and $j=k+1,\ldots,n$, and either $\la_l\in\hf\Z$ or $\la_l\in\Z$, for all $l$, are determind by canonical basis of type $A$ quantum groups. This is predicted by \cite{CKW} and one should be able to establish this following the approach in \cite{BLW}.
\end{rem}

\bigskip
\frenchspacing


\begin{thebibliography}{ABCD}



\bibitem[Br1]{Br1}
J.~Brundan, {\em Kazhdan-Lusztig polynomials and character formulae for the Lie superalgebra $\gl(m|n)$}, J.~Amer.~Math.~Soc.~{\bf 16} (2003), 185--231.

\bibitem[Br2]{Br2}
J.~Brundan, {\em Kazhdan-Lusztig polynomials and character formulae for the Lie superalgebra ${\mathfrak q}(n)$}, Adv. Math. {\bf 182} (2004), 28--77.

\bibitem[Br3]{Br3}
J.~Brundan, {\em Tilting modules for Lie superalgebras}, Comm. Algebra \textbf{32} (2004), 2251--2268.

\bibitem[BLW]{BLW}
J.~Brundan, I.~Losev and B.~Webster, {\em Tensor product categorifications and the super Kazhdan-Lusztig conjecture}, arXiv:1310.0349.

\bibitem[BS]{BS} J.~Brundan and C.~Stroppel, {\em Highest weight categories arising from Khovanov's diagram
algebra IV: the general linear supergroup}, J.~Eur.~Math.~Soc.~{\bf 14} (2012) 373--419.

\bibitem[Ch]{Ch} C.-W.~Chen, {\em Reduction method for representations of queer Lie superalgebras}, arXiv:1601.03924.

\bibitem[CHR]{CHR} M.~Chmutov, C.~Hoyt and S.~Reif,
{\em Kac-Wakimoto character formula for the general linear Lie superalgebra}, Algebra and Number Theory {\bf 9} (2015), 1419--1452.


\bibitem[CK]{CK} S.-J.~Cheng and J.-H.~Kwon,
{\em Finite-dimensional half-integer weight modules over queer Lie superalgebras}, Commun.~Math.~Phys (2016), (DOI) 10.1007/s00220-015-2544-0.


\bibitem[CKL]{CKL} S.-J.~Cheng, J.-H.~Kwon, and N. Lam, {\em A BGG-type resolution for
    tensor modules over general linear superalgebra}, Lett.~Math.~Phys.~{\bf 84} (2008),
    75--87.

\bibitem[CKW]{CKW} S.-J.~Cheng, J.-H.~Kwon and W.~Wang,
{\em Character formulae for queer Lie superalgebras and canonical bases of type $C$},  arXiv:1512.00116.

\bibitem[CL]{CL} B.~Cao and N.~Lam, {\em An inversion formula for some Fock spaces}, arXiv:1512.00577.

\bibitem[CLW]{CLW} S.-J.~Cheng, N.~Lam and W.~Wang,
{\em Brundan-Kazhdan-Lusztig conjecture for general linear Lie superalgebras}, Duke~Math.~J.~{\bf 110} (2015), 617--695.


\bibitem[CMW]{CMW}
S.-J. Cheng, V. Mazorchuk, and W. Wang, {\em Equivalence of blocks for the general linear Lie superalgebra}, Lett. Math. Phys. \textbf{103} (2013), 1313--1327.

\bibitem[CW1]{CW08}  S.-J. Cheng and W. Wang, {\em Brundan-Kazhdan-Lusztig and super duality conjectures},
Publ. Res. Inst. Math. Sci. {\bf 44} (2008), 1219–-1272.

\bibitem[CW2]{CW} S.-J.~Cheng and W.~Wang,
{\em Dualities and Representations of Lie Superalgebras}. Graduate
Studies in Mathematics {\bf 144}. American Mathematical Society,
Providence, RI, 2012.


\bibitem[FM]{FM}
A.~Frisk and V. Mazorchuk, {\em Regular Strongly Typical Blocks of $\mc O^{\mf q}$}, Commun.~Math.~Phys.~{\bf 291} (2009), 533-–542.

\bibitem[Fr]{Fr} A.~Frisk, Typical blocks of the category $\mc O$ for the queer Lie superalgebra, J. Algebra Appl. {\bf 6} (2007), no. 5, 731--778.

\bibitem[GG]{GG}
M.~Gorelik and D. Grantcharov, {\em $Q$-type Lie superalgebras}. In: M.~Gorelik and
P.~Papi (Eds.), Advances in Lie superalgebras,  67–-89, Springer INdAM Ser.~{\bf 7}, Springer, Cham, 2014.


\bibitem[Hum]{Hum08} {J. Humphreys, Representations of semisimple Lie algebras in the BGG category O, Graduate Studies in Mathematics, vol. {\bf 94}, American Mathematical Society, Providence, RI, 2008}.


\bibitem[Jim]{Jim} M. Jimbo, {\em A q-analogue of U(gl(N + 1)), Hecke algebra, and the Yang-Baxter equation}, Lett. Math. Phys. {\bf 11} (1986), 247-–252.

\bibitem[JMO]{JMO} N.~Jing, K.~Misra and M.~Okado, {\em $q$-Wedge Modules for Quantized Enveloping Algebra of Classical Type}, J.~Algebra {\bf 230} (2000), 518--539.


\bibitem[KMS]{KMS} M. Kashiwara, T. Miwa, and E. Stern, {\em Decomposition of q-deformed Fock spaces}, Selecta Math.
(N.S.) {\bf 1} (1995), 787–-805.

\bibitem[KW]{KW} V. G. Kac and M. Wakimoto, {\em Representations of affine superalgebras and
mock theta functions}, Transform. Groups {\bf 19} (2014), 383--455.

\bibitem[LSS]{LSS} D.~Leites, M.~Saveliev and V.~Serganova, {\em
Embedding of $\osp(N/2)$ and the associated non-linear supersymmetric
equations}. Group theoretical methods in physics, Vol.~I (Yurmala,
1985), 255--297, VNU Sci. Press, Utrecht, 1986.

\bibitem[Mac]{Mac} I.~G.~Macdonald,
{\em Symmetric functions and Hall polynomials}, Second Edition,
Oxford Mathematical Monographs. Oxford Science Publications. The
Clarendon Press, Oxford University Press, New York, 1995. 

\bibitem[Mar]{Mar14}
V. Mazorchuk, {\em Parabolic category O for classical Lie superalgebras}. In: M.~Gorelik and
P.~Papi (Eds.), Advances in Lie superalgebras,  149–-166, Springer INdAM Ser.~{\bf 7}, Springer, Cham, 2014.



\bibitem[Mu]{Mu} I.~Musson, {\em Lie superalgebras and enveloping algebras},  Graduate Studies in Mathematics, {\bf 131}. American Mathematical Society, Providence, RI, 2012.

\bibitem[Pe]{Pe}
I.~Penkov, {\em Characters of typical irreducible finite-dimensional ${\mf q}(n)$-modules}, Funct. Anal. App. {\bf 20} (1986), 30--37.

\bibitem[PS1]{PS1}
I.~Penkov and V.~Serganova, {\em Characters of Finite-Dimensional Irreducible $\mf{q}(n)$-Modules}, Lett. Math. Phys. {\bf 40} (1997), 147--158.

\bibitem[PS2]{PS2}
I.~Penkov and V.~Serganova, {\em Characters of irreducible $G$-modules and cohomology of $G/P$ for the supergroup $G=Q(N)$}, J. Math. Sci., {\bf 84} (1997), 1382--1412.

\bibitem[Ser]{Ser} V.~Serganova, {\em Kazhdan-Lusztig polynomials and character formula
    for the Lie superalgebra $\gl(m\vert n)$}, Selecta Math. (N.S.) {\bf 2} (1996),
    607--651.

\bibitem[Sv1]{Sv}
A.~Sergeev, {\em The centre of enveloping algebra for Lie superalgebra $Q(n,\C)$}, Lett.~Math.~Phys.~{\bf 7} (1983), 177--179.

\bibitem[Sv2]{Sv2}
A.~Sergeev, {\em The tensor algebra of the identity representation as a module over the Lie superalgebras $gl(n,m)$ and $Q(n)$}, Math. USSR Sbornik {\bf 51} (1985), 419--427.

\bibitem[So]{So} W.~Soergel, {\em Character formulas for tilting modules over Kac-Moody
algebras}, Represent. Theory (electronic) {\bf 2} (1998), 432--448.

\bibitem[SZ]{SZ1}
Y.~Su and R.B.~Zhang, {\em Character and dimension formulae for
general linear superalgebra}, Adv. Math.~{\bf 211} (2007) 1--33.



\end{thebibliography}
\end{document}